\newcommand{\N}{\mathcal{N}} %graph with at most _i intersections
\newcommand{\NS}{\mathcal{N}_1(S)} %set of curves (Non-Separating)
\newcommand{\C}{\mathcal{C}}
\newcommand{\qi}{\cong_{QI}}
\DeclareMathOperator{\Aut}{Aut}
\DeclareMathOperator{\Mod}{Mod}
\DeclareMathOperator{\diam}{diam}
\DeclareMathOperator{\link}{link}
\newtheorem{theorem}{Theorem}[section]
\newtheorem{lemma}[theorem]{Lemma}
\newtheorem{remark}[theorem]{Remark}
\newtheorem{definition}[theorem]{Definition}
\newtheorem{conjecture}[theorem]{Conjecture}
\newtheorem*{metaconjecture*}{Ivanov's metaconjecture}
\newtheorem{proposition}[theorem]{Proposition}
\newtheorem*{mainthm*}{Main Theorem}
\newtheorem{claim}{Case}
\begin{document}

\title{Automorphisms of the k-Curve Graph}

\author{Shuchi Agrawal, Tarik Aougab, Yassin Chandran, Marissa Loving, \\ J. Robert Oakley, Roberta Shapiro, and Yang Xiao}

%    Abstract is required.
 \begin{abstract}
Given a natural number $k$ and an orientable surface $S$ of finite type, define the \textit{k-curve graph} to be the graph with vertices corresponding to isotopy classes of essential simple closed curves on $S$ and with edges corresponding to pairs of such curves admitting representatives that intersect at most $k$ times. We prove that the automorphism group of the $k$-curve graph of a surface $S$ is isomorphic to the extended mapping class group for all $k$ satisfying $k \leq |\chi(S)|- 512$. We prove the same result for the so-called \textit{systolic complex}, a variant of the curve graph with many complete subgraphs coming from interesting collections of systoles with respect to a hyperbolic metric. This resolves a conjecture of Schmutz Schaller.
\end{abstract}

\maketitle

\section{Introduction}

Let $S$ be a connected, orientable surface of genus $g$ possibly with finitely many punctures $p$, and let $\Mod^{\pm}(S)$ denote the extended mapping class group. The \textit{curve complex}, $\C(S)$, is a flag simplicial complex whose vertices correspond to isotopy classes of essential simple closed curves and whose edges represent pairs of such classes that can be realized disjointly on $S$. A celebrated theorem of Ivanov \cite{Ivanov} identifies $\mbox{Aut}(\mathcal{C}(S))$ with $\mbox{Mod}^{\pm}(S)$ in all but finitely many cases. This result inspired a flurry of results in related contexts, where $\mbox{Mod}(S)$ acts by simplicial automorphisms on some graph whose vertices represent homotopy classes of curves and/or arcs \cite{BrendleMargalit, Disarlo, Irmak, IrmakKorkmaz, KorkmazPapa, Luo, Schaller} or finite collections of curves and arcs \cite{KorkmazPapa2, Margalit}, or subsurfaces \cite{BrendleMargalit, McCarthyPapa}. 

In many of these papers, the result is that the full automorphism group of the complex being considered is $\Mod^{\pm}(S)$, or at least virtually so in a finite number of sporadic cases, and the proofs all factor through Ivanov's original theorem by showing that any automorphism of a particular complex induces one of $\mathcal{C}(S)$. This led to Ivanov's metaconjecture, which is discussed further in Chapter 4: \emph{Fifteen problems about the mapping class group} of \cite[p.~77]{Farb}.

\begin{metaconjecture*} Any ``sufficiently rich" complex naturally associated to a surface should have $\Mod^{\pm}(S)$ as its group of automorphisms, and furthermore, a proof of this exists which factors through Ivanov's original theorem.  \end{metaconjecture*} 

The focus of this paper is to verify the metaconjecture for an infinite family of curve graphs whose edges represent bounded intersection. In particular, we will consider the following natural generalization of $\C(S)$. For any $k \in \mathbb{N}$, the $k$-\textit{curve graph} is defined to be the graph whose vertices are those of $\mathcal{C}(S)$ and whose edges represent homotopy classes of curves with geometric intersection number at most $k$. Our main result characterizes $\Aut(\mathcal{C}_k(S))$ when $|\chi(S)|$ is sufficiently large relative to $k$.

\begin{theorem} \label{k-curve} Suppose $|\chi(S)| \geq k+ 512$. Then the natural map 
\[ \mbox{Mod}^{\pm}(S) \rightarrow \mbox{Aut}(\mathcal{C}_k(S)) \]
is an isomorphism. 
\end{theorem}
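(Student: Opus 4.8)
The plan is to follow the standard template for verifying Ivanov's metaconjecture: reduce to Ivanov's theorem \cite{Ivanov} by proving that every automorphism of $\mathcal{C}_k(S)$ preserves the far more restrictive adjacency relation of the ordinary curve complex $\mathcal{C}(S)$. Injectivity of the natural map is the easy half. Since $|\chi(S)| \geq 512$ the surface is nowhere near the sporadic range, so $\Mod^{\pm}(S)$ acts faithfully on isotopy classes of essential simple closed curves --- which is exactly the vertex set common to $\mathcal{C}_k(S)$ and $\mathcal{C}(S)$ --- and a graph automorphism is determined by its action on vertices; hence the kernel is trivial. The content is surjectivity.

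Fix $\phi \in \Aut(\mathcal{C}_k(S))$; everything rests on the following. \textbf{Key Lemma (to be established).} There is a property $(\star)$ of unordered pairs of vertices, formulated purely in terms of the abstract graph $\mathcal{C}_k(S)$, such that for all curves $\alpha,\beta$ one has $i(\alpha,\beta)=0$ if and only if $\{\alpha,\beta\}$ has property $(\star)$. Granting this, the argument concludes as in the prior work discussed above: since $(\star)$ is manifestly invariant under $\Aut(\mathcal{C}_k(S))$, the bijection $\phi$ carries disjoint pairs to disjoint pairs and non-disjoint pairs to non-disjoint pairs, so $\phi$ is an automorphism of the $1$-skeleton of the flag complex $\mathcal{C}(S)$, i.e.\ an element of $\Aut(\mathcal{C}(S))$. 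By Ivanov's theorem --- applicable since $|\chi(S)|$ is far from the exceptional values --- there is $f \in \Mod^{\pm}(S)$ inducing $\phi$ on $\mathcal{C}(S)$; because $\mathcal{C}_k(S)$ and $\mathcal{C}(S)$ share a vertex set and $f$ agrees with $\phi$ on it, $f$ induces $\phi$ on $\mathcal{C}_k(S)$ as well, and $f$ is unique by the injectivity already noted.

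So all the work is in isolating $(\star)$ and proving the Key Lemma, and this is where the hypothesis $|\chi(S)| \geq k+512$ enters. The guiding heuristic is that the geometric intersection number of two curves is comparable, up to universal additive and multiplicative constants, to the drop in complexity of the complement $S \setminus (\alpha \cup \beta)$ relative to $S$ --- indeed $|\chi(S\setminus(\alpha\cup\beta))| = |\chi(S)| - i(\alpha,\beta)$ when one cuts along a transverse representative. Thus $i(\alpha,\beta)=0$ should be detectable as the statement that the ``residual'' part of the surface left uncut by the configuration one sees in $\mathcal{C}_k(S)$ is as large as possible: for instance, $\alpha,\beta$ disjoint exactly when $\{\alpha,\beta\}$ extends to a clique of $\mathcal{C}_k(S)$ realizing maximal residual complexity, a quantity one attempts to read off combinatorially from how cliques through $\{\alpha,\beta\}$ sit inside their links. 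Making this precise requires two kinds of input: (a) bounds on the size and structure of $k$-systems contained in a subsurface of given complexity, so that residual complexity can be pinned between clique sizes below and packing obstructions above; and (b) careful bookkeeping of how all the subsurface complexities that arise compare with $k$. The bound $|\chi(S)| \geq k+512$ is what makes (a) and (b) go through: it keeps the complement of every subsurface in play large enough to house the auxiliary curves and filling configurations each estimate consumes, and the explicit $512$ is simply the accumulated slack from those estimates.

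The main obstacle is exactly the Key Lemma. A priori $\mathcal{C}_k(S)$ records only the coarse predicate ``$i(\cdot,\cdot) \leq k$'', so recovering the single value $i=0$ forces one to exploit global configurations rather than local adjacency; and the genuinely delicate point is uniformity --- producing one property $(\star)$ that detects disjointness simultaneously for separating and non-separating curves and across every topological type of the pair. The very small cases are especially stubborn: a pair filling a one-holed torus or a four-holed sphere removes only a bounded amount of complexity and so cannot be separated from the disjoint case by a complement-size count alone, so these will need a more refined argument. Everything downstream of the Key Lemma --- the descent to $\mathcal{C}(S)$, the invocation of Ivanov, and the injectivity and uniqueness bookkeeping --- is routine.
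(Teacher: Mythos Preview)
Your high-level template matches the paper exactly: show that every $\phi\in\Aut(\mathcal{C}_k(S))$ preserves disjointness, descend to $\Aut(\mathcal{C}(S))$, and invoke Ivanov. Injectivity is indeed immediate for surfaces of this complexity. The gap is that you have not defined, let alone proved, the property~$(\star)$; your clique/``residual complexity'' heuristic is a plausible-sounding direction but is not the route the paper takes, and it is not clear it can be made to work. A clique in $\mathcal{C}_k(S)$ is an arbitrary $k$-system rather than a multicurve, so maximal clique size is not governed by $3g+p-3$ and does not read off Euler characteristic in any transparent way; extracting a numerical ``residual complexity'' from the abstract graph would require exactly the kind of structure theorem for $k$-systems that you gesture at in~(a) but do not supply.

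The paper's actual invariant is the \emph{link of an edge}: for an edge $(u,v)$ set $L(u,v)=\link(u)\cap\link(v)$, and record three graph-theoretic quantities --- whether $|L(u,v)|$ is finite, the diameter of $L(u,v)$, and whether $L(u,v)$ admits a \emph{shortcut set} (a finite set of vertices through which every diameter-realizing geodesic can be rerouted). A case analysis (Lemmas~\ref{fillwholesurface}--\ref{lemma:jnsep}, Propositions~\ref{diam3}--\ref{k jointly separating}) shows: a $0$-edge has either infinite-diameter link (jointly non-separating, or jointly separating with a pair-of-pants complement) or link of diameter exactly~$3$ with \emph{no} shortcut set; a non-zero edge has either finite link (when $u\cup v$ fills $S$) or link of diameter $\le 4$ which, when the diameter equals~$3$, \emph{does} admit a shortcut set. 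These three invariants are manifestly preserved by graph automorphisms, so $0$-edges go to $0$-edges. The hypothesis $|\chi(S)|\ge k+512$ enters through Lemma~\ref{multi-arcint} (a variant of a result from~\cite{Aougab2}), which guarantees that in the complement of the subsurface $F(u,v)$ filled by $u\cup v$ there is always a curve intersecting any prescribed curve at most $k$ times; this is what makes the diameter and shortcut-set computations go through in Lemma~\ref{k-edge finite} and Proposition~\ref{diam3}.
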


When $k = 1$, Theorem \ref{k-curve} holds without the restriction on the Euler characteristic of $S$. The proof has been omitted for the sake of clarity since it is nearly identical to the proof of Theorem \ref{non-separating}. Furthermore the lower bound of $k + 512$ on $|\chi(S)|$ is not sharp. See the appendix for details on how this bound is derived. 

Theorem \ref{k-curve} addresses Part (3) of Question $7.4$ of Margalit's collection of open problems \cite{Margalit}. It represents a first step towards resolving Ivanov's metaconjecture in the cases where edges do not represent disjointness. In addition, to the authors' knowledge it is only the third result in the literature which resolves Ivanov's conjecture for an infinite family of simplicial complexes. The first such result was the work of Brendle--Margalit for \textit{complexes of regions} \cite{BrendleMargalit} and the second is McLeay's extension of their work from closed surfaces to punctured surfaces (including those of genus $0$) \cite{McLeay}. 

In addition to results concerning simplicial automorphisms mentioned above, there are a number of theorems characterizing simplicial injections \cite{Aramayona, AramayonaLeininger, AramayonaKoberdaParlier, Irmak2, IrmakMcCarthy}, quasi-isometries \cite{RafiSchleimer},
and other types of structure-preserving maps of $\mathcal{C}(S)$ and related complexes. For example, in \cite{RafiSchleimer}, Rafi-Schleimer identify the group of quasi-isometries of $\mathcal{C}(S)$ with $\mbox{Mod}^{\pm}(S)$. We remark that even though $\mathcal{C}_{k}(S)$ is quasi-isometric to $\mathcal{C}(S)$, this result does not imply Theorem \ref{k-curve}. Indeed, a priori it is possible that an automorphism of $\mathcal{C}_{k}(S)$ moves every vertex a uniformly bounded distance, and would therefore be equivalent to the identity as a quasi-isometry.  

We also consider the following variant of the curve graph, which we denote $\mathcal{SC}(S)$. The vertices of this graph correspond to isotopy classes of essential curves which are either non-separating curves, or separating curves which bound a twice punctured disk on one side. The edges represent pairs of such curves that intersect minimally, that is at most once in the case that both vertices correspond to non-separating curves, and at most twice when at least one of those vertices is a separating curve. The notation $\mathcal{SC}(S)$ is due to Schmutz Schaller \cite{Schaller}, and stands for the \textit{systolic complex}, as interesting sets of systoles on a hyperbolic surface correspond to complete subgraphs of $\mathcal{SC}(S)$. However, Anderson--Parlier--Pettet \cite{APP} give examples of complete subgraphs of $\mathcal{SC}(S)$ which are not realizable as the set of systoles for any hyperbolic metric on $S$.

\begin{theorem} \label{Schaller} If $S$ is a closed surface with genus $g \geq 3$, then the natural map  
\[ \mbox{Mod}^{\pm}(S) \rightarrow \mbox{Aut}(\mathcal{SC}(S))  \]
is an isomorphism. If $g = 2$, then the above map is surjective with kernel $\mathbb{Z}/2\mathbb{Z}$ corresponding to the hyperelliptic involution. If $S$ is a surface of genus $g$ with $p > 0$ punctures and $\chi(S) < 0$, then the above map is an isomorphism for $(g,p) \neq (1,2), (1,3), (0,5)$.
\end{theorem}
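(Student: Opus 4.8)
The plan is to run the standard strategy behind Ivanov's metaconjecture: show that every $\phi \in \Aut(\mathcal{SC}(S))$ is induced by a mapping class by reconstructing, purely from the combinatorics of $\mathcal{SC}(S)$, enough of a complex whose rigidity is already known, and then invoke that rigidity. The map $\Mod^{\pm}(S) \to \Aut(\mathcal{SC}(S))$ is well defined since mapping classes preserve topological type and geometric intersection numbers. When $S$ is closed, $\mathcal{SC}(S)$ is literally the non-separating analogue $\NS$ of the $1$-curve graph, so the case $g \ge 3$ follows from the machinery behind Theorem \ref{non-separating}; the genuinely new content lies in the punctured case. There, the first step is to separate the two topological types of vertices: I would give a graph-theoretic characterization of the curves bounding a twice-punctured disk --- for instance via the structure of their links, or via the property of being joined by an edge to only a restricted family of other twice-punctured-disk curves while being disjoint from a large family of non-separating curves --- so that $\phi$ preserves topological type, restricts to an automorphism of the induced subgraph $\NS$, and permutes the twice-punctured-disk vertices (hence the punctures) compatibly.

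The heart of the argument is to recover the disjointness relation among non-separating curves from the relation ``$i(\cdot,\cdot) \le 1$'': one wants a criterion, expressible using only adjacency in $\mathcal{SC}(S)$, characterizing which adjacent pairs $\{a,b\}$ of non-separating curves satisfy $i(a,b)=0$, the point being to rule out $i(a,b)=1$. The two cases are distinguished by the local geometry of a filling pair. If $i(a,b)=1$ then $a \cup b$ fills a one-holed torus $T$, the curves lying in $T$ form a Farey graph and so pairwise intersect, whence the only common neighbors of $a$ and $b$ that meet $a \cup b$ are the finitely many ``Farey'' curves crossing each of $a,b$ at most once. If instead $i(a,b)=0$ then, once $|\chi(S)|$ is large, $a$ and $b$ admit common neighbors that meet both curves and can be arranged so as to violate that one-holed-torus constraint (e.g.\ taken pairwise disjoint). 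Packaging this into a statement of the form ``$a,b$ disjoint $\iff$ [combinatorial condition]'' upgrades $\phi$ to an automorphism of $\NS$ together with its disjointness relation, i.e.\ of the non-separating curve complex $\N(S)$; together with the consistent action on the twice-punctured-disk vertices this pins down enough structure that Ivanov-type rigidity (for $\C(S)$, equivalently for $\N(S)$, whose exceptional surfaces are exactly those excluded in the statement) supplies an $f \in \Mod^{\pm}(S)$ inducing $\phi$. This step is essentially a reprise of the proof of Theorem \ref{non-separating}, adapted to carry along the extra vertices.

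It remains to compute the kernel and dispose of the small cases. Any mapping class acting trivially on $\mathcal{SC}(S)$ fixes the isotopy class of every non-separating curve, and the only nontrivial mapping class with that property is the hyperelliptic involution of the closed genus-$2$ surface; hence the map is injective except for $g=2$, where it has kernel $\Z/2\Z$ --- matching the stated conclusion for $(g,p)=(2,0)$. Surjectivity holds whenever the rigidity input invoked above is available, namely for $g \ge 3$ and for $g \le 2$ with $(g,p) \notin \{(1,2),(1,3),(0,5),(2,0)\}$; the surfaces $(1,2),(1,3),(0,5)$ are precisely where that input fails (the relevant complex acquires extra automorphisms, or the disjointness criterion degenerates), and the theorem simply excludes them. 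I expect the real obstacle to be the disjointness criterion of the second paragraph: distilling the ``large, differently-structured common link'' heuristic into one clean statement valid uniformly across all the infinitely many surfaces covered, and --- in the punctured cases --- controlling the interplay between the $i \le 1$ edges among non-separating curves and the $i \le 2$ edges involving twice-punctured-disk curves, since these mixed adjacencies must also be reconstructed before one can conclude that $\phi$ comes from a homeomorphism.
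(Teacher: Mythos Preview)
Your overall architecture is right—reduce to rigidity of $\mathcal{N}(S)$ or $\mathcal{G}(S)$ by recovering disjointness—and you correctly flag the central difficulty as distinguishing $0$-edges from edges with $i(a,b)\in\{1,2\}$. But the criterion you sketch does not work. You claim that when $i(a,b)=1$, ``the only common neighbors of $a$ and $b$ that meet $a\cup b$ are the finitely many Farey curves''; this is false. There are infinitely many curves intersecting each of $a,b$ exactly once that are \emph{not} contained in the one-holed torus $T=N(a\cup b)$: they cross $T$ in one or two arcs and close up in $S\setminus T$ (the paper calls these $1$-curves and $2$-curves). So neither cardinality nor the existence of pairwise-disjoint families among common neighbors meeting $a\cup b$ separates the two cases, and your heuristic remains a heuristic—as you yourself concede at the end.

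The invariant the paper actually uses is the \emph{diameter} of the edge link $L(a,b)=\link(a)\cap\link(b)$. Sections~\ref{section:one-intersection}--\ref{section:systolic} compute it exactly: if $i(a,b)=1$, or $i(a,b)=2$ with one curve separating, then $\diam L(a,b)=4$; if $i(a,b)=0$ then $\diam L(a,b)$ is either at most $3$ or infinite, never $4$. The upper bound of $4$ comes from showing that every vertex of $L(a,b)$ lies within distance $2$ of the two ``Farey'' curves $\gamma^{\pm}\subset T$; the matching lower bound is a subsurface-projection argument producing two curves in $L(a,b)$ whose projections to $S\setminus T$ are so far apart in $\mathcal{AC}(S\setminus T)$ that any short $L$-path between them is forced through $\gamma^{\pm}$. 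The same invariant distinguishes vertex types for free: a vertex bounding a twice-punctured disk has every incident $0$-edge link of infinite diameter (Remark~\ref{separating curves}), while a non-separating vertex always sits on some $0$-edge whose link has diameter $3$. Once you have this single idea, both the disjointness criterion and the ``mixed adjacency'' worries you raise at the end dissolve simultaneously.
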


Theorem \ref{Schaller} represents an almost complete resolution to the conjecture of \cite{Schaller}; we remark that our techniques do not cover the cases $(g,p)= (1,1), (1,2),(1,3),(0,4),(0,5)$. Following the outline of Ivanov's metaconjecture, our proof strategy relies on showing that any automorphism of $\mathcal{SC}(S)$ induces one of $\C(S)$. This fails when $(g, p) = (1, 2)$, since Luo proved in \cite{Luo} that the curve complex of the twice punctured torus admits automorphisms that are not induced by homeomorphisms. In the cases $(g,p)=(1,1)$ and $(0,4)$, the systolic complex is isomorphic to the $1$-skeleton of the Farey tesselation of the hyperbolic plane, whose automorphism group is $PGL(2, \mathbb{Z})$ and so the theorem is known. Thus, the only remaining cases are $(g,p)= (1,2), (1,3)$ and $(0,5)$.

When $g \neq 0$, one can also consider the subgraph of $\mathcal{SC}(S)$ consisting only of non-separating curves. Note that in the event that $S$ is closed, this is the entirety of $\mathcal{SC}(S)$. We denote this graph by $\mathcal{N}_{1}(S)$ and give the following characterization of its automorphisms.

\begin{theorem} \label{non-separating}
Suppose that $g \geq 1$ and that $(g,p) \neq (1,2)$. Then the natural map 
\[ \mbox{Mod}^{\pm}(S) \rightarrow \mbox{Aut}(\mathcal{N}_1(S))\]
is an isomorphism for $(g,p) \neq (2,0)$ and a surjection with kernel $\mathbb{Z}/2\mathbb{Z}$ otherwise.
\end{theorem}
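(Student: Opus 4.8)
The plan is to follow the template of Ivanov's metaconjecture: given an automorphism $\phi$ of $\mathcal{N}_1(S)$, promote it to an automorphism of the curve complex $\C(S)$, and then quote Ivanov's theorem together with its standard low-genus refinements. Injectivity of the natural map is the routine direction. A mapping class $f$ lying in the kernel fixes the isotopy class of every non-separating simple closed curve; since such curves contain filling collections meeting the hypotheses of the Alexander method, $f$ must be trivial, with the single exception of the closed genus-two surface, where the hyperelliptic involution acts trivially on every isotopy class of curve and accounts for the stated $\mathbb{Z}/2\mathbb{Z}$. So the work is in surjectivity.

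The crux is to recover, from the abstract graph $\mathcal{N}_1(S)$ alone, the disjointness relation ``$i(a,b)=0$'' among adjacent vertices --- equivalently, to locate inside $\mathcal{N}_1(S)$ the $1$-skeleton of the non-separating curve complex $\mathcal{N}(S)$. I would approach this by analyzing the complete subgraphs of $\mathcal{N}_1(S)$ (collections of non-separating curves pairwise intersecting at most once) and, within them, giving a purely combinatorial characterization of the edges $\{a,b\}$ with $i(a,b)=1$. The topological input is that such a pair fills a one-holed torus --- a subsurface of Euler characteristic $-1$ --- whereas a disjoint pair fills nothing; consequently the two cases should be distinguished by which finite configurations of vertices can simultaneously be made almost-disjoint from $a$ and from $b$, and with what mutual adjacency pattern. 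Concretely, I expect to isolate a bounded local configuration (a fixed number of vertices with prescribed adjacencies and non-adjacencies to $a$, to $b$, and to each other) that is realizable around the edge $\{a,b\}$ in exactly one of the two cases: a change-of-coordinates principle produces the configuration where it should exist, and an Euler-characteristic count rules it out in the other case. This is precisely where the hypotheses $g\ge 1$ and $(g,p)\neq(1,2)$ (and the exclusion of the very smallest surfaces) are needed, and I expect this to be the main obstacle, both in pinning down the right configuration and in carrying out the complexity bookkeeping near the boundary of the allowed range.

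Once the disjointness relation has been recovered, $\phi$ restricts to an automorphism of $\mathcal{N}(S)$, and to finish in the spirit of the metaconjecture I would reconstruct the remaining vertices of $\C(S)$: a separating curve is determined by the collection of non-separating curves disjoint from it, organized according to the topological types of the complementary pieces, so $\phi$ extends canonically to an automorphism $\Phi$ of $\C(S)$ (this is the one place requiring care with boundary/puncture bookkeeping). Applying Ivanov's theorem \cite{Ivanov} --- which identifies $\Aut(\C(S))$ with $\Mod^{\pm}(S)$ for every $S$ in our range, the genuine failure at $(g,p)=(1,2)$, where Luo \cite{Luo} exhibits exotic automorphisms of $\C(S)$, being exactly why that case is excluded --- we conclude that $\Phi$, hence $\phi$, is induced by a homeomorphism $f$ of $S$. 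Since non-separating curves fill $S$, the mapping class of $f$ is the unique preimage of $\phi$ up to the kernel computed above, so the natural map is onto with precisely that kernel; the finitely many low-complexity surfaces on which the configuration argument degenerates are checked directly and are exactly the cases excluded from the statement.
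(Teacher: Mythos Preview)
Your architecture is right: distinguish $0$-edges from $1$-edges by a purely combinatorial invariant, restrict $\phi$ to an automorphism of a graph whose rigidity is already known, and quote that result. The injectivity/kernel paragraph is fine. The gap is in the middle step, which is the entire content of the theorem.

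The paper's invariant is the \emph{diameter of the edge link} $L(a,b)=\link(a)\cap\link(b)$. Sections~\ref{section:one-intersection} and~\ref{section:disjoint} show that this diameter is exactly $4$ when $i(a,b)=1$ (Lemmas~\ref{leq4} and~\ref{4}), and is never $4$ when $i(a,b)=0$: it is at most $3$ when $a\cup b$ separates $S$ into two pieces each containing a non-separating curve of $S$, and infinite otherwise (Lemma~\ref{proposition:diam-3} and Proposition~\ref{proposition:diam-inf}). The delicate direction is the lower bound $\diam L(a,b)\ge 4$ for $1$-edges: it is proved by subsurface projection to $S_2=S\setminus N(a\cup b)$ together with the infinite diameter of $\mathcal{AC}(S_2)$, and this is precisely where the exclusion $(g,p)\neq(1,2)$ enters (in that case $S_2$ is a $3$-holed sphere). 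Your proposal supplies no invariant; ``I expect to isolate a bounded local configuration\ldots an Euler-characteristic count rules it out in the other case'' is a hope, not an argument. It is not at all clear that a fixed finite adjacency pattern separates the two cases uniformly in $S$, and the paper's actual lower-bound argument is coarse-geometric (pseudo-Anosovs and projections), not a finite complexity count.

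Your endgame also differs from the paper's, though harmlessly. The paper does not extend to $\C(S)$ and invoke Ivanov; once edge types are preserved, $\phi$ restricts to an automorphism of Schaller's graph $\mathcal{G}(S)$ (same vertex set, edges only for $i=1$), and Schaller's Theorem~A gives $\Aut(\mathcal{G}(S))\cong\Mod^{\pm}(S)$ directly, with no need to reconstruct separating curves. Your proposed route via $\mathcal{N}(S)$ and then $\C(S)$ is viable (the paper remarks that Irmak's theorem on $\mathcal{N}(S)$ works for $g\ge 2$), but it adds a reconstruction step the paper avoids.
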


\subsection{Idea of the proofs.} In both Theorems \ref{Schaller} and \ref{k-curve}, one needs to show that an automorphism of either $\mathcal{SC}(S)$ or of $\mathcal{C}_k(S)$ preserves edges that represent disjointness. In what follows, we let $\text{link} ( \cdot )$ denote the link of a vertex, the subgraph induced by the set of vertices adjacent to a given vertex. Given a pair of curves $\alpha, \beta$ connected by an edge, we study the subgraph $L(\alpha, \beta) = \text{link}(\alpha) \cap \text{link}(\beta)$; we refer to such a subgraph as the \textit{link of an edge} or \textit{edge link}.

%%%%%%%%%%%%%%%%%%%%%%%%%%%%%

In particular, we prove that the diameter of an edge link distinguishes between edges corresponding to disjoint curves and edges corresponding to curves intersecting once. For larger values of $k$, the diameters may not be sufficient to pick out the edges respresenting disjoint pairs, so we need a more careful analysis of the types of geodesics that exist in each edge link. We show that, under the additional hypothesis that the surface is sufficiently large, an edge link representing a pair of non-disjoint curves always has finite diameter. Furthermore, there must always exist a finite number of vertices, which we call \textit{shortcut curves}, so that for any two vertices $u,v$ in the link whose edge link distance is maximal, there is a geodesic from $u$ to $v$ that passes through a shortcut curve. This additional geometric property distinguishes edges representing disjoint curves from all other types of edges.

Throughout this paper we will employ both combinatorial and coarse-geometric techniques; for example, we use the technology of \textit{subsurface projections} to compute exact diameters of the edge links. Given a pair of curves $\alpha$ and $\beta$ intersecting $k$ times, a standard surgery argument going back to Lickorish \cite{Lickorish} yields a curve $\delta$ which intersects $\alpha$ at most once and $\beta$ at most $k/2$ times. In the proof of Theorem \ref{k-curve}, as opposed to such a $\delta$ we have need of a curve $\delta'$ which is disjoint from $\alpha$ and which intersects $\beta$ at most $k/4$ times. For this, we use a variant of a proposition due to the second author, used to prove that curve graphs are uniformly hyperbolic~\cite{Aougab2}. 

\subsection{Outline of the paper.} Section \ref{prelim} contains a brief introduction to curves on surfaces, several relevant graphs of curves associated to surfaces, the notion of subsurface projections, and some relevant coarse geometry. In Section \ref{section:one-intersection} and Section \ref{section:disjoint}, we compute diameters of edge links in $\N_1(S)$ and use these to prove Theorem \ref{non-separating} at the end of Section \ref{section:disjoint}. Section \ref{section:systolic} provides a proof of Theorem \ref{Schaller}, resolving the conjecture from \cite{Schaller}. Lastly, in Section \ref{section:k-curve} we prove our main result, Theorem \ref{k-curve}. We also include an appendix which contains the sketch of several known results which are needed in the proof of Theorem \ref{k-curve}. Appendix \ref{appendix} also provides an explanation of the restriction on the Euler characteristic of the surfaces required by Theorem~\ref{k-curve}.

\subsection{Acknowledgements.} Much of this paper is based on work supported by the National Science Foundation under Grant No. DMS-1439786 while the authors were in residence at the Institute for Computational and Experimental Research in Mathematics (ICERM) in Providence, RI, during the Summer@ICERM program. We sincerely thank ICERM for its hospitality. In addition, Aougab was partially supported by NSF grants DMS-1502623 and DMS-1807319 and Loving was supported by the NSF Graduate Research Fellowship under Grant No. DGE 1144245. Thank you to Moira Chas and Jonah Gaster for many helpful conversations and Nick Bell, Maxime Fortier Bourque, Dan Margalit, Alan McLeay, Athanase Papadopoulos, Joe Scull, and Davide Spriano for their comments on a draft of this paper. We would also like to thank the anonymous referee whose careful reading and thorough comments greatly improved the exposition of this paper. 

\section{Preliminaries} 
\label{prelim} 

Throughout the paper, unless otherwise noted, $S$ is an orientable surface of finite type, possibly with punctures and/or boundary components.

\subsection{Curves and arcs.} A \textit{simple closed curve} on $S$ is a homotopy class of maps $S^{1} \rightarrow S$ admitting a representative that is an embedding. We will often abuse notation and identify a simple closed curve with an embedded representative, and further identify this embedded representative with its image in $S$. A simple closed curve is \textit{essential} if it is not homotopically trivial, and if it is not homotopic to a map whose image  bounds a once-punctured disk on one side.

\begin{figure}[htb!]
\centering
\def\svgwidth{4in}
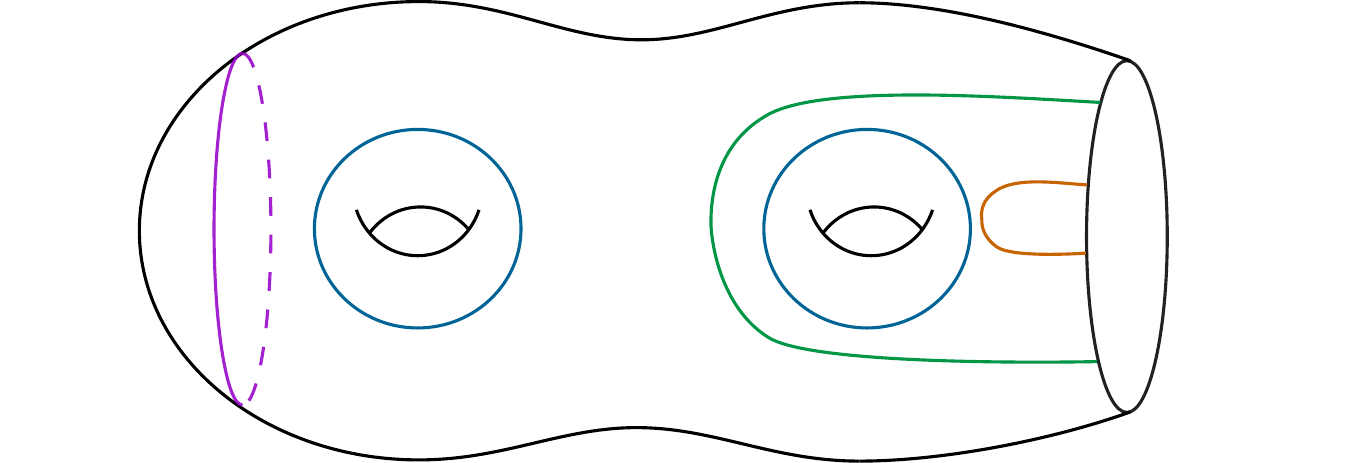
\caption{The boundary slide homotopy class of $\beta$ is an essential arc, whereas $\alpha$ is not since it co-bounds a disk with an arc of $\partial S$. The homotopy class of each $\gamma_i$ is essential and that of $\eta$ is not since $\eta$ bounds a disk on the surface. The collection $\{\gamma_1, \gamma_2\}$ is a multi-curve.}
\label{fig:essential-arcs-curves}
\end{figure}

Let $f, g: (0,1) \rightarrow S$ be two embeddings so that $\displaystyle \lim_{t \rightarrow 0,1} f(t)$ are either punctures or points on boundary components, and similarly for $g$. Then $f$ and $g$ are \textit{boundary-slide homotopic} if there is a homotopy $H:~[0,1] \times (0,1) \rightarrow S$ from $f$ to $g$ so that $\displaystyle \lim_{s \rightarrow 0}H(t,s)$ is either a puncture or on a fixed boundary component for all $t$, and similarly as $s \rightarrow 1$. Then an \textit{essential arc} will be a non-trivial boundary-slide homotopy class of such maps. 

\begin{figure}[htb!]
\centering
\def\svgwidth{2in}
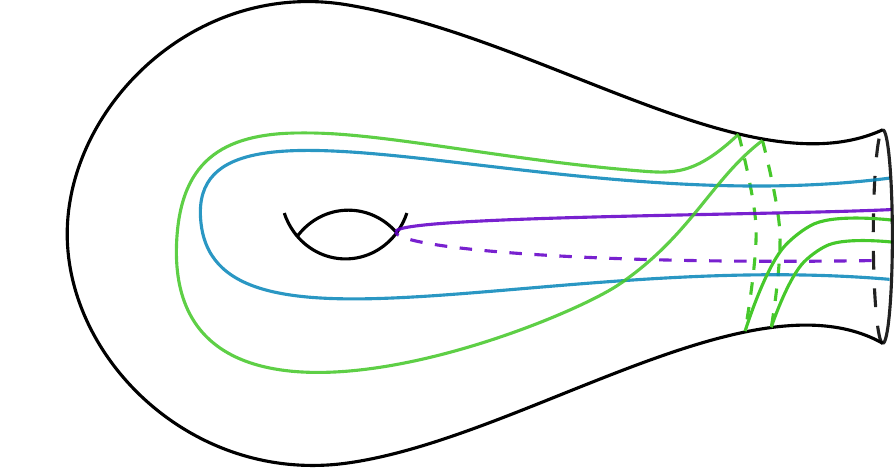
\caption{The green arc $\alpha$ and the blue arc $\beta$ are boundary-slide homotopic. Neither is boundary-slide homotopic to $\gamma$. The boundary-slide homotopy class of $\alpha$ and $\beta$, denoted $[\alpha]$, is an essential arc, as is the class of $[\gamma]$. Since $\gamma \cap \beta = \emptyset$ and $\beta \in [\alpha]$, then $\{[\alpha], [\gamma]\}$ is a multi-arc.}
\label{fig:boundary-slide}
\end{figure}

Given a pair of essential simple closed curves or arcs $\alpha, \beta$, their \textit{geometric intersection number} $i(\alpha, \beta)$ is the minimum of $|\alpha' \cap \beta'|$ taken over all representative images $\alpha' \subset S$ of $\alpha$ and $\beta' \subset S$ of $\beta$. Note that $\alpha$ can be a curve and $\beta$ an arc here. If $\alpha', \beta'$ realize the geometric intersection number for their respective homotopy classes, they are said to be in \textit{minimal position}.
	
A \textit{multi-curve} (resp. \textit{multi-arc}) is a collection of pairwise distinct essential simple closed curves (resp. arcs) whose pairwise geometric intersection numbers are all $0$. A collection of pairwise disjoint curves and arcs will, by convention, be referred to as a multi-curve. As is well known, for $(g,p) \neq (1,0)$, any multi-curve on $S$ consisting of curves contains at most $3g+p-3$ connected components and this bound is realizable. 

Lastly we introduce the notion of a \textit{weighted multi-arc}, which will be used in the proof of Proposition \ref{k jointly separating} and in Appendix \ref{appendix}. A \textit{weighted multi-arc} is a multi-arc with positive integer weights assigned to each arc. We use $|\alpha|$ to denote the number of arcs in a multi-arc $\alpha = \{a_1, a_2, \dots, a_n\}$, and $w(\alpha) = \displaystyle \sum_{i=1}^n w_i$ to denote the total weight, where $w_i$ is the weight assigned to arc $a_i \in \alpha$. 
	
\subsection{Relevant graphs and their automorphisms} 

In this section, we will introduce various graphs whose vertices will represent curves or arcs in $S$ and with edges corresponding to various constraints on the geometric intersection number. We call an edge connecting vertices $v$ and $w$ an \textit{$n$-edge} if the curves corresponding to $v$ and $w$ minimally intersect $n$ times. An edge is a \textit{non-zero} edge if $n\neq 0.$

Let $\mathcal{AC}(S)$, the \textit{arc and curve graph}, be the graph whose vertices correspond to essential simple closed curves and arcs on $S$, and whose edges correspond to disjoint pairs, so all edges are 0-edges. When $S$ is an annulus, $\mathcal{AC}(S)$ consists only of arcs connecting the two boundary components. We note that $\mathcal{AC}(S)$ is connected when $3g+n+b\geq 5$: Theorem~4.3 in \cite{primer} asserts that $\mathcal{C}(S)$ is connected, and any arc is disjoint from at least one curve.

Define $\mathcal{N}(S)$, the \textit{non-separating curve graph}, to be the graph whose vertices are non-separating simple closed curves with edges between classes that admit disjoint representatives. All edges in $\mathcal{N}(S)$ are 0-edges. It is known that $\mathcal{N}(S)$ is connected for genus $g \geq 2$ (see Theorem~4.4 in \cite{primer}). In adddition, $\mathcal N(S)$ is infinite diameter. The argument of Masur--Minsky \cite{MasurMinsky} for the infinite diameter of $\mathcal C(S)$ apply fairly directly to $\mathcal{N}(S)$. Irmak showed in \cite{Irmak} that for surfaces of with $g >1,$ $\Aut(\mathcal{N}(S)) \cong \Mod^\pm(S).$

The \textit{systolic complex}, denoted $\mathcal{SC}(S)$, is defined differently for closed and punctured surfaces. When $S$ is closed, $\mathcal{SC}(S)$ has vertices corresponding to isotopy classes of non-separating curves and whose edges represent pairs of such curves with geometric intersection number at most $1$. In this case, $\mathcal{SC}(S)$ contains both 0-edges and 1-edges. If $S$ is not closed, $\mathcal{SC}(S)$ has an additional vertex for each separating curve that bounds a twice punctured disk on one side; such vertices are connected to others by an edge when there are at most two intersections. We observe that for closed surfaces, $\mathcal{SC}(S)$ is equal to $\mathcal{N}_1(S).$

We define $\mathcal{N}_1(S)$, the \textit{non-separating 1-curve graph}, as the subgraph of $\mathcal{SC}(S)$ consisting only of non-separating curves. To the best of the authors' knowledge, this graph has not been otherwise named or studied extensively in the literature. Note that $\N(S)$ is a subgraph of $\N_1(S)$ with the same vertex set, and therefore $\N_1(S)$ is connected when $g \geq 2$ because $\N(S)$ is connected. When $g \geq 2$, $\N_1(S)$ is quasi-isometric to $\N(S)$ and therefore it is infinite diameter. See \ref{non-separating qi}.

Along with $\mathcal{SC}(S)$, Schmutz Schaller defined the graph $\mathcal{G}(S)$. When $g \geq 1$, $\mathcal{G}(S)$ has as its vertex set the collection of all non-separating curves and has edges corresponding to pairs of curves intersecting exactly once. This means that for closed surfaces of positive genus, $\mathcal{G}(S)$ is a subgraph of $\mathcal{SC}(S)$. When $g= 0$, $\mathcal{G}(S)$ has as its vertex set the collection of all curves bounding a twice punctured disk on one side and whose edges correspond to pairs of curves intersecting exactly twice. Again, this is a subgraph of $\mathcal{SC}(S)$. It is a result of Schaller from \cite{Schaller} that $\Aut(\mathcal{G}(S)) \cong \Mod^{\pm}(S).$ In the same paper, Schaller conjectured the following, which we resolve in Theorem \ref{Schaller} for all but $(g,p)= (1,3), (0,5)$.

\begin{conjecture}[Schmutz Schaller, \cite{Schaller}]
\label{SchallerC}
The automorphism group of $\mathcal{SC}(S)$ is isomorphic to $\Mod^\pm(S).$
\end{conjecture}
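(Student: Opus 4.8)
The plan is to follow the blueprint of Ivanov's metaconjecture: show that an arbitrary automorphism $\phi$ of $\mathcal{SC}(S)$ preserves the set of edges representing disjointness, so that $\phi$ induces an automorphism of a disjointness graph whose automorphism group is already known, and then transport the resulting mapping class back up to $\mathcal{SC}(S)$. Before that I would dispose of the easy direction: since mapping classes preserve geometric intersection numbers, the map $\Mod^{\pm}(S) \to \Aut(\mathcal{SC}(S))$ is well-defined, and its kernel consists of mapping classes fixing every non-separating curve (and, in the punctured case, every separating curve bounding a twice-punctured disk); such a mapping class is trivial except when $(g,p) = (2,0)$, where it is exactly the hyperelliptic involution. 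So the content is surjectivity.

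The heart of the argument is a link-of-an-edge analysis. Given an edge $\{\alpha,\beta\}$ of $\mathcal{SC}(S)$, consider the edge link $L(\alpha)\cap L(\beta)$, the subgraph induced by vertices adjacent to both $\alpha$ and $\beta$. When $i(\alpha,\beta)=0$ and $S\smallsetminus(\alpha\cup\beta)$ is complicated enough, this edge link contains an entire subsurface's worth of non-separating curves and has infinite diameter (it coarsely surjects onto a non-separating curve graph of the complementary subsurface). When $i(\alpha,\beta)=1$ — or $i(\alpha,\beta)=2$ for the special separating vertices bounding a twice-punctured disk — a Lickorish-style surgery argument produces, from any curve $\gamma$ disjoint from both $\alpha$ and $\beta$, a curve within bounded distance lying in a fixed finite set, so the edge link has finite (and controllable) diameter. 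Thus the finiteness/infiniteness, and when necessary the exact value, of $\diam(L(\alpha)\cap L(\beta))$ detects whether $i(\alpha,\beta)=0$; I would make the diameter estimates precise using subsurface projections, exactly as sketched in the introduction. A subtlety is that the vertex set of $\mathcal{SC}(S)$ in the punctured case is of mixed type (non-separating curves together with separating curves bounding a twice-punctured disk), so I must also show $\phi$ respects this partition — e.g. via a link-theoretic invariant distinguishing the two kinds of vertex — or at least that $\phi$ restricts to an automorphism of $\mathcal{N}_1(S)$ and hence, once disjointness edges are recognized, of the non-separating curve graph $\mathcal{N}(S)$.

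With disjointness edges in hand, $\phi$ restricts to an automorphism of $\mathcal{N}(S)$, and Irmak's theorem ($\Aut(\mathcal{N}(S))\cong \Mod^{\pm}(S)$ for $g>1$) yields a mapping class $f$ inducing $\phi$ on non-separating curves. One then checks that $f$ also induces $\phi$ on the separating twice-punctured-disk vertices: each such curve is determined by the unordered pair of punctures it encloses, and that pair is recovered from the curve's adjacency pattern to non-separating curves, so $f$ and $\phi$ agree there as well. For $(g,p)=(2,0)$ the ambiguity is precisely the hyperelliptic involution, giving the stated $\mathbb{Z}/2\mathbb{Z}$ kernel. The small-complexity cases split off: $(1,1)$ and $(0,4)$ give the Farey graph with $\Aut = PGL(2,\mathbb{Z})$ (so the result is classical), while $(1,2),(1,3),(0,5)$ are precisely where the edge-link invariants degenerate (and, for $(1,2)$, Luo's counterexample shows the metaconjecture approach genuinely fails), hence are excluded.

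The main obstacle is the second step: obtaining the edge-link diameter estimates sharply and \emph{uniformly} enough that finite diameters for intersecting pairs are always distinguishable from the infinite diameters of disjoint pairs, for every non-excluded $(g,p)$ simultaneously — this is where the subsurface-projection machinery and a careful case analysis of the complementary subsurfaces $S\smallsetminus(\alpha\cup\beta)$ are needed, and where one must cleanly separate the behavior of non-separating vertices from that of the special separating vertices.
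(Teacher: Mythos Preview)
Your overall strategy matches the paper's: distinguish $0$-edges from intersecting edges via edge-link diameters, show the separating/non-separating vertex partition is preserved, reduce to a known automorphism result on a graph of non-separating curves, then extend back to the separating vertices. One clarification on the edge-link dichotomy: $0$-edges do \emph{not} always have infinite-diameter links --- jointly separating pairs whose complementary pieces both contain non-separating curves of $S$ give diameter at most $3$ (Lemma~\ref{proposition:diam-3}). The actual invariant the paper uses is that $0$-edges never have link diameter exactly $4$, whereas $1$- and $2$-edges always do (Lemmas~\ref{leq4}, \ref{4}, \ref{2-edge upper} and Proposition~\ref{2-edge}). Your phrase ``and when necessary the exact value'' suggests you anticipated this, but the write-up should make the criterion precise.

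There are two genuine gaps in coverage. First, Irmak's theorem on $\Aut(\mathcal{N}(S))$ requires $g \geq 2$, so your reduction leaves genus-$1$ surfaces with $p \geq 4$ unhandled. The paper instead reduces to $\Aut(\mathcal{N}_1(S))$ (Theorem~\ref{non-separating}), which is identified with $\Mod^{\pm}(S)$ via Schaller's Theorem~A on $\mathcal{G}(S)$; this works for all $g \geq 1$. Second, you do not address genus $0$ with $p \geq 6$: here there are no non-separating curves, so $\mathcal{N}(S)$ is empty and restricting to it is vacuous. The paper handles this by noting that once $0$-edges are preserved, $\phi$ induces an automorphism of the graph whose vertices are curves bounding a $3$-holed sphere and whose edges are disjointness, and then invokes Schaller's Theorem~22. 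Both gaps are patched by citing the appropriate results of Schaller rather than Irmak.
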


\begin{figure}[htb!]
\centering
\def\svgwidth{6in}
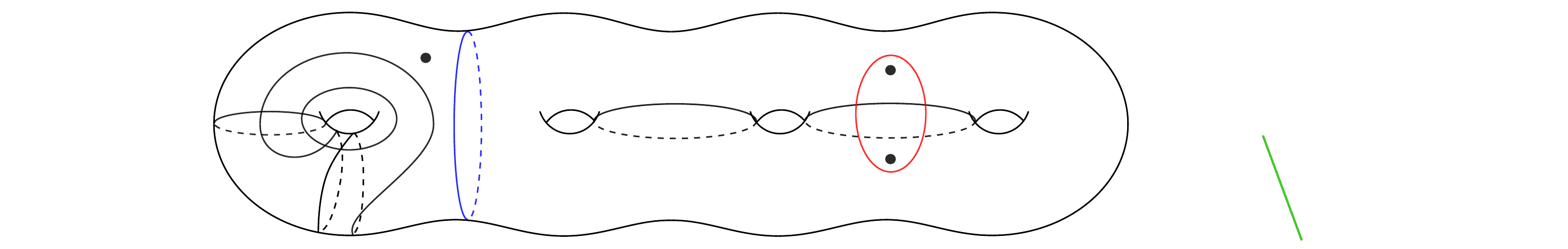
\caption{The black curves $\alpha, \beta, \varepsilon, \eta,$ and $\gamma$ are all vertices of $\mathcal N_1(S)$. The black edges in the graph record adjacencies in $\mathcal N_1(S)$. The curve $\rho$ is not in $\mathcal N_1(S)$, but it is in $\mathcal{SC}(S)$. The red edges represent adjacencies in $\mathcal{SC}(S)$. Note that the black edges are also in $\mathcal{SC}(S)$. The green edge represents adjacency in $\mathcal{G}(S)$. The curve $\omega$ is not a vertex in $\mathcal N_1(S), \mathcal{SC}(S)$, or $\mathcal G(S)$.} Note that not all the edges in this subgraph are shown to avoid cluttering the diagram.
\label{fig:section-2.2}
\end{figure}

For any one of the above mentioned graphs and for the curve complex $\mathcal{C}(S)$, we obtain a metric on the vertex set by identifying each edge with the unit interval and defining the distance between two vertices to be the minimum number of edges contained in any edge path between them. Given one of these graphs $G$, the distance function will be denoted by $d_{G}( , )$. All graphs mentioned above are infinite diameter; for all but finitely many surfaces this follows directly from work Masur--Minsky on the infinite diameter of $\mathcal C(S)$ \cite{MasurMinsky}. 

The following lemma establishes a quasi-isometry between $\mathcal{AC}_1(S)$ and $\mathcal{AC}(S)$, where $\mathcal{AC}_1(S)$ has the same vertex set as the standard arc and curve graph, but with edges when there is at most one intersection.

\begin{proposition}\label{qi}
If $S$ is a surface with punctures or with non-empty boundary and so that $(g,p) \neq (0,3)$, then \[ \mathcal{AC}_{1}(S) \qi \mathcal{AC}(S),\] where $\mathcal{AC} _1 (S)$ is the arc and curve graph of $S$ with both $0$-edges and $1$-edges. Moreover, both are infinite diameter. 
\end{proposition}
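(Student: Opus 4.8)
The plan is to show that the identity map on vertices is a quasi-isometry $\mathcal{AC}(S)\to\mathcal{AC}_1(S)$. These two graphs have the same vertex set, and every edge of $\mathcal{AC}(S)$ (a $0$-edge) is also an edge of $\mathcal{AC}_1(S)$, so the identity is $1$-Lipschitz from $\mathcal{AC}(S)$ to $\mathcal{AC}_1(S)$; that is, $d_{\mathcal{AC}_1(S)}(\alpha,\beta)\le d_{\mathcal{AC}(S)}(\alpha,\beta)$ for all vertices $\alpha,\beta$. It is also coarsely surjective, being the identity on the common vertex set. Thus everything reduces to producing a constant $C=C(S)$ such that $d_{\mathcal{AC}(S)}(\alpha,\beta)\le C$ whenever $i(\alpha,\beta)\le 1$: given such a $C$, any $\mathcal{AC}_1(S)$-geodesic from $\alpha$ to $\beta$ of length $n$ can be filled in edge by edge to a path in $\mathcal{AC}(S)$ of length at most $Cn$, so $d_{\mathcal{AC}(S)}(\alpha,\beta)\le C\, d_{\mathcal{AC}_1(S)}(\alpha,\beta)$ and the identity is a $(C,0)$-quasi-isometry.

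To obtain the constant $C$ I would argue that there are only finitely many $\Mod^{\pm}(S)$-orbits of pairs $(\alpha,\beta)$ of vertices with $i(\alpha,\beta)\le 1$. Indeed, by the change-of-coordinates principle such an orbit is determined by the homeomorphism type of a regular neighborhood $N(\alpha\cup\beta)$ --- which has uniformly bounded complexity, since $\alpha$ and $\beta$ are each a curve or an arc --- together with the finite combinatorial data recording, for each component of $\partial N(\alpha\cup\beta)$, whether it bounds a disk, is peripheral, or is an essential non-peripheral curve of $S$, and, in that last case, the homeomorphism type of the complementary piece. Since $\Mod^{\pm}(S)$ acts on $\mathcal{AC}(S)$ by isometries and $\mathcal{AC}(S)$ is connected for $(g,p)\ne(0,3)$, the quantity $d_{\mathcal{AC}(S)}(\alpha,\beta)$ then takes only finitely many values over all such pairs, and we may take $C$ to be the maximum. (Alternatively, and more concretely: if $i(\alpha,\beta)=1$ and some component of $\partial N(\alpha\cup\beta)$ is an essential non-peripheral curve, it is disjoint from both $\alpha$ and $\beta$, so $d_{\mathcal{AC}(S)}(\alpha,\beta)\le 2$; a short case analysis shows that the only way this can fail is for $S$ itself to be one of a handful of small surfaces --- a one-holed or once-punctured torus, a four-holed sphere, and the like --- which one then treats by hand.)

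For the final clause it suffices to recall that $\mathcal{AC}(S)$ has infinite diameter for every surface $S$ under consideration: for all but a few small surfaces this is standard, following from $\mathcal{AC}(S)$ being quasi-isometric to $\mathcal{C}(S)$ together with the fact that $\mathcal{C}(S)$ has infinite diameter \cite{MasurMinsky}, while for the remaining bordered or punctured surfaces (such as the once-punctured torus and the four-holed sphere) one uses the Farey-graph model of $\mathcal{C}(S)$, again of infinite diameter, together with a coarsely surjective map from $\mathcal{AC}(S)$ onto it. Having established $\mathcal{AC}_1(S)\qi\mathcal{AC}(S)$, infinite diameter then passes to $\mathcal{AC}_1(S)$ as well.

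The main obstacle I anticipate is exactly the uniform bound of the second paragraph: one must make sure that the exceptional subsurfaces that can arise as $N(\alpha\cup\beta)$ do not secretly produce unbounded $\mathcal{AC}(S)$-distance. In the orbit-counting approach this concern is absorbed into the two standard ingredients --- finiteness of the number of orbits and connectedness of $\mathcal{AC}(S)$ --- so the only genuinely delicate points are making the change-of-coordinates bookkeeping precise and verifying connectedness of $\mathcal{AC}(S)$ in the borderline low-complexity cases.
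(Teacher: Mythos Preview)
Your proposal is correct and shares the same overall architecture as the paper's proof: both use the identity map on vertices and reduce the question to bounding $d_{\mathcal{AC}(S)}(\alpha,\beta)$ uniformly over pairs with $i(\alpha,\beta)\le 1$. Where you diverge is in how that bound is obtained. The paper carries out an explicit case analysis (by $(g,p)$ and by whether $\alpha,\beta$ are curves or arcs) and finds an intermediate vertex or short path in each case, arriving at the concrete constant $C=3$. Your primary argument instead invokes finiteness of $\Mod^\pm(S)$-orbits of such pairs together with connectedness of $\mathcal{AC}(S)$ and the isometric action; this is cleaner and more robust, and avoids the low-complexity casework entirely, at the cost of being non-constructive. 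Your parenthetical alternative (look at $\partial N(\alpha\cup\beta)$) is essentially the paper's mechanism in the generic case.

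One point worth flagging: later in the paper the explicit value $C=3$ is actually used---Lemma~\ref{4} chooses arcs at $\mathcal{AC}(S_2)$-distance at least $23$ precisely so that, after multiplying by $3$, one forces an $L(u,v)$-path of length at least $5$. Your orbit-counting argument proves the proposition as stated, but if you were writing the whole paper you would either need to go back and extract an effective constant, or rephrase the later argument to accept an arbitrary $C$ (which is easy: just replace $23$ by $5C+1$). Your treatment of infinite diameter is essentially the same as the paper's.
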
 

\begin{proof}

Let $\phi: \mathcal{AC}_1(S)\longrightarrow \mathcal{AC}(S)$ be the identity map on the vertices. 

\medskip
For any vertices $u,v$ in $\mathcal{AC}_1(S))$, suppose $d_{\mathcal{AC}_1}(u,v) = l$ for some $l>0$. If $l = 1$ and $i(u,v) = 1$, either the genus $g$ is non-zero or at least one of $u,v$ is an arc. If $g>1$ or if the number of punctures $p>1$, there is an essential curve in $S\setminus (u \cup v)$, and thus $d_{\mathcal{AC}}(\phi(u), \phi(v)) = 2$. 

\medskip
If $(g,p)= (1,1)$ and $u,v$ are curves, then without loss of generality one is the $1/0$ and the other is the $0/1$ curve, and then there are disjoint arcs $\lambda_u, \lambda_v$ so that $i(\lambda_u, u)= i(\lambda_v,v)= 0$, and so $d_{\mathcal{AC}}(\phi(u), \phi(v)) \leq 3$. 

If one of $u,v$ is a curve and the other is an arc, there is an arc disjoint from both. These are shown in Figure~\ref{fig:qi1}. 

If both are arcs, Hatcher's original surgery argument for the contractibility of the arc complex (see the Main Theorem of \cite{Hatcher} or Theorem~5.5 of \cite{primer}) implies that the distance from $u$ to $v$ in the arc complex of $S$ is at most $2$. 

\begin{figure}[htb!]
\centering
\def\svgwidth{4.5in}
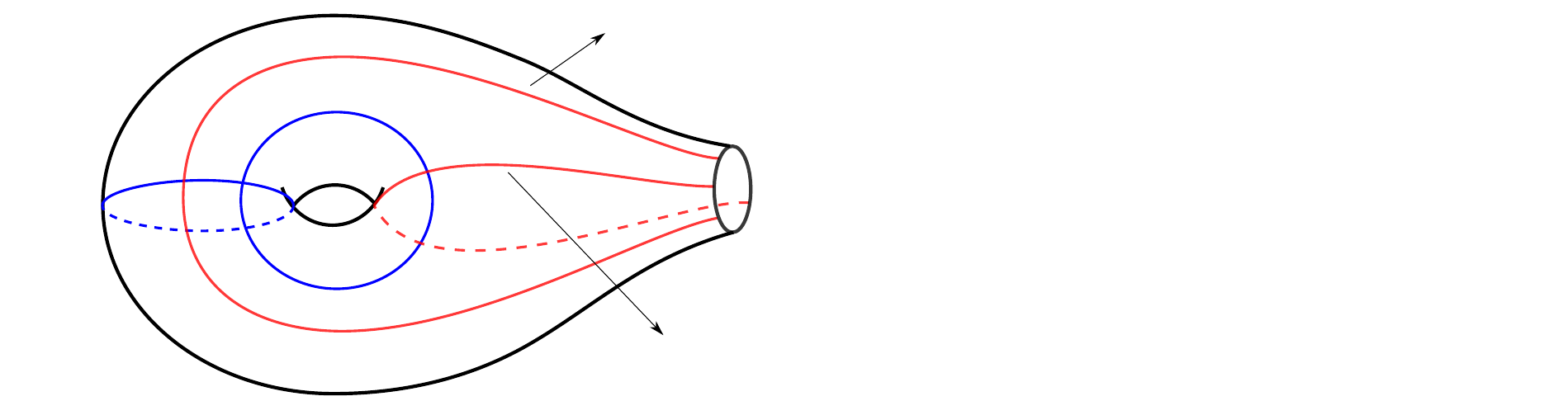
\caption{Two possible configurations when $(g,p) = (1, 1)$: on the left when $u$ and $v$ are curves and on the right when $u$ is a curve and $v$ is an arc.}
\label{fig:qi1}
\end{figure}

 \medskip
Finally, if $g= 0$, we will argue in such a way that the proof for $p= n$ implies a proof for $p >n$, and thus we can assume that $p=4$. 

If both $u$ and $v$ are arcs, at most one can be separating. In this case, it is easy to find an arc in the complement of $u$ and $v$: without loss of generality, $u$ separates two punctures from another, and $v$ only witnesses one of the two punctures on one side of $u$. Thus there is an arc $\lambda$ connecting the two punctures on one side of $u$, disjoint from $v$. If neither $u$ nor $v$ separate, cutting along one produces a $3$-holed sphere, in which the other arc becomes two arcs. Given any two disjoint arcs in a $3$-holed sphere, there is always a third essential arc disjoint from both, as shown in Figure~\ref{fig:qi2}). 

Lastly, if $u$ is an arc and $v$ a curve, $u$ can not be separating and so up to homeomorphism there is a unique configuration: $v$ separates two punctures from the other two, and $u$ connects one puncture on one side of $v$ to one on the other. It is then straightforward to find an arc disjoint from both. 
 
\begin{figure}[htb!] 
\centering
\def\svgwidth{3.5in}
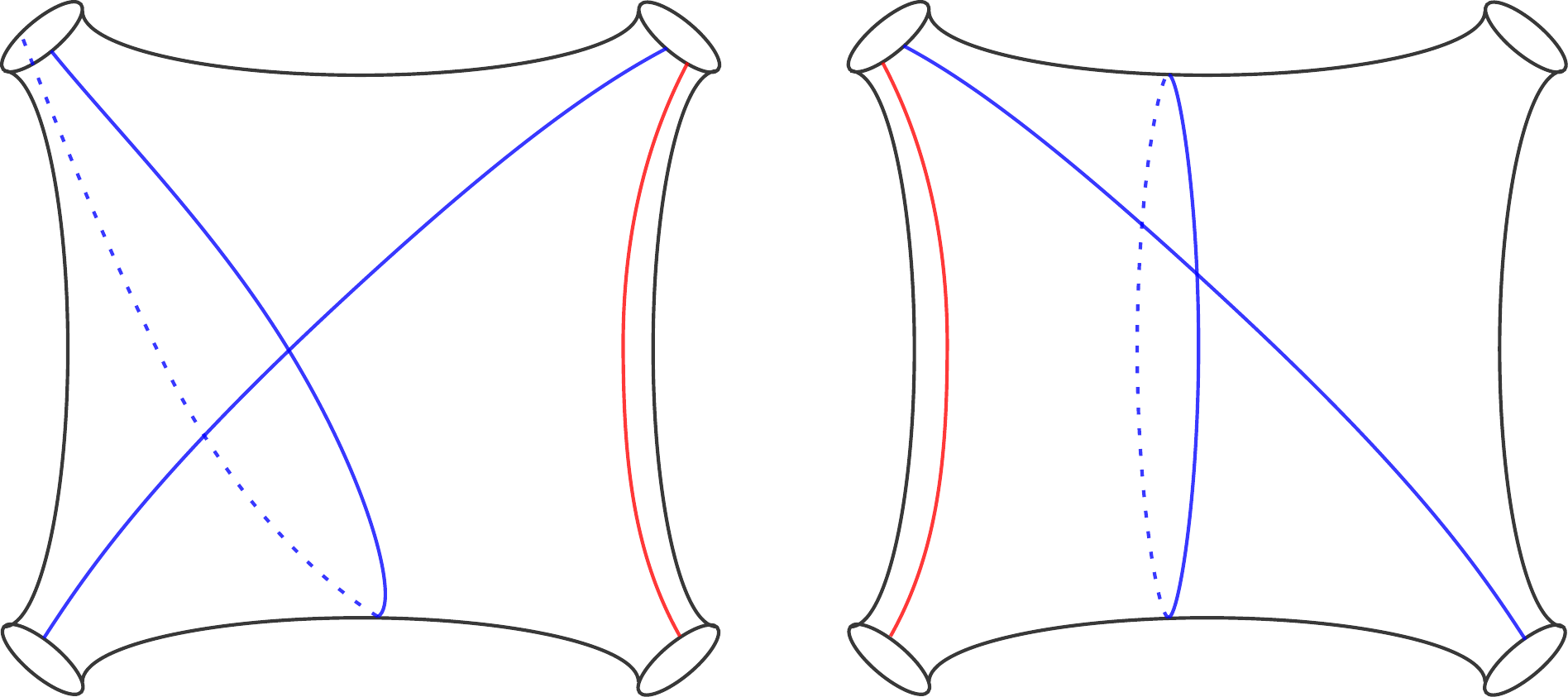
\caption{Two possible configurations when $(g,p) = (0, 4)$. On the left when $u$ and $v$ are both arcs and $u$ is separating. On the right when $u$ is an arc and $v$ is a curve.}
\label{fig:qi2}
\end{figure}
 
 If $l>1$ and if the shortest path between $u$ and $v$ contains no 1-edges, then $d_{\mathcal{AC}}(\phi(u),\phi(v))=l$ as well. If the shortest path contains a 1-edge $(\rho,\eta)$, then as above, we choose a path of length at most $3$ through $0$-edges from $\rho$ to $\eta$. Therefore $d_{\mathcal{AC}}(\phi(u),\phi(v)) \leq 3l$. 
 
 When $(g,p) \neq (1,1)$, that $\mathcal{AC}(S)$ has infinite diameter follows from the fact that $\mathcal{C}(S)$ has infinite diameter \cite{MasurMinsky} and from Theorem $1.3$ of \cite{KorkmazPapa} which states that $\mathcal{AC}(S) \qi \mathcal{C}(S)$. When $(g,p)=(1,1)$, the graph $\mathcal{AC}_1(S)$ is quasi-isometric to the Farey graph, which is infinite diameter. \end{proof}

\begin{remark} \label{loxodromic} 

Masur and Minsky \cite{MasurMinsky} not only show that $\mathcal{C}(S)$ is infinite diameter, but that the orbit of any pseudo-Anosov mapping class is as well. Since the quasi-isometries between $\mathcal{AC}(S), \mathcal{C}(S), $ and $\mathcal{AC}_1(S)$ are all $\mbox{Mod}(S)$-equivariant, the same is true for $\mathcal{AC}(S)$ and $\mathcal{AC}_1(S)$. 

\end{remark}

\begin{remark} \label{non-separating qi} 

When $g \geq 2$, a simplified version of the above argument demonstrates that $\N(S)$ and $\N_1(S)$ are quasi-isometric, via the identity map on the vertices. 
\end{remark}

We conclude this subsection with an argument, due originally to Hempel \cite{Hempel} (see also \cite{Schleimer}), which implies the connectedness of the curve complex. We reference the argument explicitly since it will be called upon in Section~\ref{section:k-curve}. Hempel's argument proves that given $\alpha, \beta$ simple closed curves on a surface $S$ representing vertices of $\mathcal{C}(S)$, 
\[ d_{\mathcal{C}(S)}(\alpha, \beta) \leq 2 \log_{2}(i(\alpha, \beta))+ 2. \]
 Hempel considers a simple surgery that replaces $\alpha$ with a curve $\alpha’$ intersecting $\beta$ at most half the number of times that $\alpha$ intersects $\beta$. The curve $\alpha’$ is obtained by following along $\alpha$ until the arrival at a chosen intersection with $\beta$, and then following along $\beta$ until the arrival at a subsequent intersection, at which point $\alpha’$ follows along $\alpha$ again until closing up.

\subsection{Subsurface projections.}
An \textit{essential subsurface} of $S$ is a pair $(Y, i_Y)$ where $Y$ is a surface (potentially with boundary), and $i_Y: Y \hookrightarrow S$ is a $\pi_{1}$-injective map and an embedding on the interior of $Y$, so that each component of $\partial Y$ maps to either an essential simple closed curve on $S$ or a component of $\partial S$. We will often identify an essential subsurface with its image in $S$. When $Y$ is an annulus we say that it is an \textit{annular} subsurface, and otherwise that $Y$ is \textit{non-annular}. An essential simple closed curve or arc is said to be in \textit{minimal position} with an essential subsurface $Y$ when it is in minimal position with all components of $\partial Y$. 

\begin{figure}[htb!]
\centering
\def\svgwidth{5in}
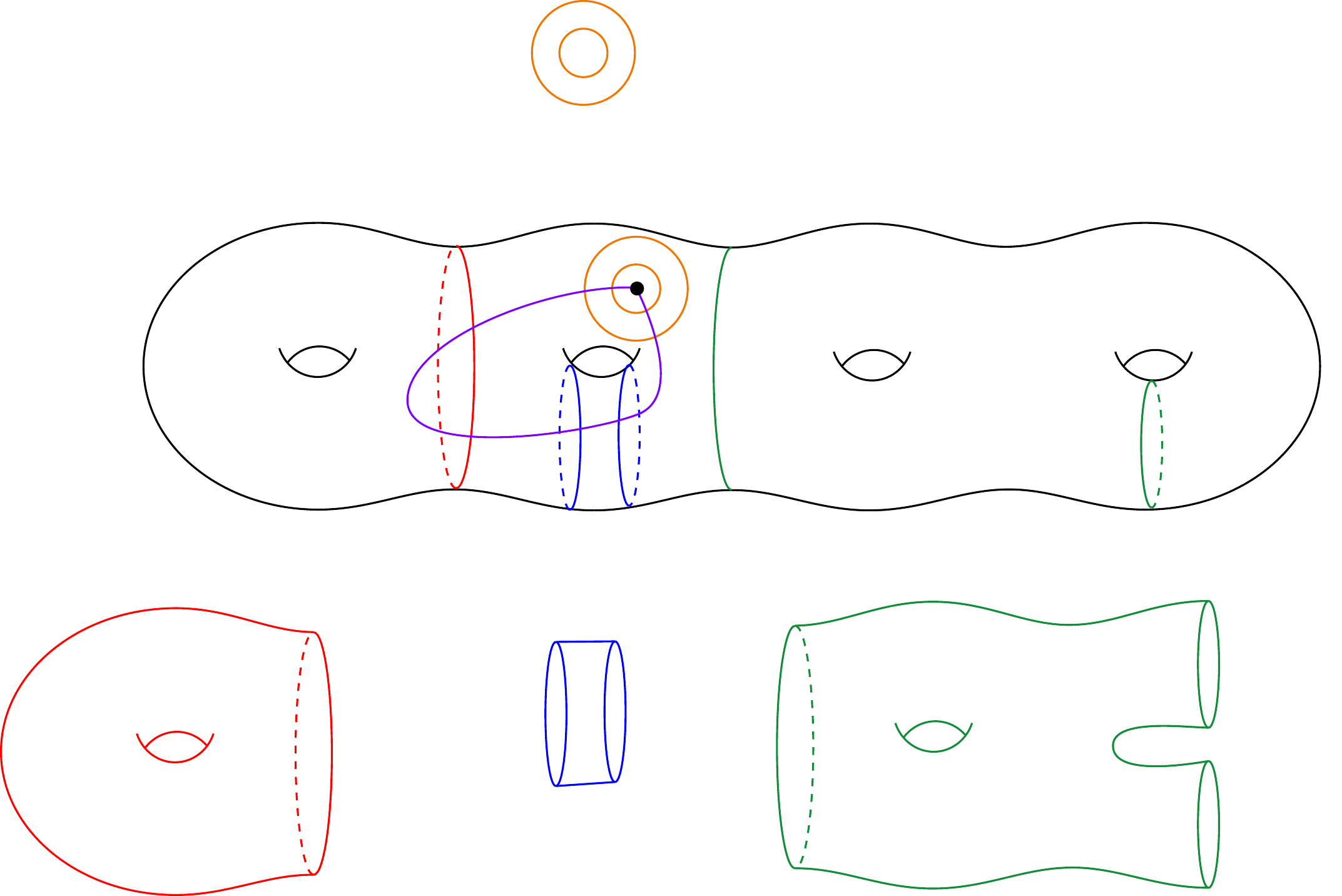
\caption{The pairs $(Y, i_Y)$ and $(W, i_W)$ are examples of essential non-annular subsurfaces, and $(Z, i_Z)$ is an essential annular subsurface. The arc $\alpha$ is in minimal position with $Z$ and $W$ but not with $Y$.}
\label{fig:subsurface-proj-1}
\end{figure}

Given $Y\subset S$ a non-annular essential subsurface, the \emph{subsurface projection} \[ \pi_Y : \mathcal{C}(S) \longrightarrow \mathcal{P}(\mathcal{AC}(Y))\]
takes a vertex $\alpha \in \mathcal{C}(S)$ to the multi-curve in $Y$ obtained by taking all distinct homotopy classes occurring in the intersection of $\alpha$ with $Y$, after $Y$ and $\alpha$ are put in minimal position (see \cite{MasurMinsky2} for more details).  

When $Y$ is an annulus, we first consider the cover $S_Y$ of $S$ corresponding to $\pi_1 Y$. This cover compactifies to an annulus, and we let $\pi_Y(\alpha)$ be  any lift of $\alpha$ to this annulus that connects its two boundary components (see Figure \ref{fig:subsurface-proj-2}).  
 
We define the $Y$-\textit{subsurface distance} as $d_Y(\alpha,\beta):=\mbox{diam}_{\mathcal{AC}(Y)}(\pi_Y(\alpha) \cup \pi_Y(\beta)).$

\begin{figure}[htb!]
\centering
\def\svgwidth{5in}
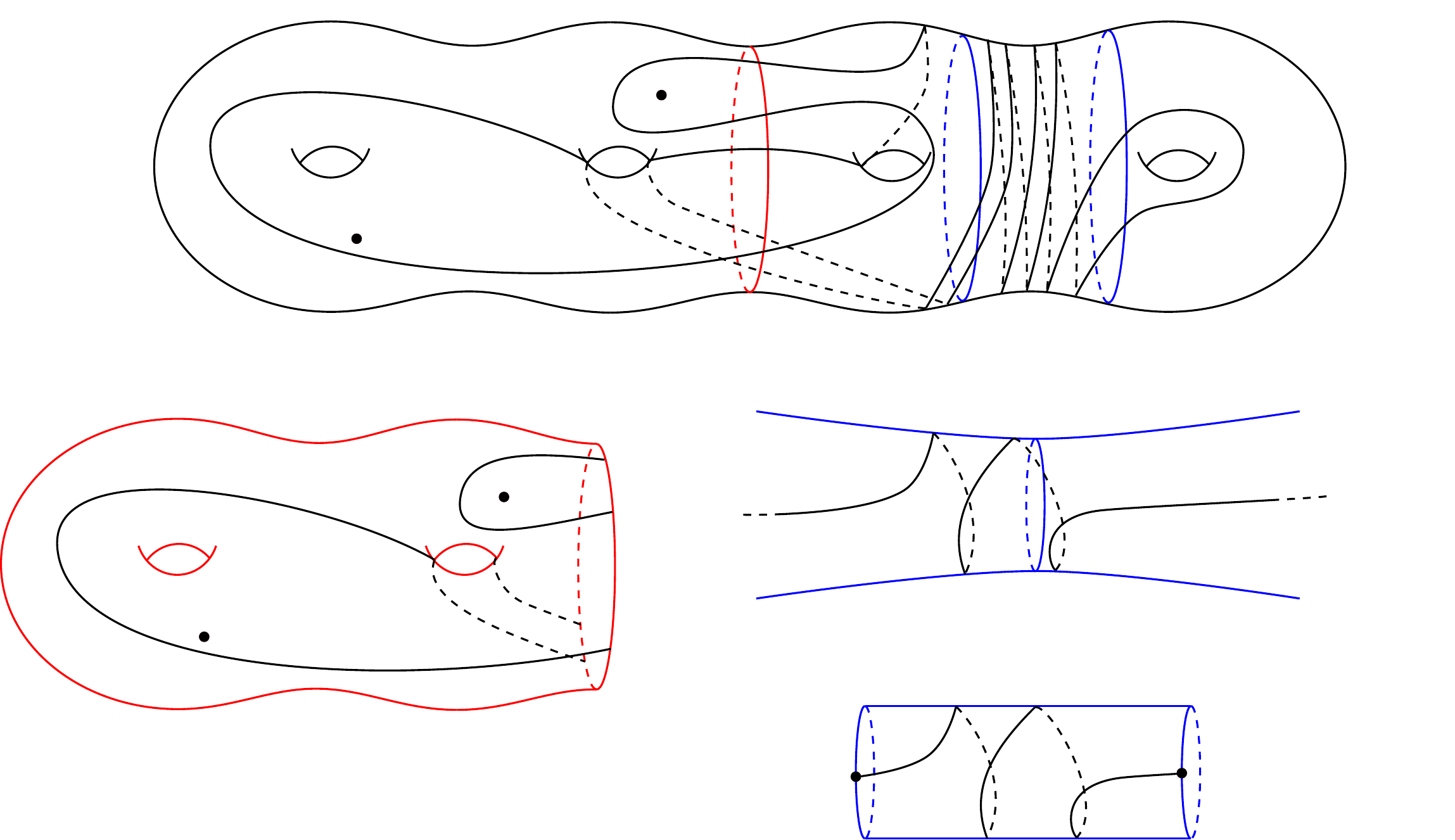
\caption{The subsurface projection of $\gamma$ to $Y$ consists of three pairwise disjoint essential arcs. The projection of $\gamma$ to $Z$ is obtained first by lifting $\gamma$, via the covering map $p_Z$, to the annular cover associated to $Z$, and then compactifying to obtain an essential arc.}
\label{fig:subsurface-proj-2}
\end{figure}

%\newpage 

\section{Curves that intersect exactly once}
\label{section:one-intersection}

Let $G$ be a graph and $v$ a vertex of $G$. From now on, we will use $V(G)$ to refer to the vertex set of $G$. We define the \textit{link of a vertex} $v$, $\link(v)$, to be the induced subgraph of $G$ containing the set of all vertices adjacent to $v$. Note that $v \notin \link(v).$

Let $u,v$ be adjacent vertices in a graph. We will denote the edge between $u$ and $v$ by $(u,v)$. Define the \textit{link of an edge} $(u,v)$ to be 
\[L(u, v) := \link(u) \cap \link(v).\]

In this section, we focus on the graph $\mathcal{N}_1(S)$, which, when $S$ is closed, agrees with $\mathcal{SC}(S)$. There are two types of edges in $\mathcal{N}_1(S)$: we call edges that connect vertices admitting disjoint representatives \textit{0-edges} and those that minimally intersect once \textit{1-edges}. In order prove that an automorphism of $\N_1(S)$ induces an automorphism of $\mathcal C(S)$ we will give a graph theoretic criterion to distinguish between $0$-edges and $1$-edges.

The arguments in this section will be used to prove Theorem \ref{Schaller} and will also be useful for the proof of Theorem \ref{non-separating}. In particular, the first step of the proof of Theorem \ref{non-separating} is showing the following.

\begin{proposition} \label{prop:diam-4}
The diameter of a link of an edge in $\mathcal N_1(S)$ equals $4$ if and only if $e$ is a $1$-edge. 
\end{proposition}

This section is devoted to proving one direction of this statement, which we record as Lemma \ref{lemma:diam-1-edge}. We will prove the other direction in Section \ref{section:disjoint}.

\begin{lemma} \label{lemma:diam-1-edge}
The diameter of the link of a $1$-edge in $\mathcal N_1(S)$ is $4$.
\end{lemma}

The proof of Lemma \ref{lemma:diam-1-edge} will break into two pieces, Lemmas \ref{leq4} and \ref{4}, which we will prove in Subsections \ref{sub:leq4} and \ref{sub:geq4}, respectively. We begin by establishing some helpful language and notation.
 
%There are two types of edges: we call edges that connect vertices admitting disjoint representatives \textit{0-edges} and the rest \textit{1-edges}. In order prove that an automorphism of $\N_1(S)$ induces an automorphism of $\mathcal C(S)$ we will give a graph theoretic criterion to distinguish between $0$-edges and $1$-edges. Namely, we will prove that the diameter of the link of an edge $e$ is $4$ if and only if $e$ is a $1$-edge.

%\subsection{Diameter of the link of a 1-edge.}

Let $u,v \in V(\NS)$ such that $(u,v)$ is a $1$-edge. Let $S_1 = N (u \cup v)$ be the regular neighborhood of $u \cup v \subset S$, which is homeomorphic to a torus with one boundary component. 

Recall that the set of isotopy classes of essential simple closed curves on a torus with one boundary component is in bijection with the set $\mathbb Q \cup \left\{ \frac{1}{0} \right\}$, which we will call \emph{slopes}. In particular, the meridian curve is associated with $0/1$ and the longitude is associated with $1/0$. Moreover, the boundary-slide isotopy classes of essential simple arcs on a torus with one boundary component are in one-to-one correspondence with the isotopy classes of essential simple closed curves on the torus. We thus may refer to curves or arcs on $S_1$ by their associated slopes, which are of the form $p/q$, where $p, q$ are coprime integers. Note that an essential simple closed curve or simple arc with slope $p/q$ intersects the meridian $|p|$ times and the longitude $|q|$ times in minimal position. 
  
Up to a change of coordinates if necessary, we may assume that $u$ and $v$ are the $0/1$ and $1/0$ curves, respectively, as shown by the green curves in Figure \ref{figure:one-two-component}. We will also denote the $1/1$ and $-1/1$ curves in $S_1$ by $\gamma^+$ and $\gamma^-$. Observe that if $\alpha \subset S$ is a simple closed curve with $i(\alpha,u), i(\alpha,v) \leq 1$, then $\alpha \cap S_1$ has at most two nontrivial components: each  component of $\alpha \cap S_1$ is an essential arc in $S_1$, and any essential arc in $S_1$ must intersect the $0/1$ curve or the $1/0$ curve at least once. 

By a \textit{1-curve}, we will mean any curve $\alpha$ so that $i(\alpha, u)= i(\alpha,v)=1$ and so that $\alpha \cap S_1$ has a single component. If $\alpha$ is a $1$-curve, then $i(\alpha, u) = i(\alpha, v) = 1$ and $\alpha \cap S_1$ must be a $1/1$ arc or $-1/1$ arc. One of these two possible configurations is shown on the top of Figure \ref{figure:one-two-component}. Similarly, a \textit{2-curve} is any curve intersecting each of $u$ and $v$ exactly once, and so that its intersection with $S_1$ has two components. An illustration of a $2$-curve is shown on the bottom of Figure \ref{figure:one-two-component}.

\begin{figure}[htb!]
\centering
\begin{minipage}{0.8\textwidth}
    \resizebox{\textwidth}{!}{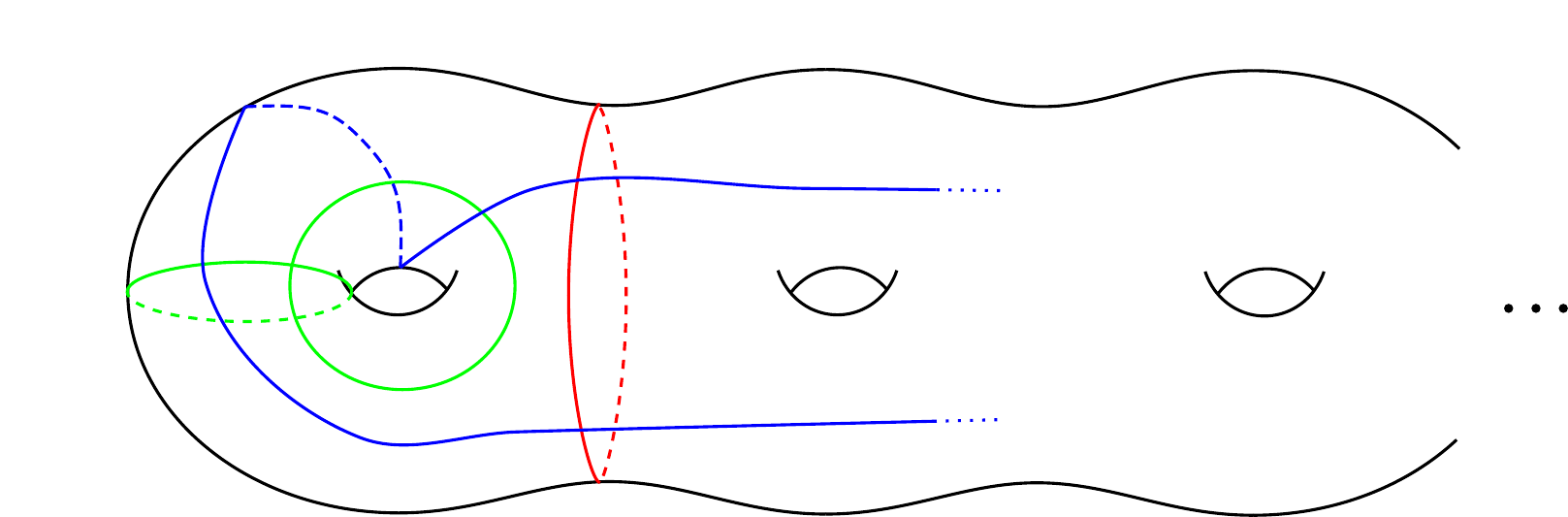}
\end{minipage}

\begin{minipage}{0.8\textwidth}
    \resizebox{\textwidth}{!}{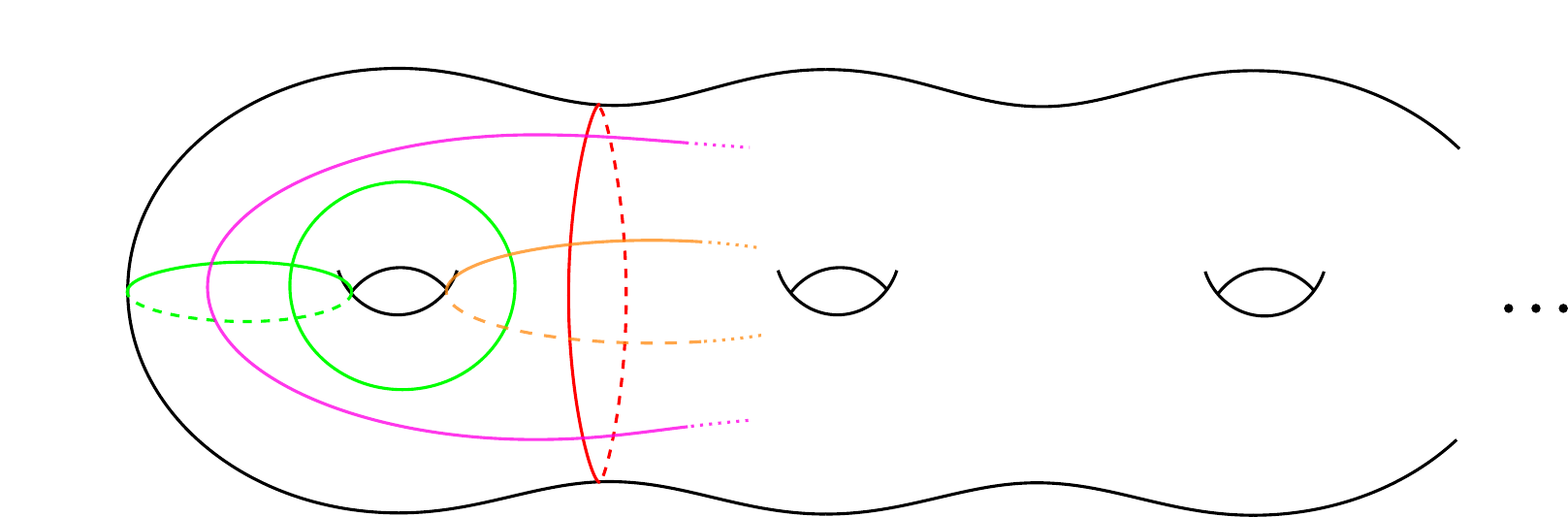}
\end{minipage}
\caption{On the top: a $1$-curve $\alpha$ in $S_1$. On the bottom: A $2$-curve $\alpha$ in $S_1.$}
\label{figure:one-two-component}
\end{figure}

\subsection{The link of a $1$-edge has diameter at most $4$} 
\label{sub:leq4}

We are now prepared to show that the link of a $1$-edge has diamater at most $4$.

\begin{lemma} \label{leq4}
If $(u,v)$ is a $1$-edge in $\N_1(S)$, then $\diam(L(u,v))\leq 4.$
\end{lemma}

\begin{proof} By assumption, $i (u,v) = 1$.  Let $S_2=\overline{S\setminus S_1}$. Up to a change of coordinates, we assume that $u$ and $v$ are the $0/1$ and $1/0$ curves in $S_1$, respectively.  

% \begin{figure}[htb!] 
% \centering
% \def\svgwidth{3in}
% \input{figure3.pdf_tex}
% \caption{$S_2=\overline{S\setminus S_1}.$}
% \label{figure:s1s2}
% \end{figure}

Let $\alpha \in L(u,v)$. Up to a homeomorphism of $S_1$ exchanging $u$ and $v$, the possible configurations of $\alpha$ relative to $u$ and $v$ are as follows:

\begin{enumerate} 
\item $i(u,\alpha)=i(v,\alpha)=1$
\item $i(u,\alpha) = i(v,\alpha)=0$
\item $i(u,\alpha)=0$ and $i(v,\alpha)=1.$
\end{enumerate}

We will refer to a curve $\alpha \in L(u,v)$ as being of either type (1), (2), or (3) depending on which of the above three holds. Recall that $\gamma^+$ and $\gamma^-$ are the $1/1$ and $-1/1$ curves on $S_1$, respectively.

\begin{claim} If $\alpha \in L(u,v)$ is type (1), then $d_L(\alpha, \{\gamma^+, \gamma^-\}) \leq 2$.  \end{claim}

\begin{proof} If $\alpha \subset S_1$, it must be either $\gamma^{+}$ or $\gamma^{-}$ and we are done.   

If $\alpha \not \subset S_1$, then $\alpha$ intersects $S_1$ in either one or two connected components, i.e., $\alpha$ is either a $1$- or $2$-curve. If $\alpha \not \subset S_1$ is a $1$-curve, then $\alpha \cap S_1$ must be either a $1/1$ or a $-1/1$ arc and therefore we have either $d_{L}(\alpha, \gamma^{+}) = 1$ or $d_{L}(\alpha, \gamma^{-}) = 1$. 

Otherwise, $\alpha \not \subset S_1$ is a $2$-curve and hence $\alpha$ has two connected components in $S_1$ and they can be arranged as in Figure \ref{figure:one-two-component}. Note that this implies $\alpha \cap S_2$ also has two disjoint connected components.

Observe that the endpoints of the two components of $\alpha\cap S_2$ are unlinked on $\partial S_2$, i.e, if one reads off the cyclic order of these endpoints on $\partial S_2$ clockwise or counter-clockwise, then the two endpoints of each component are adjacent to each other (see the two blue arcs in $S_2$ in Figure \ref{betaconstruction}).

We can now construct a curve $\beta$ as follows: starting at a point of intersection $\alpha \cap \partial S_1$, follow along one component of $\alpha \cap S_1$. Upon arriving back at $\partial S_1$, continue following along $\alpha$ through one component of $\alpha \cap S_2$. When arriving back at $\partial S_2$, the combinatorics of the points $\alpha \cap \partial S_2$ described in the previous paragraph implies that there is a choice of arc along $\partial S_2$ that ends where we began and so that the resulting closed curve is simple and intersects $\alpha$ exactly once. In particular, $\beta$ is non-separating; it also intersects each of $\gamma^+$ and $\gamma^-$ exactly once and thus $d_{L}(\alpha, \gamma^{+}), d_{L}(\alpha, \gamma^-) \leq 2$ (see Figure \ref{betaconstruction}).\end{proof}

\begin{figure}[htb!]
\centering
\begin{minipage}{0.4\textwidth}
    \resizebox{\textwidth}{!}{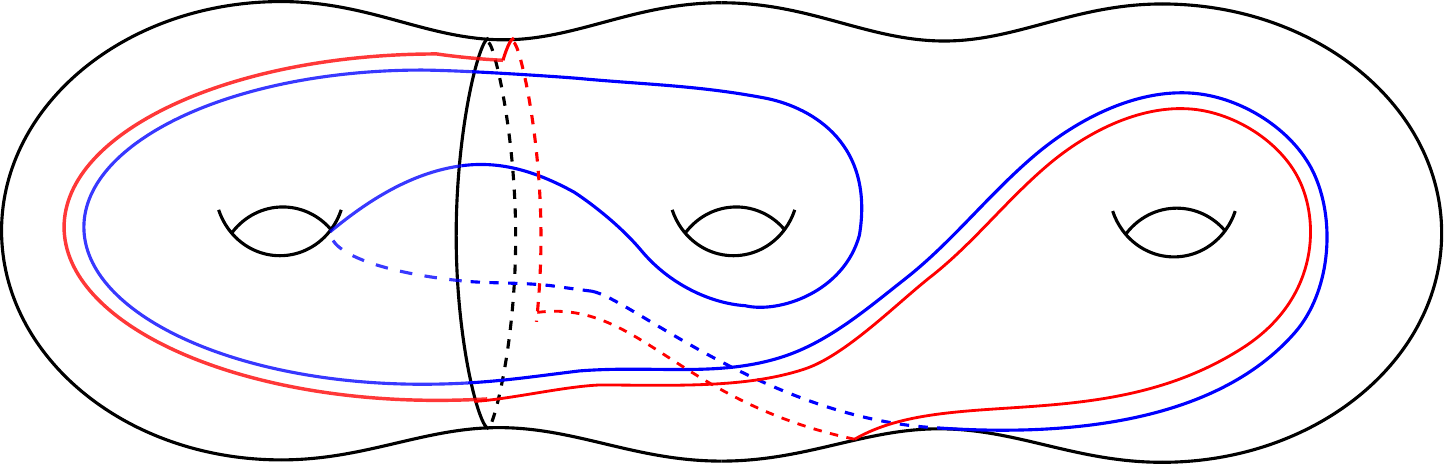}
\end{minipage}
\hspace{.5in}
\begin{minipage}{0.4\textwidth}
    \resizebox{\textwidth}{!}{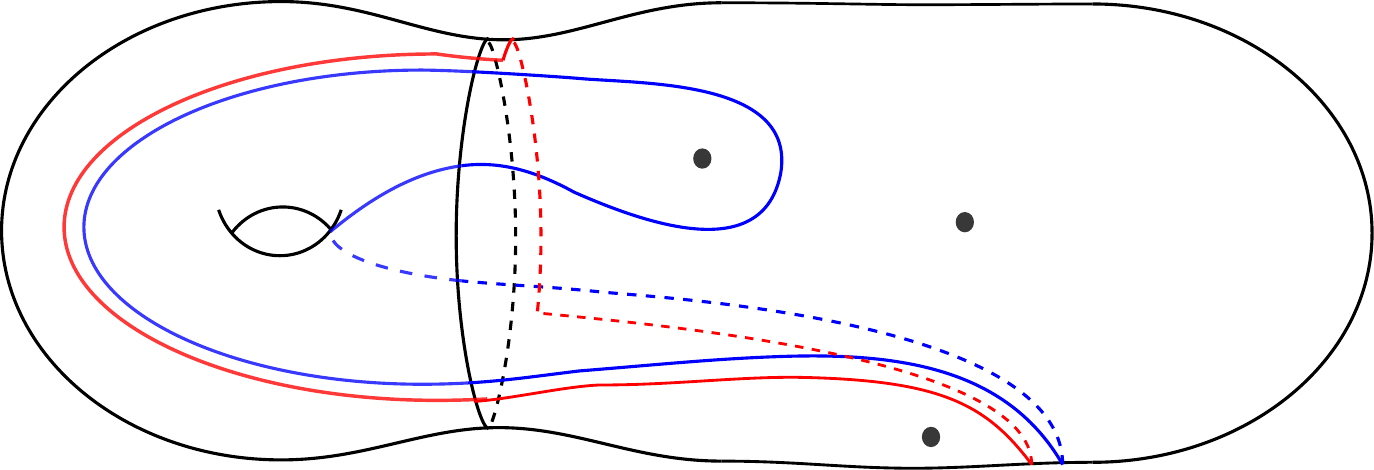}
\end{minipage}
\caption{The constructed curve $\beta$ is shown in red.}
\label{betaconstruction}
\end{figure}

\begin{claim} If $\alpha \in L(u,v)$ is of type (2), then  $d(\alpha, \gamma^+) = d(\alpha, \gamma^-) = 1$ \end{claim}

\begin{proof} In this case, $\alpha \subset S_2$. Thus, $d(\alpha, \gamma^+) = d(\alpha, \gamma^-) = 1$ and we are done. \end{proof}

\begin{claim} If $\alpha \in L(u,v)$ is of type (3), then $d_L(\alpha, \gamma^+) = d_L(\alpha, \gamma^-) = 1.$\end{claim} 

\begin{proof} In this case, $\alpha$ intersects both $\gamma^+$ and $\gamma^-$ exactly once. Hence, we have that $d_L(\alpha, \gamma^+) = d_L(\alpha, \gamma^-) = 1,$ as desired. \end{proof}

Now, let $\alpha, \beta \in L(u,v)$. Suppose both $\alpha$ and $\beta$ are two type (1) curves. If both $\alpha$ and $\beta$ are $2$-curves, then by  Case $1$, they are distance at most $2$ from both $\gamma^+$ and $\gamma^-$ and thus $d_L(\alpha, \beta) \leq 4$. If $\alpha$ is a $1$- curve and $\beta$ is a $2$-curve, then again we have that $d_{L}(\alpha, \beta) \leq 4$ because $d_L(\beta, \gamma^+), d_L(\beta, \gamma^-) \leq 2$ and thus $d_L(\alpha, \{\gamma^+, \gamma^-\}) = 1$. If both $\alpha$ and $\beta$ are $1$-curves, both are distance at most $1$ from either $\gamma^+$ or $\gamma^-$ and therefore are at a distance of at most $4$ from one another, since $d_L(\gamma^+, \gamma^-) = 2$. 

If $\alpha$ is a $1$-curve and $\beta \subset S_1$, then $d_L(\alpha, \beta) \leq 3$, since $\beta \in \{\gamma^+, \gamma^-\}$ and either $d_L(\alpha, \gamma^+) =1$ or $d_L(\alpha, \gamma^-) = 1$. Finally, if $\alpha$ is a 2-curve and $\beta \subset S_1$, then $d_L(\alpha, \beta) \leq 2$, since $\omega \in \{ \gamma^+, \gamma^-\}$ and $d_L(\alpha, \gamma^+), d_L(\alpha, \gamma^-) \leq 2$. 

It remains to consider curves of type (2) and (3). A type (2) curve intersects neither $u$ nor $v$, and therefore is disjoint from $S_1$. Thus it is distance $1$ from both $\gamma^{+}$ and $\gamma^{-}$, and therefore at distance at most $3$ from any type (1) curve and at a distance of at most $2$ from any other type (2) curve. A type (3) curve must also be distance $1$ from both $\gamma^{+}$ and $\gamma^{-}$ and so is also distance at most $3$ from any type (1) curve, and distance at most $2$ from any type (2) or type (3) curve. \end{proof}

\subsection{The link of a 1-edge has diameter at least 4}
\label{sub:geq4}

We end this section by showing that the diameter of the link of a 1-edge is at least 4.

\begin{lemma}\label{4}
Suppose $(g,p) \neq (1,2)$. If $(u,v)$ is a $1$-edge in $\N_1(S)$, then $\diam(L(u,v)) \geq 4.$
\end{lemma}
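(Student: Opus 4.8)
The plan is to retain the notation of Lemma~\ref{leq4}: $S_1=N(u\cup v)$ is a one-holed torus, $S_2=\overline{S\setminus S_1}$, the curves $u,v$ are the $0/1$ and $1/0$ curves, and $\gamma^{\pm}$ are the $\pm 1/1$ curves of $S_1$. I would first record one structural fact: the only vertices of $L(u,v)$ contained in $S_1$ are $\gamma^+$ and $\gamma^-$, since a curve $x\subset S_1$ lying in $L(u,v)$ satisfies $i(x,u),i(x,v)\le 1$ and is neither $u$ nor $v$, which in slope coordinates forces $x=\pm 1/1$. The two sporadic cases $(g,p)\in\{(1,0),(1,1)\}$ are immediate: there $\N_1(S)$ is the Farey graph and $L(u,v)=\{\gamma^+,\gamma^-\}$ with $i(\gamma^+,\gamma^-)=2$, so $L(u,v)$ is a pair of non-adjacent vertices. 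Hence I may assume $\chi(S_2)<0$; since $S_2$ is a three-holed sphere only when $(g,p)=(1,2)$, which is excluded, $S_2$ supports a pseudo-Anosov homeomorphism (fixing $\partial S_2$ pointwise) and contains two disjoint essential arcs.

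In the main case I would take the two witnesses to be \emph{$2$-curves}. The key point is that a $2$-curve $\alpha$ meets $S_1$ in exactly the $0$-arc and the $\infty$-arc, so a slope computation gives $i(\alpha,\gamma^+)=i(\alpha,\gamma^-)=2$; in particular $\alpha$ is \emph{not} adjacent to $\gamma^{\pm}$ in $\N_1(S)$, and $\pi_{S_2}(\alpha)$ is precisely the pair of arcs $\alpha\cap S_2$. Fix such an $\alpha$ (which exists by the previous paragraph), let $\phi\in\Mod(S)$ be supported on $S_2$ and restrict there to a pseudo-Anosov, and set $\beta:=\phi^N(\alpha)$ with $N$ to be chosen. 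Since $\phi$ is the identity on $S_1$ it fixes $u,v,\gamma^+,\gamma^-$, so $\beta\in L(u,v)$, $i(\beta,\gamma^{\pm})=i(\alpha,\gamma^{\pm})=2$, and $\pi_{S_2}(\beta)=\phi^N\pi_{S_2}(\alpha)$.

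Now suppose toward a contradiction that $\alpha=x_0,x_1,x_2,x_3=\beta$ is an edge-path in $L(u,v)$ of length at most $3$. No $x_i$ equals $\gamma^{+}$ or $\gamma^{-}$: the endpoints are $2$-curves, and an interior $x_i$ equal to $\gamma^{\pm}$ would force $\alpha$ or $\beta$ to be adjacent to $\gamma^{\pm}$, which was ruled out. By the first paragraph, every $x_i$ therefore projects nontrivially to $S_2$. The standard coarse-Lipschitz bound for subsurface projections (\cite{MasurMinsky2}; there is a universal $C$ with $d_{S_2}(x,y)\le C$ whenever $i(x,y)\le 1$ and $x,y$ both project to $S_2$), together with the triangle inequality, then gives that $d_{S_2}(\alpha,\beta)$ is bounded above by a function of $C$ alone, say $d_{S_2}(\alpha,\beta)\le 3C+2$. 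On the other hand, since $\phi|_{S_2}$ is pseudo-Anosov it acts loxodromically on $\mathcal{C}(S_2)$, which is quasi-isometric to $\mathcal{AC}(S_2)$ (Proposition~\ref{qi}, Remark~\ref{loxodromic}), so $d_{S_2}(\alpha,\beta)=d_{S_2}(\alpha,\phi^N\alpha)\to\infty$ as $N\to\infty$. Choosing $N$ large enough contradicts the previous bound, so $d_L(\alpha,\beta)\ge 4$, whence $\diam(L(u,v))\ge 4$.

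\textbf{Main obstacle.} The crux is choosing the right pair of witnesses. The naive choice of two $1$-curves with opposite slopes in $S_1$ does not work in general: each such curve is adjacent to one of $\gamma^{\pm}$, and the length-$2$ path $\gamma^+$--(type-$(3)$ curve)--$\gamma^-$ inside $S_1$ can be spliced on to produce a path of length $\le 3$ that never records the complexity in $S_2$. Passing to $2$-curves---the unique vertices of $L(u,v)\setminus\{\gamma^{\pm}\}$ whose intersection number with each of $\gamma^+,\gamma^-$ is at least $2$---is exactly what prevents a short path from escaping into $S_1$ and lets the subsurface-projection estimate close the argument. The remaining points are routine: confirming that $2$-curves exist and project as claimed (this is where the excluded small surfaces drop out), and the bookkeeping with the projection constant $C$.
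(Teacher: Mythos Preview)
Your argument is correct and follows essentially the same route as the paper's proof: both exhibit a pair of $2$-curves whose $S_2$-projections are far apart, use that a $2$-curve meets each of $\gamma^{\pm}$ exactly twice (hence is not adjacent to them), and conclude that any path of length at most $3$ would project to a short path in $\mathcal{AC}(S_2)$, a contradiction. The only cosmetic differences are that the paper selects arcs in $S_2$ at explicit distance $\ge 23$ and invokes the factor $3$ from Proposition~\ref{qi}, whereas you produce $\beta=\phi^N(\alpha)$ via a partial pseudo-Anosov and cite the coarse Lipschitz bound for subsurface projections; your explicit treatment of the Farey cases $(g,p)\in\{(1,0),(1,1)\}$ is a nice addition.
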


\begin{proof}
Let $u,$ $v,$ $S_1,$ and $S_2$ be as in Lemma \ref{leq4}. Again let $\gamma^{+}$ be the $1/1$ curve and $\gamma^{-}$ the $-1/1$ curve in $S_1$. It suffices to show that there exist two 2-curves, $\alpha$ and $\beta$, whose shortest connecting path passes through $\gamma^{+}$ or $\gamma^{-}$.

Since $S_2$ is not a $3$-holed sphere, the diameter of $\mathcal{AC}(S_2)$ is infinite. Hence there exist arcs $\delta, \eta  \in \mathcal{AC}(S_2)$ such that $d_{\mathcal{AC}(S_2)}(\delta, \eta) \geq 23$. There also exists arcs $\delta', \eta' \in \mathcal{AC}(S_2)$ such that $d_{\mathcal{AC}(S_2)}(\delta, \delta')=1=d_{\mathcal{AC}(S_2)}(\eta, \eta'),$ and so that the endpoints of $\delta$ and $\delta'$ (respectively, $\eta$ and $\eta'$) do not link along $\partial S_2.$ Note that $d_{\mathcal{AC}(S_2)}(\delta',\eta') \geq 21.$ 

Now, construct a 2-curve $\alpha$ from $\delta$ and $\delta'$ as follows: choose an endpoint from each of $\delta$ and $\delta'$ that are not consecutive on $\partial S_2$. Connect these through $S_1$ via the $1/0$ arc and connect the remaining two endpoints with the $0/1$ arc.  Construct $\beta$ in a similar manner from $\eta$ and $\eta'.$ Note that $\pi_{S_2}(\alpha) = \{ \delta, \delta'\}$ and $\pi_{S_2}(\beta) = \{ \eta, \eta'\}$. 

Let $\rho=\{\alpha, v_0,\ldots,v_n,\beta\}$ be any shortest path in $L(u,v)$ connecting $\alpha$ to $\beta.$ If each $v_i$ has a non-trivial projection to $S_2,$ by choosing one vertex from each of $\pi_{S_2}(v_i)$ one obtains a path of length $m \leq n + 2$ in $\mathcal{AC}_1(S_2)$ from  $\delta'$ to $\eta'.$ Note that $m$ may be strictly less than $n + 2$ if there exist $v_i$ and $v_j$ with the same projection to $S_2$. This, in turn, yields a path of length at most $3m$ in $\mathcal{AC}(S_2)$ between $\delta'$ and $\eta'$. The factor of $3$ comes from the quasi-isometry established in Proposition \ref{qi}. Since $d_{\mathcal{AC}(S_2)}(\delta', \eta') \geq 21$, then $3(n + 2) \geq 3m\geq 21,$ and so the length of $\rho$ is greater than or equal to 5, a contradiction to Lemma \ref{leq4}. 

This implies there exists some $v_i$ that projects trivially to $S_2$, and therefore $v_i \in \{\gamma^+, \gamma^- \}$. Therefore, we have that the length of $\rho$ is at least 4.\end{proof}

%Combining Lemmas \ref{leq4} and \ref{4}, we immediately obtain that every $1$-edge $(u,v)$ in $\N_1(S)$ has a link of diameter $4$, as desired.

\section{Disjoint curves}
\label{section:disjoint}

In this section we will complete the proof of Proposition \ref{prop:diam-4}, and use it to prove Theorem \ref{non-separating}, by proving the following lemma. 

\begin{lemma} \label{lemma:diam-link-4}
If $(u,v)$ is a $0$-edge in $\mathcal N_1(S)$, then $\diam L(u,v) \neq 4$.
\end{lemma}

This lemma is proved by considering two cases: when the two vertices $u,v$ of the edge $(u,v)$ are jointly separating, as considered in Lemma~\ref{proposition:diam-3}, and when they are jointly non-separating, as considered in Lemma~\ref{proposition:diam-inf}.

\subsection{The jointly separating case.} In this section we will prove Lemma~\ref{lemma:diam-link-4} for a $0$-edge $(u,v)$ when $u$ and $v$ are jointly separating.

\begin{lemma}\label{proposition:diam-3}
Suppose $u, v \in \mathcal{N}_1(S)$ are disjoint curves that are jointly separating. If both components of $S \setminus u \cup v$ contain non-separating curves of $S$, then $L(u, v)$ has diameter at most $3$. Otherwise, $L(u,v)$ has infinite diameter.
\end{lemma}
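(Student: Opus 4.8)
The plan is to analyze $L(u,v)$ by exploiting the splitting $S \setminus (u \cup v) = R_1 \sqcup R_2$, where $R_i$ are the two complementary pieces (here I use $R_i$ rather than introducing ambiguous $S_i$ notation). Any vertex $\alpha \in L(u,v)$ is a non-separating curve of $S$ with $i(\alpha,u) = i(\alpha,v) = 0$, hence $\alpha$ lies entirely in $R_1$ or entirely in $R_2$, and moreover $\alpha$ must be non-separating \emph{in $S$}. Write $L_i$ for the set of vertices of $L(u,v)$ contained in $R_i$. First I would handle the ``otherwise'' clause: suppose, say, $R_2$ contains no non-separating curve of $S$ (equivalently $R_2$ is planar, or more precisely every curve in $R_2$ separates $S$). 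Then $L(u,v) = L_1$, i.e.\ every vertex of the edge link lives in the single subsurface $R_1$. Since $R_1$ has negative Euler characteristic large enough to support essential curves (it must, as $u$ is non-separating in $S$ so $R_1$ is not a pair of pants capping things trivially), the curve graph / arc-and-curve graph of $R_1$ has infinite diameter, and the inclusion $\mathcal{C}(R_1) \hookrightarrow L(u,v)$ is a quasi-isometric embedding: a path in $L(u,v)$ between two curves of $R_1$ can be converted into a path in $\mathcal{AC}_1(R_1)$ by taking subsurface projections $\pi_{R_1}(v_j)$ of intermediate vertices (all of which are nontrivial, since every $v_j \in L_1$ already lies in $R_1$), and then Proposition \ref{qi} upgrades this to a bounded-ratio bound against $\mathcal{AC}(R_1)$. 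Choosing two curves in $R_1$ far apart in $\mathcal{C}(R_1)$ gives pairs in $L(u,v)$ at arbitrarily large distance, so $\diam L(u,v) = \infty$.

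Now suppose both $R_1$ and $R_2$ contain non-separating curves of $S$; I claim $\diam L(u,v) \le 3$. The key observation is that if $\alpha \in L_1$ and $\beta \in L_2$ then $i(\alpha,\beta) = 0$, so $d_{L}(\alpha,\beta) = 1$ (they are joined by an edge inside $L(u,v)$, provided $\alpha \neq \beta$, which holds since they live in disjoint subsurfaces). So the only issue is bounding distances \emph{within} each $L_i$. Fix a non-separating curve $c_2$ of $S$ lying in $R_2$; then $c_2 \in L_2 \subset L(u,v)$, and $c_2$ is disjoint from \emph{every} curve in $L_1$. Hence every vertex of $L_1$ is within distance $1$ of the single vertex $c_2$, so $\diam(L_1) \le 2$; symmetrically $\diam(L_2) \le 2$ using a fixed non-separating curve $c_1$ in $R_1$. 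Finally, for $\alpha \in L_1$ and $\beta \in L_2$ we already have $d_L(\alpha,\beta) = 1$; combining, any two vertices of $L(u,v)$ are joined by a path of length at most $\max(2,2,1) $ going through the $c_i$'s — more carefully, $\alpha, \alpha' \in L_1$ are within $2$ via $c_2$, $\beta,\beta' \in L_2$ within $2$ via $c_1$, and a mixed pair within $1$ — so the overall diameter is at most $3$ (the worst case being $\alpha \in L_1$, $\beta \in L_2$ with $\alpha,\beta$ both at distance $2$ from their respective reference curve is not needed; one checks $\alpha - c_2 - \beta'$ type paths give $\le 2$, and the genuine bound of $3$ arises from e.g.\ $\alpha,\alpha' \in L_1$ forced through $c_2 \in L_2$ when $\alpha,\alpha'$ are non-adjacent, then to a third vertex).

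The main obstacle I anticipate is the bookkeeping at the two boundary cases of the splitting — namely verifying that the relevant complementary subsurface $R_i$ genuinely has infinite-diameter arc-and-curve graph (one must rule out $R_i$ being a pair of pants or an annulus, which is where the hypothesis that $u,v$ are non-separating and the side in question ``contains non-separating curves'' or fails to, gets used), and checking the quasi-isometric embedding $\mathcal{C}(R_i) \hookrightarrow L(u,v)$ carefully, since a priori a geodesic in $L(u,v)$ between two curves of $R_1$ could wander through $L_2$. But that cannot happen for distance reasons: any such excursion would force an intermediate vertex in $L_2$, which is disjoint from \emph{both} endpoints in $L_1$, collapsing the distance to $2$; so either the two curves of $R_1$ are within $2$ in $L(u,v)$ (fine, consistent with a bound), or every geodesic stays in $L_1$ and the projection argument applies. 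I would phrase this dichotomy explicitly: either $d_L(\alpha,\alpha') \le 2$, or $d_L(\alpha,\alpha') = d_{\mathcal{AC}_1(R_1)}(\alpha,\alpha')$-comparable, and in the latter case the infinite diameter of $\mathcal{AC}(R_1)$ (via Proposition \ref{qi}) yields pairs realizing arbitrarily large $L$-distance, completing the infinite-diameter claim. The finite case then follows cleanly from the disjointness-through-$c_i$ argument above.
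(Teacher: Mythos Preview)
Your argument rests on a false premise: you assert that every $\alpha \in L(u,v)$ satisfies $i(\alpha,u)=i(\alpha,v)=0$ and hence lies entirely in one $R_i$. But the ambient graph is $\mathcal{N}_1(S)$, where edges record intersection number \emph{at most one}. Since $u \cup v$ separates, the parity constraint forces $i(\alpha,u)+i(\alpha,v)$ to be even, so besides the disjoint curves there is a third type of vertex in $L(u,v)$: non-separating curves with $i(\alpha,u)=i(\alpha,v)=1$, which meet both $R_1$ and $R_2$ in an arc. (Any such curve is automatically non-separating in $S$, having odd intersection with $u$.) The paper's proof treats these crossing curves explicitly in both halves of the lemma.

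Concretely, this gap breaks both cases. In the diameter-$\le 3$ argument, you must still bound $d_L(\alpha,\beta)$ when one or both of $\alpha,\beta$ are crossing curves; the paper does this by using the arc $\beta \cap R_2$ and a change-of-coordinates to locate a non-separating $\omega \subset R_2$ with $i(\omega,\beta)\le 1$. In the infinite-diameter argument, your claim that ``every $v_j \in L_1$ already lies in $R_1$'' is false: intermediate vertices along a path in $L(u,v)$ may well be crossing curves. The projection argument can be salvaged, but the correct justification is the one the paper gives: when $R_2$ contains no non-separating curve of $S$, the set $L_2$ is empty, and every remaining vertex of $L(u,v)$ (whether a curve in $R_1$ or a crossing curve) has nontrivial projection to $R_1$. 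You also need to verify that $R_1$ is not a three-holed sphere so that $\mathcal{AC}_1(R_1)$ has infinite diameter; the paper handles this using the standing hypothesis $(g,p)\neq(1,2)$.
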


\begin{proof}
Denote the two connected components of $S \setminus (u \cup v)$ by $S_1$ and $S_2$. Then any curve in $L(u,v)$ is contained in either $S_1$ or $S_2$ or has nontrivial intersection with both subsurfaces. Let $\alpha, \beta \in L(u,v)$. Assume first that both $S_1$ and $S_2$ contain non-separating curves of $S$.

If $\alpha$ and $\beta$ are contained in the same component of $S\setminus (u \cup v)$, say $S_1$, then $d_L(\alpha, \beta) \leq 2$ since by assumption there is a non-separating curve contained entirely in $S_2$. On the other hand, if $\alpha$ and $\beta$ are contained in different components of $S \setminus (u \cup v)$, then they are disjoint and thus $d_L(\alpha, \beta) = 1$.

Without loss of generality, suppose $\alpha$ is contained in $S_1$ and $\beta$ has non-trivial intersection with both $S_1$ and $S_2$. Then $\beta \cap S_2$ is a non-separating arc on $S_2$ with endpoints on distinct boundary components. Letting $\omega$ denote a non-separating curve of $S$ contained in $S_2$ (which exists by assumption), by the classification of surfaces there is a homeomorphism $f$ of $S_2$ fixing $u$ and $v$ which sends $\beta \cap S_2$ to an arc crossing $\omega$ at most once. If $\omega$ does not separate $u$ from $v$, we can choose $f$ so that $f(\beta \cap S_2)$ is disjoint from $\omega$. Otherwise, we can choose $f$ so that $f(\beta \cap S_2)$ crosses $\omega$ exactly once. Then $f^{-1}(\omega)$ is a non-separating curve on $S$ that is at an $L(u,v)$-distance of $1$ from both $\alpha$ and $\beta$. 

Finally, if both $\alpha$ and $\beta$ have non-trivial intersections with both components of $S \setminus (u \cap v)$, then each of them intersects $S_1$ and $S_2$ in a non-separating arc. Then by the same argument used in the previous paragraph there are non-separating curves $\omega_1 \subset S_1, \omega_2 \subset S_2$ so that $d_L(\alpha, \omega_1)= d_L(\beta, \omega_2) = 1$. Thus, $d_L(\alpha, \beta) \leq 3$, since $i(\omega_1, \omega_2) = 0$.   

It remains to consider the case where either $S_1$ or $S_2$ contains no non-separating curve of $S$. Since $S$ is not a twice punctured torus, at least one component of $S \setminus (u \cup v)$, say $S_1$, is not a $3$-holed sphere. Now for $n \in \mathbb{N}, n>2$, by Proposition \ref{qi}, choose arcs $\omega_1, \omega_n$ with the same endpoints, each with one endpoint on $u$ and the other on $v$ so that 
\[ d_{\mathcal{AC}_1(S_1)}(\omega_1, \omega_n) \geq n. \]

Then choose an arc $\lambda \subset S_2$ with the same endpoints as $\omega_1, \omega_n$ so that the concatenation of $\lambda$ with $\omega_1$ and with $\omega_n$ yields non-separating curves $\eta_1, \eta_n$ (these will be non-separating because they both cross each of $u$ and $v$ exactly once). As $S_2$ contains no non-separating curves, any curve in $L(u,v)$ must project non-trivially to $S_1$, and so a path in $L(u,v)$ from $\eta_1$ to $\eta_n$ gives rise to a path in $\mathcal{AC}_1(S_1)$ of length on the order of $n$. It follows that $d_{L}(\eta_1, \eta_n)$ is at least on the order of $n$. As $n$ was arbitrary, the diameter of $L(u,v)$ must be infinite. \end{proof}

We record the following remark as it will be useful in the proof of Conjecture \ref{SchallerC}:

\begin{remark} \label{separating curves}
If $u$ is a separating curve on $S$ that bounds a $3$-holed sphere on one side and $v$ is another curve representing a vertex of $\mathcal{SC}(S)$ so that $i(u,v)=0$, then the proof of Lemma \ref{proposition:diam-3} implies that the diameter of $L(u,v)$ in $\mathcal{SC}(S)$ is infinite. Indeed, in this case $S_1$ consists of either a single $3$-holed sphere or a disjoint union of two $3$-holed spheres. Thus, there is no curve in $L(u,v)$ that does not project to $S \setminus S_1$ which is the only assumption used in the last two paragraphs of the above proof. 
\end{remark}

\begin{remark} \label{genus 2}
If the genus of $S$ is $1$, then if $u,v$ are disjoint non-separating curves, they must be jointly separating. Indeed, cutting along $u$ (or $v$) produces a planar surface. Therefore, in the next subsection it will suffice to assume that the genus of $S$ is at least $2$. 
\end{remark}

\subsection{The jointly non-separating case}
Let $u,v$ be disjoint curves in $S$ such that $u$ and $v$ are jointly non-separating. In this case, we show that the diameter of $L(u,v)$ in $\N _1 (S)$ is infinite. This concludes the proof of Lemma~\ref{lemma:diam-link-4}, which in turn completes the proof of Proposition~\ref{prop:diam-4}.

\begin{lemma}\label{proposition:diam-inf}
Let $g \ge 2$. If $u, v \in \N_1(S)$ are disjoint curves that are jointly non-separating, then $L(u, v)$ has infinite diameter.
\end{lemma}

\begin{proof}
Consider $S'= S\setminus (u \cup v).$ Since the genus of $S$ is at least $2$, $S'$ is not a $3$-holed sphere and so by Proposition \ref{qi}, $\mathcal{AC}_1(S')$ has infinite diameter. Let $\lambda_u, \lambda_v$ denote simple closed curves on $S$ so that $\lambda_u$ (resp. $\lambda_v$) crosses $u$ (resp. $v$) exactly once. 
 
Choose a pseudo-Anosov mapping class $\phi \in \mbox{Mod}(S')$. By Remark \ref{loxodromic}, given $n \in \mathbb{N}$ there exists an $N \geq 1$ so that \[ d_{\mathcal{AC}_1(S')}(\pi_{S'}\lambda_u, \phi^{N}(\pi_{S'}\lambda_v)) > n.\]
 
Let $\lambda_v^{\phi^N}$ denote the simple closed curve on $S$ obtained by turning $\phi^{N}(\pi_{S'} \lambda_v)$ into a simple closed curve by including its intersection point with $u$. Then, since any essential simple closed curve on $S$ projects non-trivially to $S'$, an $L(u,v)$- path from $\lambda_u$ to $\lambda_v^{\phi^{N}}$ gives rise to a path in $\mathcal{AC}_1(S')$ of comparable length, between their projections. Thus, we have produced vertices in $L(u,v)$ which are arbitrarily far apart in $L(u,v)$ and so $L(u,v)$ has infinite diameter, as desired. \end{proof}

\subsection{The proof of Theorem \ref{non-separating}} 

We are now in a position to prove Theorem \ref{non-separating}.

\vspace{2 mm}

\noindent \textbf{Theorem \ref{non-separating}.}
\textit{Suppose that $g \geq 1$ and that $(g,p) \neq (1,2)$. Then the natural map 
\[ \mbox{Mod}^{\pm}(S) \rightarrow \mbox{Aut}(\mathcal{N}_1(S))\]
is an isomorphism for $g \neq (2,0)$ and a surjection with kernel $\mathbb{Z}/2\mathbb{Z}$ otherwise.}

\vspace{2 mm}

\begin{proof}
Let $f\in \Aut(\N_1(S))$. By Proposition~\ref{prop:diam-4}, we can conclude that graph automorphisms of $\N_1(S)$ preserve edge types. Thus, $f$ induces a graph automorphism of $\mathcal{N}(S)$ and of $\mathcal{G}(S)$ by restriction. Since the vertex sets of $\mathcal{N}(S)$ , $\mathcal{N}_1(S)$, and $\mathcal{G}(S)$ are the same,  $\mbox{Aut}(\mathcal{N}_1(S))$ injects into $\mbox{Aut}(\mathcal{G}(S))$.  Hence, $\Aut(\N_1(S)) \leq \Aut(\mathcal{G}(S))$. 

Then by Theorem A of \cite{Schaller}, $f$ is induced by a mapping class of $S$. Conversely when $(g,p) \neq (2,0)$ every mapping class gives rise to a distinct automorphism of $\mathcal{N}_1(S)$. When $(g,p)= (2,0)$, mapping classes give rise to distinct graph automorphisms exactly when they reside in distinct cosets of the centralizer of $\mbox{Mod}^{\pm}(S)$. \end{proof}

\begin{remark} When $g \geq 2$, Theorem \ref{non-separating} can also be proved by appealing to Theorems $1.1$ and $1.2$ of \cite{Irmak} together with Lemmas \ref{leq4}, \ref{4}, and \ref{proposition:diam-3}, and Proposition \ref{proposition:diam-inf}. 
\end{remark}

\section{Automorphisms of the Systolic Complex}
\label{section:systolic}

The purpose of this section is to prove Theorem \ref{Schaller} which we do in the final subsection. Recall that when a surface $S$ has multiple punctures, the graph $\mathcal{SC}(S)$ includes vertices representing separating curves that bound a $3$-holed sphere on one side. Such a vertex is connected to another vertex $v$ by an edge whenever the corresponding curves are disjoint or intersect exactly twice. 

Two main tools in the proof of Theorem~\ref{Schaller} are the following propositions. The first proposition characterizes the diameter of the link of a 2-edge $(u,v)$ when both $u$ and $v$ are separating.

\begin{proposition}\label{lemma:summarydiam=4}
Suppose $g\geq 1$ and $(g,p)\neq (0,5), (1,3).$ If $(u,v)$ is a 2-edge in $\mathcal{SC}(S)$ with $u$ and $v$ both separating then $\diam(L(u,v))=~4.$
\end{proposition}

The second proposition characterizes the diameter of the link of a 2-edges $(u,v)$ when exactly one of $u$ and $v$ is separating.

\begin{proposition}\label{prop:summarydiam=3}
Suppose $g\geq 1$ and $(g,p)\neq (0,5), (1,3).$ If $(u,v)$ is a 2-edge in $\mathcal{SC}(S)$ with $u$ nonseparating and $v$ separating, then $\diam(L(u,v)) = 3.$ Furthermore, whenever $d_{L}(\alpha, \beta)=3$, there exists a path of length $3$ in $L(u,v)$ from $\alpha$ to $\beta$ that passes through $\left\{b_1, b_2 \right\}$.
\end{proposition}

The proof of Proposition~\ref{lemma:summarydiam=4} will break into two pieces, Lemma~\ref{2-edge upper} and Lemma~\ref{2-edge}. Likewise, the proof of Proposition~\ref{prop:summarydiam=3} will break into Lemmas~\ref{lemma:leq3} and \ref{2-edge-3}. 

\subsection{Upper bounds on the diameter of the link of a 2-edge in $\mathcal{SC}(S)$} \label{sec:2-edge upper}

Let $(u,v)$ be an edge in $\mathcal{SC}(S)$ with $i(u,v)=2$; we refer to such an edge as a $2$-\textit{edge}. In this case, at least one of $u,v$ is separating and the subsurface obtained by thickening the union of $u$ and $v$ is necessarily a $4$-holed sphere. As in the case of the punctured torus, the simple closed curves on a $4$-holed sphere are also naturally parameterized by slopes in $\mathbb{Q} \cup \infty$, and without loss of generality we identify $u,v$ with the $1/0$ and $0/1$ curves. 

\begin{lemma} \label{2-edge upper}
Suppose $(g,p) \neq (0,5)$. If $(u,v)$ is a $2$-edge in $\mathcal{SC}(S)$ with both $u$ and $v$ separating, then $\diam(L(u,v)) \leq 4.$
\end{lemma}

\begin{proof}
Let $S_1$ be the $4$-holed sphere that forms the regular neighborhood of $u \cup v$. Denote the $1/1$ and $-1/1$ curves in $S_1$ by $\gamma^+$ and $\gamma^-$, respectively. Note that $\gamma^{\pm}$ both intersect $u$ and $v$ twice.

Since both $u$ and $v$ are separating, three of the four boundary components of $S_1$ necessarily correspond to punctures of $S$ which implies that both $\gamma^{+}$ and $\gamma^{-}$ are separating and bound a $3$-holed sphere on one side. Therefore, both are elements of $L(u,v)$.

Now consider $\alpha \in L(u,v)$. Note that $|\alpha \cap S_1| \leq 2$. This is because every component of $\alpha \cap S_1$ has its endpoints on a single boundary component (the one that is not a puncture of $S$) and $\alpha$ intersects both $u$ and $v$ either $0$ or $2$ times, since $u$ and $v$ are both separating.

If $|\alpha \cap S_1| = 1$, then $\alpha$ will be at an $L(u,v)$-distance of $1$ from $\gamma^{\pm}$.

If $|\alpha \cap S_1| = 2$, then $\alpha \cap S_1$ must be (up to homeomorphism) as pictured in Figure \ref{fig:Lemma4-1-Fig1}. %Note that $S_2$ is either a sphere with at least $4$ punctures or it has positive genus, since $S$ is not the $5$-holed sphere. 

\begin{figure}[htb!]
\centering
\def\svgwidth{1.5in}
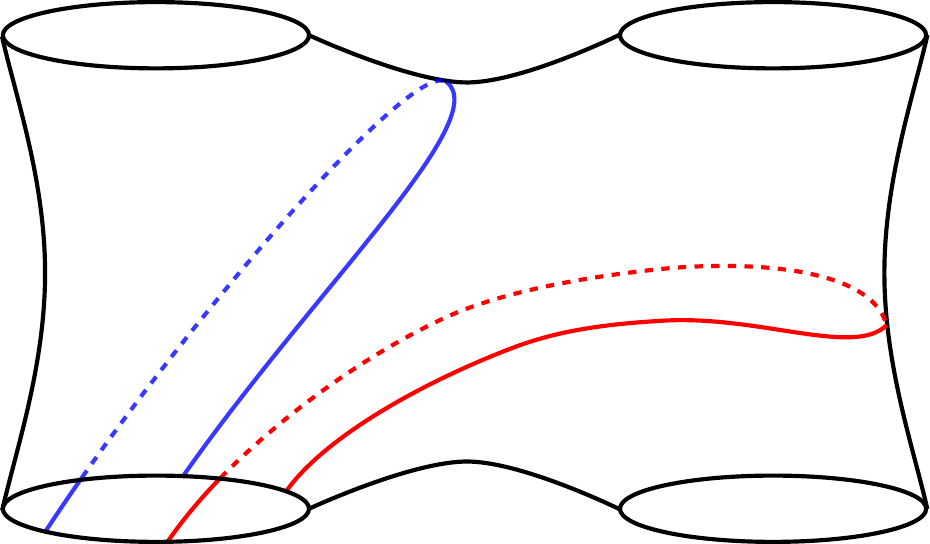
\caption{The intersection pattern of $\alpha \in L(u,v)$ with $S_1$ when $|\alpha \cap S_1| = 2$ and both $u$ and $v$ are separating.}
\label{fig:Lemma4-1-Fig1}
\end{figure}

First suppose that $\alpha$ is separating. Take one of the arcs of $\alpha$ contained in $S_2$. This can be concatenated with the blue arc shown in Figure \ref{fig:Lemma4-1-Fig1} to obtain a simple closed curve $\eta$, which is disjoint from $u$ and intersects each of $v$, $\alpha$, and (without loss of generality) $\gamma^{+}$ twice. Based on the topology of $S_2$ and the arcs of $\alpha \cap S_2$, $\eta$ can be chosen to be either non-separating, or a separating curve that bounds a $3$-holed sphere on one side. 

Thus, $\eta \in L(u,v)$, $d_L(\alpha, \eta) = 1$ and $d_L(\eta, \gamma^{+}) = 1$. The possible configurations for $\eta$ are shown in Figure~\ref{fig:section5fix}.

\begin{figure}
    \centering
    \def\svgwidth{3.5in}
    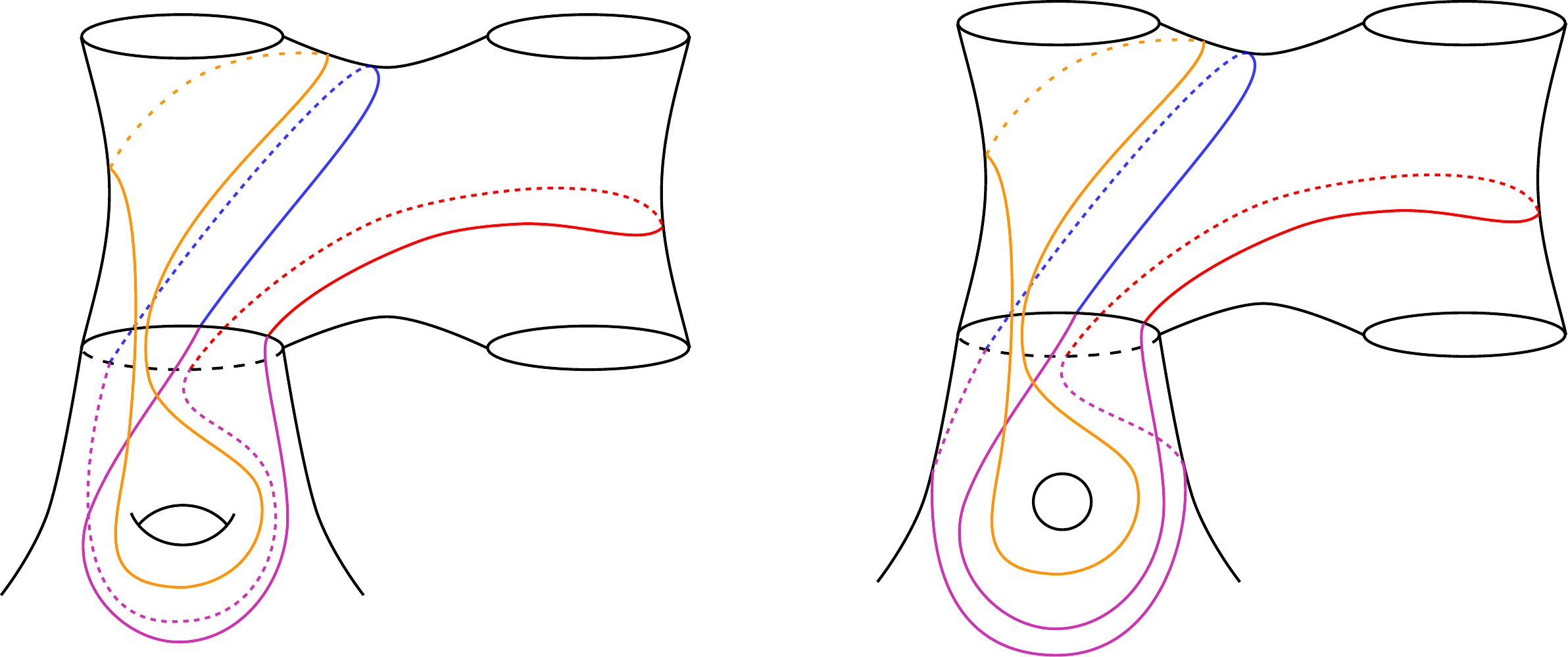
    \caption{The curve $\eta$ (drawn in orange) is either nonseparating (shown on the left) or separating (shown on the right) depending on the configuration of $\alpha \cap S_2$.}
    \label{fig:section5fix}
\end{figure}

Now suppose $\alpha$ is non-separating. Then we we claim that it is possible to build a simple closed curve $\eta$ contained entirely in $S_2$ which intersects $\alpha$ exactly once and is thus non-separating (and therefore in $L(u,v)$). Indeed, take an essential arc $\lambda$ in $S_2$ disjoint from $\alpha$ with endpoints on $b_1$ (the existence of $\lambda$ is guaranteed because $\alpha$ is non-separating). We can then concatenate $\lambda$ with a subarc of $b_1$ that intersects $\alpha$ exactly once. This concatenation gives us our desired curve $\eta$, illustrated in Figure~\ref{fig:final-eta}. It follows that $\alpha$ is at an $L(u,v)$-distance of at most $2$ from both $\gamma^+$ and $\gamma^-$.\end{proof}

\begin{figure}[htb!]
\centering
\def\svgwidth{1.5in}
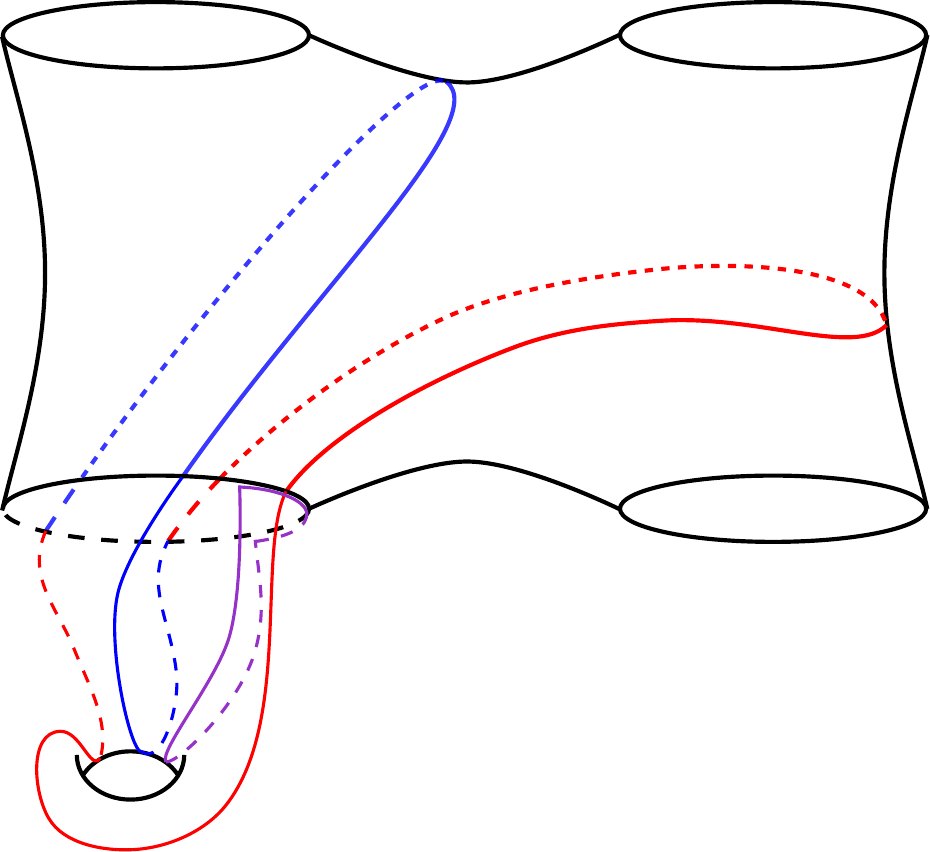
\caption{Constructing $\eta$ when $\alpha$ is non-separating.}
\label{fig:final-eta}
\end{figure}

\begin{lemma} \label{lemma:leq3}
Suppose $(g,p) \neq (0,5)$. If $(u,v)$ is a $2$-edge in $\mathcal{SC}(S)$ with $u$ nonseparating and $v$ separating, then $\diam(L(u,v)) \leq 3.$
\end{lemma}

\begin{proof} Let $\alpha, \beta \in L(u,v)$ with $u$ nonseparating and $v$ separating as shown in Figure~\ref{fig:labelled-u-v}. The boundary curves, $b_1$ and $b_2$, of $S_1$ will play a crucial role in each case of this proof. Note that $b_1$ and $b_2$ are necessarily nonseparating. If they were both separating, then $u$ would also be separating and if only one of them was separating, then the other would be a boundary component and again $u$ would be separating.

\begin{figure}[htb!] 
\centering
\def\svgwidth{2in}
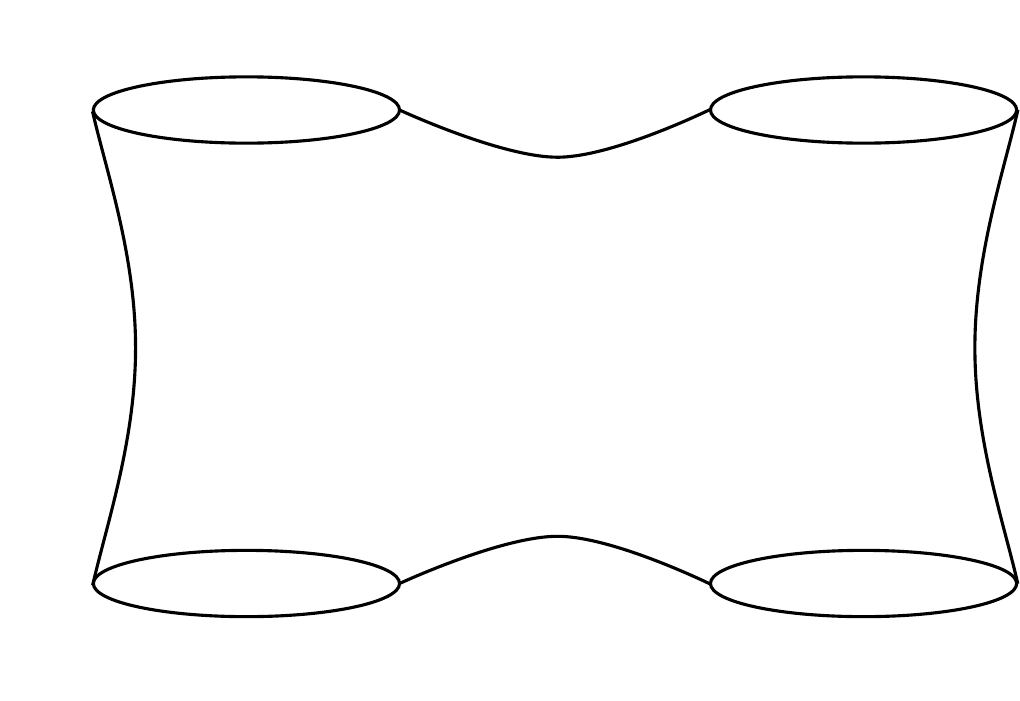
\caption{The regular neighborhood $S_1$ of $u$ and $v$ when $u$ is non-separating and $v$ is separating. }
\label{fig:labelled-u-v}
\end{figure}

We will make use of the following possible intersection patterns shown in Figure~\ref{fig:possibilities}. In each case, we will show that $\alpha$ has an $L(u,v)$- distance of $1$ from at least one of $\left\{b_{1}, b_{2} \right\}$. It is also possible for $\alpha$ to intersect $S_{1}$ in a sub-multi-arc of one of the multi-arcs pictured below, but this possibility is subsumed by the three arguments outlined below. 

\begin{figure}[htb!]
    \centering
    \def\svgwidth{5in}
    %% Creator: Inkscape 1.0.2 (e86c8708, 2021-01-15), www.inkscape.org
%% PDF/EPS/PS + LaTeX output extension by Johan Engelen, 2010
%% Accompanies image file '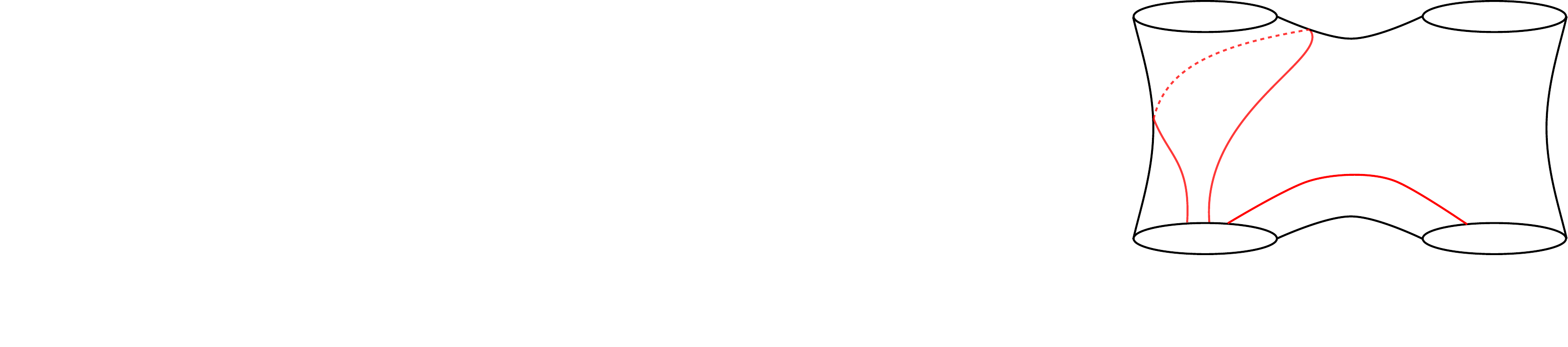' (pdf, eps, ps)
%%
%% To include the image in your LaTeX document, write
%%   \input{<filename>.pdf_tex}
%%  instead of
%%   \includegraphics{<filename>.pdf}
%% To scale the image, write
%%   \def\svgwidth{<desired width>}
%%   \input{<filename>.pdf_tex}
%%  instead of
%%   \includegraphics[width=<desired width>]{<filename>.pdf}
%%
%% Images with a different path to the parent latex file can
%% be accessed with the `import' package (which may need to be
%% installed) using
%%   \usepackage{import}
%% in the preamble, and then including the image with
%%   \import{<path to file>}{<filename>.pdf_tex}
%% Alternatively, one can specify
%%   \graphicspath{{<path to file>/}}
%% 
%% For more information, please see info/svg-inkscape on CTAN:
%%   http://tug.ctan.org/tex-archive/info/svg-inkscape
%%
\begingroup%
  \makeatletter%
  \providecommand\color[2][]{%
    \errmessage{(Inkscape) Color is used for the text in Inkscape, but the package 'color.sty' is not loaded}%
    \renewcommand\color[2][]{}%
  }%
  \providecommand\transparent[1]{%
    \errmessage{(Inkscape) Transparency is used (non-zero) for the text in Inkscape, but the package 'transparent.sty' is not loaded}%
    \renewcommand\transparent[1]{}%
  }%
  \providecommand\rotatebox[2]{#2}%
  \newcommand*\fsize{\dimexpr\f@size pt\relax}%
  \newcommand*\lineheight[1]{\fontsize{\fsize}{#1\fsize}\selectfont}%
  \ifx\svgwidth\undefined%
    \setlength{\unitlength}{1606.18326842bp}%
    \ifx\svgscale\undefined%
      \relax%
    \else%
      \setlength{\unitlength}{\unitlength * \real{\svgscale}}%
    \fi%
  \else%
    \setlength{\unitlength}{\svgwidth}%
  \fi%
  \global\let\svgwidth\undefined%
  \global\let\svgscale\undefined%
  \makeatother%
  \begin{picture}(1,0.22099852)%
    \lineheight{1}%
    \setlength\tabcolsep{0pt}%
    \put(0,0){\includegraphics[width=\unitlength,page=1]{AlphaConfigIncluded.pdf}}%
    \put(0.85970241,0.00375744){\color[rgb]{0,0,0}\makebox(0,0)[t]{\lineheight{1.25}\smash{\begin{tabular}[t]{c}$(iii)$\end{tabular}}}}%
    \put(0,0){\includegraphics[width=\unitlength,page=2]{AlphaConfigIncluded.pdf}}%
    \put(0.13747037,0.00375744){\color[rgb]{0,0,0}\makebox(0,0)[t]{\lineheight{1.25}\smash{\begin{tabular}[t]{c}$(i)$\end{tabular}}}}%
    \put(0,0){\includegraphics[width=\unitlength,page=3]{AlphaConfigIncluded.pdf}}%
    \put(0.4985864,0.00375744){\color[rgb]{0,0,0}\makebox(0,0)[t]{\lineheight{1.25}\smash{\begin{tabular}[t]{c}$(ii)$\end{tabular}}}}%
  \end{picture}%
\endgroup%

    \caption{Possible intersection patterns of a curve in $L(u,v)$ with $S_1$ when exactly one of $u$ and $v$ is separating. Note that it is also possible for $\alpha$ to intersect $S_1$ in a sub-multi-arc of one of the multi-arcs pictured above. The arguments outlined below will apply to this situation.}
    \label{fig:possibilities}
\end{figure}

\noindent {\bf Case 1:} Suppose $\alpha$ and $\beta$ are both separating. Since they are separating, they are both in either configuration $(i)$ or $(ii)$. This implies that there exists $i \in \{1, 2\}$ such that $\alpha$ intersects $b_i$ twice. The same is true for $\beta$, although not necessarily with the same value of $i$. 

Assume first that $d_L(\alpha, b_1) = d_L(\beta, b_1) = 1$. Then $d_L(\alpha, \beta) \leq 2$. 

Otherwise, $d_L(\alpha, b_1) = 1$ and $d_L(\beta, b_2) = 1$. Since $b_1$ and $b_2$ are disjoint, $d_L(b_1, b_2) = 1$. So $d_L(\alpha, \beta) \leq 3$.

\noindent {\bf Case 2:} Suppose $\alpha$ is separating and $\beta$ is nonseparating. Since $\beta$ is nonseparating it must be in configuration $(iii)$. So there exists $i \in \{1, 2\}$ such that $d_L(\beta, b_i) = 1$. Since $\alpha$ is separating it must be either in configuration $(i)$ or $(ii)$. By the same reasoning as in Case 1, $d_L(\alpha, b_i) = 1$ for some $i \in \{1, 2\}$. The argument then resolves in the same way as in Case 1.

\noindent{\bf Case 3:} Suppose $\alpha$ and $\beta$ are both nonseparating. Then $\alpha$ and $\beta$ must be in configuration $(iii)$. So there exists $i \in \{1, 2\}$ such that $d_L(\beta, b_i) = 1$. The same is true for $\beta$, although not necessarily with the same value of $i$. Once again, the argument resolves as in Case 1.
\end{proof}

\subsection{Lower bounds on the diameter of the link of a 2-edge in $\mathcal{SC}(S)$} \label{2-edge lower}

In this section we finish the proofs of Propositions~\ref{lemma:summarydiam=4} and \ref{prop:summarydiam=3} by proving the following two lemmas.

\begin{lemma} \label{2-edge}
Suppose $g \geq 1$ and that $(g,p) \neq (1,3)$. If $(u,v)$ is a $2$-edge in $\mathcal{SC}(S)$ with both $u$ and $v$ separating, then $\diam(L(u,v)) \geq 4.$
\end{lemma}

\begin{proof} 
%Without loss of generality, we may assume $v$ is a separating curve bounding two punctures of $S$. 

Let $S_1$ denote the regular neighborhood of $u \cup v$. As mentioned in the proof of Lemma~\ref{2-edge upper}, $S_1$ must be a sphere with $4$ boundary components, exactly three of which are boundary components of $S$.

Since $(g,p) \neq (0,5)$ or $(1, 3)$, the complementary subsurface $S_{2}= S \setminus S_{1}$ is not a $3$-holed sphere, and thus the diameter of $\mathcal{AC}_1(S_{2})$ is infinite. As in the proof of Lemma \ref{4}, we will construct a pair of $2$-curves $\alpha$ and $\beta$ whose shortest path in $L(u,v)$ passes through $\{\gamma^+, \gamma^-\}$, both of which are in $L(u,v)$. Using the same notation as before, we will first specify the construction for $\alpha \cap S_2 = \{\eta, \eta' \}$. 

The assumptions in the statement of the proposition guarantee that $S_2$ contains a non-separating curve $c$ of $S$, and let $b$ be the essential boundary component of $S_1$ (that is, the boundary component of $S_1$ not corresponding to a boundary component of $S$). Let  $\lambda$ be an embedded arc connecting $b$ to $c$. We consider an arc $\eta \subset S_2$ with both endpoints on $b$, obtained by traveling along $\lambda$, then around $c$, and then back to $b$ along the inverse of $\lambda$. 

Note that $S_{2}$ has positive genus since we assumed that $S$ has genus at least $1$. This situation is illustrated in Figure \ref{fig:Section5-non-planar}. So on $S_{2}$ there must exist a second essential arc $\eta'$ disjoint from $\eta$ and whose endpoints link with those of $\eta$ along $b$. Furthermore, we can choose $\eta'$ so that it intersects $c$ exactly once. As shown in Figure \ref{fig:Section5-linking}, the existence of $\eta'$ is guaranteed so long as $S_{2}$ is not planar.

\begin{figure}[htb!]
\centering
\def\svgwidth{3in}
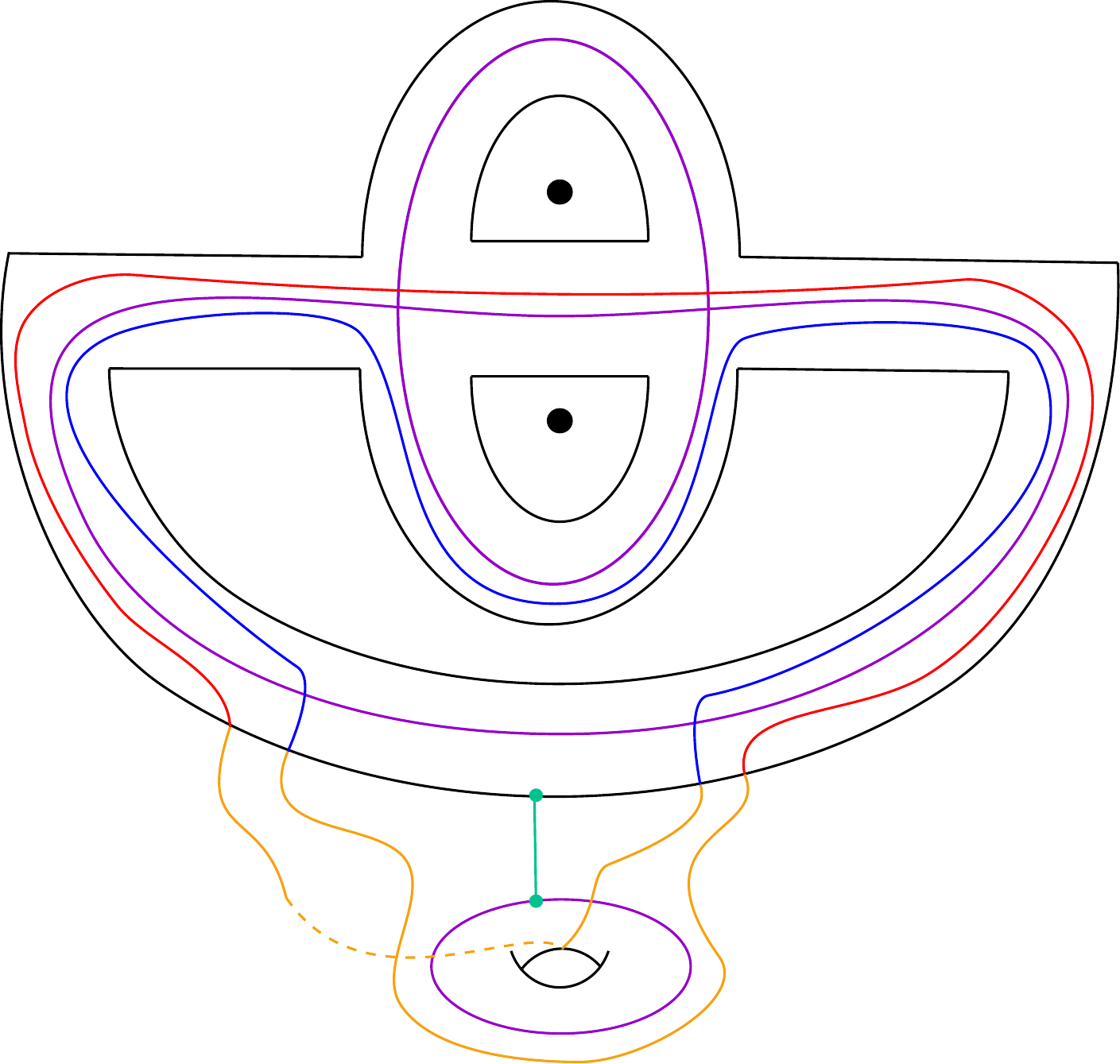
\caption{A diagram of $S_1$ and the construction of $\alpha$ in the event that $S_2$ has positive genus.}
\label{fig:Section5-non-planar}
\end{figure}

\begin{figure}[htb!]
\centering
\def\svgwidth{2in}
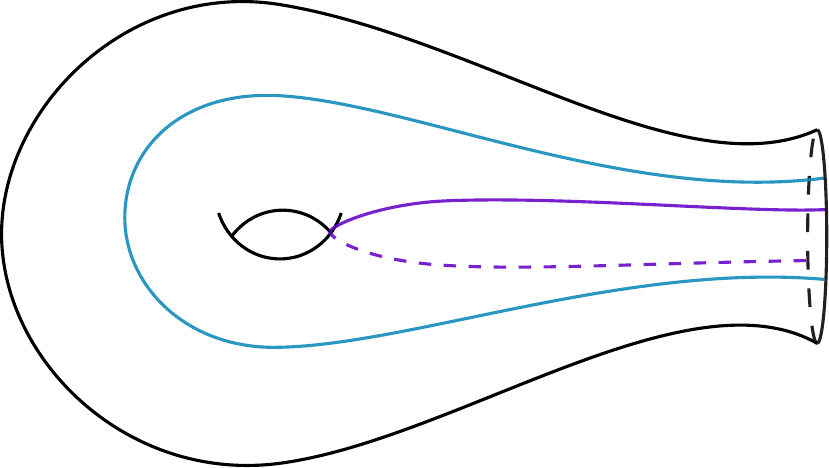
\caption{In any surface with at least one boundary component and genus $g \geq 1$, given an arc $\eta$ constructed from a nonseparating curve as described above, there exists an arc $\eta'$ disjoint from $\eta$ and whose endpoints link with those of $\eta$.}
\label{fig:Section5-linking}
\end{figure}

Now we specify the construction of $\alpha \cap S_1 = \{r_1, r_2 \}$ as shown in Figure \ref{fig:Lemma4-1-Fig1} ($r_1$ is the blue arc and $r_2$ is the red arc). Choose $r_1, r_2$ so that the endpoints of $r_1$ coincide with one endpoint of $\eta$ and with one of $\eta'$, and similarly for $r_2$. Then the concatenation $\eta \cdot r_1 \cdot \eta' \cdot r_2$ yields a simple closed curve $\alpha$ which is necessarily non-separating; indeed, by construction it intersects $c$ exactly once. Moreover, $\alpha$ intersects both $\gamma^+$ and $\gamma^-$ more than twice.

By applying to $\alpha$ a high power of a mapping class fixing $S_1$ and acting as a pseudo-Anosov on $S_2$, we obtain a second non-separating curve $\beta$ whose projection to $S_2$ is arbitrarily far away from the projection of $\alpha$ to $S_2$ in $\mathcal{AC}_1(S_2)$. The lemma now follows because both $\alpha$ and $\beta$ are at least distance $2$ from $\{\gamma^+, \gamma^-\}$, the only two curves in $L(u,v)$ that project trivially to $S_2$. \end{proof}

\begin{lemma} \label{2-edge-3}
Suppose $g \geq 1$ and that $(g,p) \neq (1,3)$ . If $(u,v)$ is a $2$-edge in $\mathcal{SC}(S)$ with $u$ nonseparating and $v$ separating, then $\diam(L(u,v)) \geq 3.$ Furthermore, whenever $d_{L}(\alpha, \beta)=3$, there exists a path of length $3$ in $L(u,v)$ from $\alpha$ to $\beta$ that passes through $\left\{b_1, b_2 \right\}$.
\end{lemma}

\begin{proof} We note that the last sentence is implied by the proof of Lemma \ref{lemma:leq3}: in each of the cases outlined there, there exists a path of length $3$ connecting $\alpha$ to $\beta$ which passes through $b_{1}$ or $b_{2}$.

Without loss of generality, we may assume $v$ is a separating curve bounding two punctures of $S$. Let $S_1$ denote the regular neighborhood of $u \cup v$. See Figure~\ref{fig:Proposition5.2-u-v-configs}. We claim that $S_{1}$ is topologically a sphere with $4$ boundary components. Indeed, $S_{1}$ has at least three boundary components, two of which correspond to the pair of punctures on one side of $v$. There are three boundary components if and only if $S_{1}$ has positive genus, but in that case, its Euler characteristic is $2-2-3 =-3$. Each complementary region of $u \cup v$ on $S_{1}$ is either a disk or a punctured disk and as a graph on $S_{1}$ with vertices corresponding to intersection points, $u \cup v$ is $4$-valent and therefore has twice as many edges as vertices. It follows that \[ \chi(S_1) = -3= i(u,v) - 2 i (u,v) + D, \] where $D \geq 0$ is unknown. Hence, $3 \leq i(u,v)$, a contradiction. So $S_1$ is a $4$-holed sphere with two boundary components corresponding to boundary components of $S$ and two boundary components, $b_1$ and $b_2$ corresponding to boundaries of $S_2 = S \setminus S_1$. The curves $b_1$ and $b_2$ are necessarily nonseparating as shown in the proof of Lemma~\ref{lemma:leq3}.

\begin{figure}[htb!]
\centering
\def\svgwidth{4.25in}
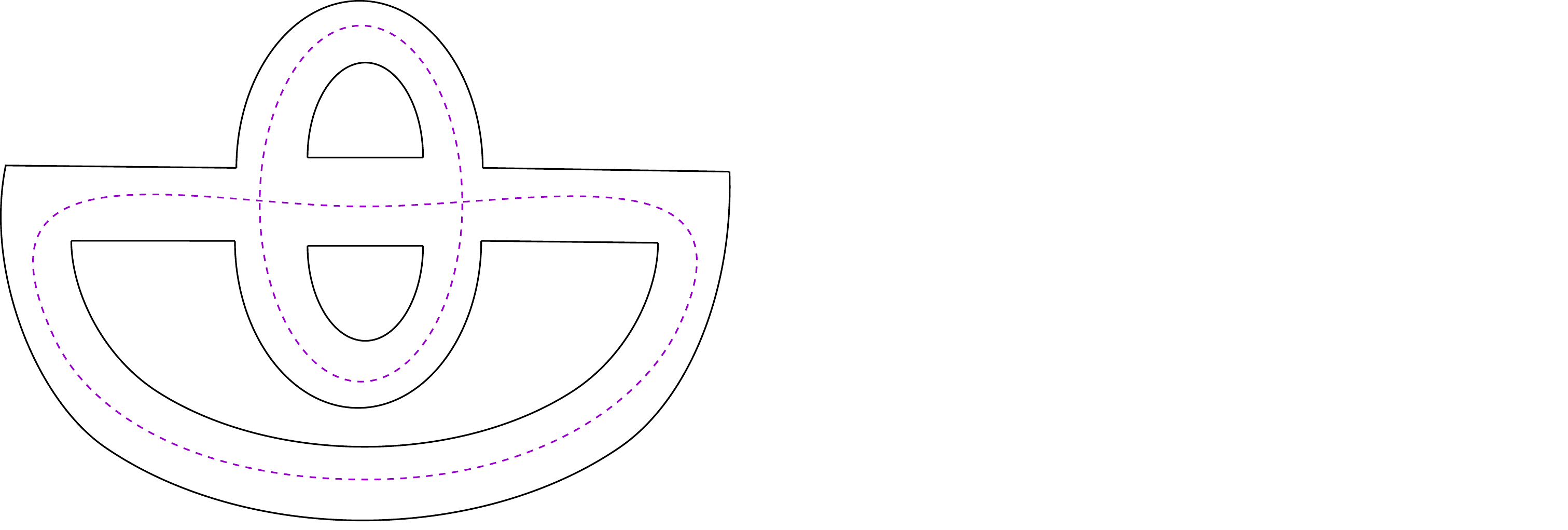
\caption{The possible configurations for $u,v,$ and $S_1$ with $u$ and $v$ shown in dotted lines. On the left, $S_1$ is a sphere with $4$ boundary components. On the right, $S_1$ is a torus with $3$ boundary components, which can be ruled out by Euler characteristic considerations.}
\label{fig:Proposition5.2-u-v-configs}
\end{figure}

%Since $(g,p) \neq (0,5)$ or $(1, 3)$, the complementary subsurface $S_{2}= S \setminus S_{1}$ is not a $3$-holed sphere, and thus the diameter of $\mathcal{AC}_1(S_{2})$ is infinite. 

We will construct a pair of curves $\alpha$ and $\beta$ so that any path from one to the other such that each vertex projects non-trivially to $S_{2}$ must be longer than $3$. On the other hand, $b_{1}, b_{2}$ are the only two curves in $L(u,v)$ that do not project to $S_{2}$. Note that $\gamma^{\pm}$ are \textit{not} in $L(u,v)$ since both intersect $u$ twice and if either one of them is separating, it can not bound a $3$-holed sphere on either of its sides by the topological assumptions made on $S$ in the statement of the lemma. 

Let $\alpha$ be a $2$-curve in configuration (iii) and let $\alpha \cap S_2 = \{\eta, \eta' \}$. Letting $\rho$ denote a  homeomorphism $S_2$ taking $b_1$ to $b_2$, let $\beta = \rho(\alpha)$. Therefore, both $\beta$ and $\alpha$ are non-separating; $\alpha$ (respectively $\beta$) intersects $b_{1}$ (respectively $b_{2}$) three times; and $\alpha$ (respectively $\beta$) intersects $b_{2}$ (respectively $b_{1}$) exactly once. 

We claim there is $K>0$ so that if 
\[ d_{\mathcal{AC}(S_{2})}(\pi_{S_2}(\alpha), \pi_{S_2}(\beta)) > K, \]
then if $\Gamma= \left\{\alpha, v_1,..., v_{n}, \beta \right\}$ is any path in $L(u,v)$ so that $v_{i}$ projects to $S_2$ non-trivially, then $n > 2$. Indeed, $i(v_{j}, v_{j+1}) \leq 2$ and thus $\Gamma$ may not be a path in  $\mathcal{AC}(S_{2})$, but it constitutes a sequence that makes uniformly bounded jumps. The existence of $\Gamma$ therefore implies an upper bound on the distance in $\mathcal{AC}(S_{2})$ between the projections of $\alpha$ and $\beta$ that depends only on $n$. 

Now, assume that the projections of $\alpha, \beta$ to $S_{2}$ are at an $\mathcal{AC}(S_{2})$ distance of at least $K$, where $K$ is as above. It follows that a shortest path from $\alpha$ to $\beta$ must pass through at least one of $b_{1}, b_{2}$ since the $L(u,v)$ distance between them is no more than $3$. Since $d_{L}(\alpha, b_{1})$ and $d_{L}(\beta, b_{2})$ are both at least $2$, a shortest path from $\alpha$ to $\beta$ can not have length $2$, and therefore must be of length $3$.  \end{proof}

In the context of the previous lemma, the curves $\left\{b_1, b_2 \right\}$ play the role of a \textit{shortcut set}, a notion we will introduce formally in Section \ref{section:k-curve}. Furthermore, the proof of Lemma \ref{k jointly separating} applies in our setting here exactly as written, and shows: 

\begin{lemma} \label{no shortcut} If $(u,v)$ is a $0$-edge such that both $u$ and $v$ are non-separating, then the diameter of $L(u,v)$ is either infinite, or it is $3$. In the latter case, there can not exist a pair of curves $b_1,b_2$ in $L(u,v)$ so that any shortest path of length $3$ in $L(u,v)$ passes through either $b_1$ or $b_2$.  
\end{lemma}

\subsection{Proof of Schaller's conjecture.}

We are now ready prove Conjecture~\ref{SchallerC}.

%The only edges which are not yet distinguished by the diameter of their links are $0$-edges involving two nonseparating curves and $2$-edges involving one separating and one nonseparating curve. 

\vspace{2 mm}

\noindent \textbf{Theorem \ref{Schaller}} \textit{The automorphism group of $\mathcal{SC}(S)$ is isomorphic to $\Mod^\pm(S).$}

\vspace{2 mm}

%Before we begin the proof, we remark that while the results of Section $3$, were stated for $\mathcal{N}_1(S)$, they imply the corresponding results for $\mathcal{SC}(S)$. Indeed, if $i(\alpha, \beta)= 1$, both $\alpha$ and $\beta$ are necessarily non-separating. Therefore, the diameter of the link of a $1$-edge in $\mathcal{SC}(S)$ is exactly $4$. By Proposition~\ref{lemma:summarydiam=4}, the link of any $2$-edge also has diameter exactly $4$ so long as $g \geq 2$. 

\begin{proof} We begin by showing that automorphisms of $\mathcal{SC}(S)$ preserve $0$-edges. Let $(u,v)$ be a $0$-edge, and assume first that $S$ has genus at least $1$.

If $u$ and $v$ are both nonseparating, either $\diam(L(u,v)) = \infty$ or $\diam(L(u,v)) = 3$ and in this case, $L(u,v)$ does not possess a shortcut set-- curves for which any shortest path of length $3$ must pass through at least one of them. So, $(u,v)$ cannot be sent to a $1$-edge, since links of $1$-edges have diameter $4$, nor can it be sent to a 2-edge (since links of $2$-edges either have diameter $4$ or diameter $3$ and possess the curves $b_1, b_2$). 

If one or both of $u$ and $v$ are separating, then $\diam(L(u,v)) = \infty$. So again, $(u,v)$ cannot be sent to a $1$- or $2$-edge. 

If $S$ has genus $0$, then all curves are separating and we only need to distinguish between $0$-edges and $2$-edges. Note that links of $0$-edges have infinite diameter and links of $2$-edges (with both curves separating) have diameter at most $4$. Thus, automorphisms of $\mathcal{SC}(S)$ preserve $0$-edges, as desired. 

To complete the proof it remains to show that any automorphism $\psi$ of the subgraph of $\mathcal{SC}(S)$ consisting of all vertices but only $0$-edges is induced by a mapping class. If $g = 0$, this amounts to saying that the graph with vertices corresponding to separating curves that bound a $3$-holed sphere on one side and with edges corresponding to disjointness, has automorphism group isomorphic to the extended mapping class group. This follows readily from McLeay's extension \cite{McLeay} of Brendle--Margalit's work on \textit{complexes of regions} \cite{BrendleMargalit}, and we outline the idea as follows. 
A \textit{complex of regions} is a simplicial complex whose vertices are essential subsurfaces chosen from some specific subset of mapping class group orbits of all subsurfaces, and edges are determined by disjointness. When $g=0$, we can interpret the subgraph of $\mathcal{SC}(S)$ corresponding only to $0$-edges as a complex of regions, where each vertex represents a $3$-holed sphere. Theorem $2$ of \cite{McLeay} states that the automorphism group of a complex of regions is isomorphic to the extended mapping class group when every \textit{minimal} vertex (roughly speaking, a subsurface which does not properly contain another subsurface representing a vertex of that complex of regions) is \textit{small}, a technical assumption that amounts to saying that the topology of the entire surface is sufficiently complicated relative to the topology of the subsurface. In our context, every vertex will be minimal, and every vertex will be small so long as $p \geq 7$. The only remaining genus $0$ case is $p=6$; since the argument involves ideas from \cite{McLeay} that are not relevant to the rest of the remaining cases of Theorem \ref{Schaller}, we cover this in Appendix \ref{p6}.

If $g \neq 0$, then $\psi$ induces an automorphism of $\mathcal{N}(S)$. Indeed, any non-separating curve is involved in a $0$-edge of diameter $3$, but a $0$-edge involving a separating curve has infinite diameter. Thus there is a mapping class $f$ so that $\psi$ coincides with $f$ when restricted to the non-separating vertices. Consider $\phi \circ f^{-1}$. This is an automorphism of $\mathcal{SC}(S)$ which pointwise fixes each vertex corresponding to a non-separating curve. 

Let $u$ be a vertex of $\mathcal{SC}(S)$ corresponding to a separating curve, let $v = \phi \circ f^{-1}(u)$, and suppose $v \neq u$. 

\medskip

\noindent {\bf Claim:} There exists a nonseparating curve $\gamma$ that is disjoint from $u$ and intersects $v$ at least $3$ times. 

\begin{proof}[Proof of Claim:]

Suppose that $u$ and $v$ are disjoint. Note that since $S$ has genus at least $1$, then the complement of $u$ containing $v$ has genus at least $1$. So there exists a nonseparating curve $\gamma$ disjoint from $u$ which intersects $v$ arbitrarily many times.

Suppose $u$ and $v$ intersect. Once again, the component of the complement of $u$ which is not a $3$-holed sphere, call it $S'$, has genus at least $1$. Note that $v \cap S'$ is an essential multi-arc, call it $\nu$. We can then pick any component of $\nu$ and find a nonseparating curve $\gamma$ in $S'$ which intersects it arbitrarily many times. \end{proof}

%Cut open surface along $u$ and look at the multi-arc $\nu$ coming from $v = \phi \circ f^{-1}(u)$. Build the nonseparating curve $\gamma$ that

By the claim there exists a nonseparating curve $\gamma$ that is distance $1$ from $u$ but distance $> 1$ from $v$. This is a contradiction since $\phi \circ f^{-1}(\gamma) = \gamma$. Thus, $u=v$, and therefore $\phi \circ f^{-1}$ is the identity on the full vertex set of $\mathcal{SC}(S)$, as desired.  \end{proof}

\section{The k-Curve Graph}
\label{section:k-curve}

We are now ready to show that the automorphism group of the k-curve graph is the extended mapping class group for $|\chi(S)|$ sufficiently large with respect to $k$. Throughout this section we will assume that $S$ is a connected, oriented surface with negative Euler characteristic. As before, we call edges in $\mathcal{C}_k(S)$ that connect vertices admitting disjoint representatives \textit{0-edges}. We call all other edges \textit{non-zero edges}. Distinguishing between $0$-edges and non-zero edges in $\mathcal{C}_k(S)$ is a more delicate process than distinguishing between $0$ and $1$-edges in $\N_1(S)$ and $\mathcal{SC}(S)$. In addition to the diameter we will also record two other properties of the edge links. First, we will consider the cardinality of the edge links, namely whether the edge link contains a finite or infinite number of vertices. Second, we will define a finite collection of curves associated to an edge link called a \textit{shortcut set}, whose existence (or nonexistence) will be our final tool for distinguishing between edge links. Throughout this section, we assume $S$ satisfies $|\chi(S)| \geq k + 512$ unless specified otherwise. See the appendix (particularly Remark \ref{512}) for an explanation of the relevance of this inequality.

\begin{figure}[htb!]
\centering
\def\svgwidth{2in}
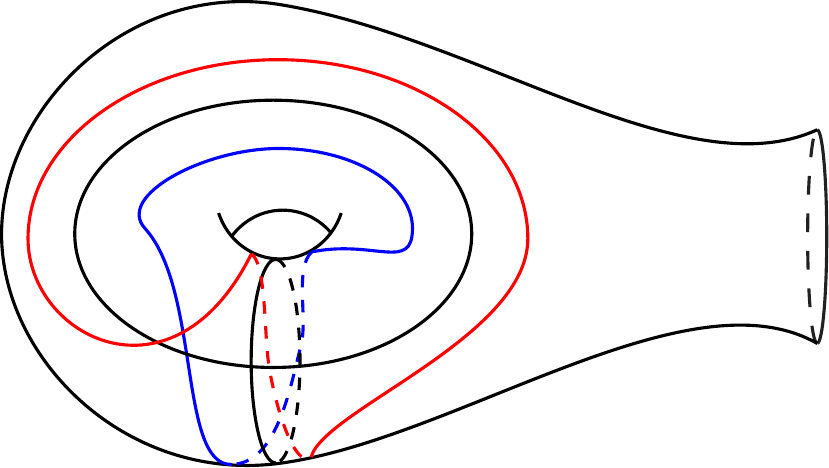
\caption{A shortcut set is a generalization of $\gamma^{\pm}$ in the case $k=1$. Indeed, when $i(u,v) =1$, then $\diam(L(u,v)) = 4$, and as demonstrated in Lemma \ref{leq4} there is a path of length at most $4$ between them that passes through $\{\gamma^+, \gamma^-\}$.}
\label{fig:Section6-shortcut-set}
\end{figure}

The goal of this section will be to prove the partition of edge types shown in Table \ref{table:distinguish} based on the three characteristics that we have just outlined. 

\begin{table}[htb!]
\centering
    \begin{tabular}{|l|c|c|c|}
    \hline
      Edge-type of $(u,v)$ & $|L(u,v)|$ & $\diam(L(u,v))$ & Shortcut Set \\
    \hline
    \hline
       $0$-edge, $u, v$ jointly non-separating  &  $\infty$ & $\infty$ & N/A \\
    \hline
        $0$-edge, $u, v$ jointly separating & & &\\
    \hline
        \enspace i) No component of $S\setminus (u\cup v)$ is a 3-holed sphere & $\infty$ & 3 & No \\
    \hline
        \enspace ii) A component of $S\setminus (u\cup v)$ is a 3-holed sphere & $\infty$ & $\infty$ & N/A\\
    \hline
       non-zero edge, $u,v$ not filling  &  $\infty$ & $1, 2, \text{ or } 4$ & N/A\\
    \hline
       non-zero edge, $u,v$ not filling &  $\infty$ & 3 & Yes \\
    \hline
    \end{tabular}
    \bigskip
    \caption{Strategy for distinguishing between $0$-edges and non-zero edges. N/A indicates that the existence of a shortcut set was not checked for these edge links. Note that we do not include the case of a non-zero edge with $u,v$ filling since if $i(u,v) \leq k$ and $|\chi(S)| \geq k + 512$, then they necessarily do not fill.}
    \label{table:distinguish}
\end{table}

\subsection{Diameter and cardinality of links.} We will now compute the diameters of various types of edges. Let $(u,v)$ be an edge in $\mathcal{C}_k(S)$. We will begin by considering the case when $u$ and $v$ are a filling pair.

\begin{lemma}\label{fillwholesurface}
If $(u,v)$ is an edge in $\mathcal C_k(S)$ such that $u \cup v$ fills $S$, then there are finitely many vertices in $L(u,v)$. 
\end{lemma}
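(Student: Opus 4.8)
The plan is to show that $L(u,v)$ is finite by bounding the geometric intersection number of any vertex $\alpha \in L(u,v)$ with a fixed filling system, and then invoking the finiteness of the set of curves with bounded intersection number with a filling pair. Since $u \cup v$ fills $S$, the complement $S \setminus (u \cup v)$ is a union of disks and once-punctured disks, so $u \cup v$ gives $S$ the structure of a CW complex (after resolving the $i(u,v)$ intersection points into $4$-valent vertices). First I would observe that any curve $\alpha$ with $i(\alpha,u) \le k$ and $i(\alpha,v) \le k$ can be homotoped so that $\alpha$ crosses the $1$-skeleton $u \cup v$ at most $2k$ times in total; hence $\alpha \cap (S \setminus (u \cup v))$ consists of at most $k$ arcs, each embedded in a complementary disk or once-punctured disk.

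The key point is that the isotopy class of $\alpha$ is determined by finitely much combinatorial data once we know how $\alpha$ sits relative to $u \cup v$. Concretely, each complementary region is a disk or once-punctured disk with a bounded number of sides (bounded in terms of $|\chi(S)|$, which is fixed), and an embedded arc in such a region is determined up to isotopy (rel the $1$-skeleton) by its pair of endpoints among the finitely many edges of the region — with a finite, bounded ambiguity in the once-punctured case depending on which side of the puncture the arc runs. Since $\alpha$ meets $u \cup v$ in at most $2k$ points, distributed among finitely many edges in finitely many ways, and since each such crossing pattern together with the finite choices in each complementary region determines at most finitely many isotopy classes of $\alpha$, there are only finitely many possibilities for $[\alpha]$. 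Therefore $L(u,v)$ is finite.

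Alternatively, and perhaps more cleanly, I would phrase this via the standard fact that for any filling pair $(u,v)$ and any constant $C$, the set $\{\gamma : i(\gamma,u) \le C,\ i(\gamma,v) \le C\}$ is finite; this follows because $i(\gamma, u \cup v)$ is bounded, and a filling multicurve has the property that only finitely many isotopy classes of simple closed curves have bounded intersection with it (this is essentially because the intersection data with a filling set, recorded combinatorially, determines the curve up to finite ambiguity, and there are finitely many bounded data sets). Taking $C = k$ gives the claim, since every $\alpha \in L(u,v)$ satisfies exactly these bounds.

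The main obstacle I anticipate is making rigorous the claim that bounded combinatorial intersection data with a filling pair determines a curve up to finitely many isotopy classes — in particular, handling the once-punctured complementary disks, where an arc can wind around the puncture, requires care to see that no infinite family arises (it does not, because an embedded arc in a once-punctured disk with fixed endpoints on the boundary has only two isotopy classes, according to which side of the puncture it passes). Once that local finiteness in each complementary region is established, the global count is a straightforward finite product, and the lemma follows.
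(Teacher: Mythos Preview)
Your proof is correct and takes essentially the same approach as the paper: both encode a curve in $L(u,v)$ by the bounded combinatorial data of how it crosses the polygonal decomposition of $S$ induced by the filling pair $u \cup v$, and then observe that there are only finitely many such data sets. (One minor slip: $2k$ crossing points on the circle $\alpha$ cut it into $2k$ complementary arcs, not $k$, but this has no effect on the finiteness conclusion.)
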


\begin{proof} Since $u$ and $v$ fill, their union gives rise to the $1$-skeleton of a polygonal decomposition of $S$ where some polygons may be once-punctured. Any other essential curve $\gamma$ can be isotoped to be in minimal position with respect to $u \cup v$ and so it defines an equivalence class of cyclically ordered sequences each of length~$i(\gamma, u \cup v)$; one simply reads off the edges of the polygonal decomposition in accordance with the order in which $\gamma$ meets them. However, this does not yield a uniquely defined cyclic sequence because $\gamma$ can be homotoped over a vertex of one polygon and into another. We will consider any two sequences related in this way to be equivalent. 

There are at most finitely many sequences of edges in the polygonal decomposition of length at most $k$, and therefore there are at most finitely many (equivalence classes of) cyclic sequences of length at most $k$. This implies that there are at most finitely many curves which intersect both $u$ and $v$ at most $k$ times. \end{proof}

We will next consider the links of non-zero edges $(u,v)$ when $u$ and $v$ are not a filling pair. 

\begin{lemma}\label{k-edge finite}
If $(u,v)$ is a non-zero edge in $\mathcal{C}_k(S)$ such that $u$ and $v$ do not fill $S$, then \[\diam(L(u,v))\leq 4.\]
\end{lemma}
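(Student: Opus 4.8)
Since $u\cup v$ does not fill $S$, the complement $S\setminus(u\cup v)$ contains an essential subsurface, and in particular there is an essential simple closed curve disjoint from both $u$ and $v$; let $S_0 = \overline{S \setminus (u\cup v)}$ and note $S_0$ may be disconnected. The plan is to produce, as in the proof of Lemma \ref{leq4}, a small ``core'' of curves in $L(u,v)$ through which every other vertex of $L(u,v)$ can be reached in a bounded number of steps. First I would fix a curve $c_0$ disjoint from $u \cup v$ (so $c_0 \in L(u,v)$ and $c_0$ lies in some component $Y$ of $S_0$); I will show every $\alpha \in L(u,v)$ satisfies $d_{L}(\alpha, c_0) \le 2$, which immediately gives $\diam(L(u,v)) \le 4$.

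The key step is the reduction: given $\alpha \in L(u,v)$ with $i(\alpha,u), i(\alpha,v) \le k$, I want a curve $\beta$ disjoint from $\alpha$ and disjoint from $u \cup v$ — then $\beta \in L(u,v)$, $d_L(\alpha,\beta)=1$, and $d_L(\beta, c_0) \le 1$ (both lie in $S_0$, which contains the extra curve $c_0$... but one must be careful that $\beta$ and $c_0$ can be taken disjoint). Concretely: put $\alpha$ in minimal position with $u \cup v$; since $\alpha$ does not fill together with $u\cup v$ (the union of all three still has complementary essential subsurface, because $\alpha$ is a single curve added to a non-filling pair and $|\chi(S)|$ is large), the complement $\overline{S \setminus (u \cup v \cup \alpha)}$ still contains an essential curve $\beta$. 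This $\beta$ is disjoint from $u$, $v$, and $\alpha$ simultaneously, so $d_L(\alpha, \beta) = 1$. Then both $\beta$ and $c_0$ lie in $S_0 = \overline{S\setminus(u\cup v)}$; if $S_0$ is connected and not too small one can connect $\beta$ to $c_0$ through a common disjoint curve, giving $d_L(\beta, c_0)\le 2$ and hence $d_L(\alpha, c_0)\le 3$; being slightly more careful (choosing $c_0$ and $\beta$ in the same component, or replacing $c_0$ by an arbitrary vertex of $L(u,v)\cap C(S_0)$) one gets $\le 2$ as in Lemma \ref{leq4}, and in any case $\diam(L(u,v)) \le 4$ follows by the triangle inequality since any two vertices are within $2$ of the fixed core.

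The main obstacle I anticipate is the bookkeeping around small or disconnected complementary subsurfaces: $S\setminus(u\cup v)$ could have components that are three-holed spheres or once-punctured annuli carrying no essential curves, so the claim ``there is a curve disjoint from $u,v,\alpha$'' needs the hypothesis $|\chi(S)| \ge k + 512$ (or at least that $S$ is large enough) to guarantee that after removing $\alpha$ as well there is still room — this is exactly the sort of Euler-characteristic counting the appendix handles, and I would invoke it rather than reprove it. A secondary subtlety is that when $S \setminus (u \cup v)$ has two components, $\alpha$ might meet both, so $\beta$ must be chosen in the complement of $\alpha$ within whichever component still contains an essential curve after cutting along $\alpha \cap S_0$; since $\alpha \cap S_0$ is a union of at most $i(\alpha, u\cup v)/2 \le k$ disjoint arcs, the surface one of those components minus these arcs is still large by the standing hypothesis, so an essential curve $\beta$ there exists. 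Once $\beta$ exists the rest is the routine distance estimate above.
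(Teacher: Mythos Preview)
Your approach diverges from the paper's in a way that creates a genuine gap. The paper does \emph{not} work with $S_0=\overline{S\setminus(u\cup v)}$; it works with the subsurface $F(u,v)$ \emph{filled} by $u\cup v$ and its complement. The fixed core curve $\gamma$ is chosen \emph{inside} $F(u,v)$ (obtained by surgery on $u,v$), not outside. This matters because any curve $\eta\subset S\setminus F(u,v)$ is then automatically disjoint from $\gamma$, so $d_L(\eta,\gamma)=1$ for free. Your core $c_0$ sits in the complement, so two curves $\beta,c_0$ both living in $S_0$ need not be disjoint or even intersect at most $k$ times; your claim $d_L(\beta,c_0)\le 1$ (or $\le 2$) is unjustified, and replacing $c_0$ ``by an arbitrary vertex'' depending on $\alpha$ no longer gives a fixed center, so the triangle inequality does not yield $\diam\le 4$.

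More seriously, your key step---producing $\beta$ disjoint from all three of $u,v,\alpha$---can fail under the standing hypothesis. If $u\cup v\cup\alpha$ fills $S$, no such $\beta$ exists, and filling only forces $|\chi(S)|\le i(u,v)+i(u,\alpha)+i(v,\alpha)\le 3k$; for $k\ge 256$ this is compatible with $|\chi(S)|\ge k+512$. The appendix lemma you propose to invoke (Lemma~\ref{multi-arcint}) does \emph{not} produce a curve disjoint from $\alpha$: it produces $\eta\subset S\setminus F(u,v)$ with $i(\alpha,\eta)\le k$, which is exactly enough for an edge in $\mathcal{C}_k(S)$ but not for disjointness. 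The paper's argument is built precisely around this weaker conclusion: since $|\chi(F(u,v))|=i(u,v)\le k$, one has $|\chi(S\setminus F(u,v))|\ge 512$, Lemma~\ref{multi-arcint} gives $\eta$ with $i(\alpha,\eta)\le k$, and then $\alpha$--$\eta$--$\gamma$ is a path of length $2$ because $\eta$ and $\gamma$ live in complementary subsurfaces. Your outline needs to be reorganized around $F(u,v)$ rather than $S\setminus(u\cup v)$, and around bounded intersection rather than disjointness.
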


\begin{proof}
Let $\alpha \in L(u,v)$ and let $F(u,v)$ be the subsurface of $S$ filled by $u$ and $v$ (potentially with some complementary disks glued back in so as to make $F$ essential). Note that $u \cup v$ can be thought of as a $4$-valent graph $\Gamma$ with vertices in $u \cap v$ and edges given by arcs of either $u$ or $v$ running between intersection points. Note that $\Gamma$ has exactly twice as many edges as vertices. Since $F(u,v)$ is a thickening of $\Gamma$, we get
\[ \chi(F(u,v)) = - i (u,v).\] 

It follows that $|\chi(F(u,v))| \leq k,$ and hence
\begin{equation} \label{cardinality}
|\chi(S \setminus F(u,v))| \geq 512.
\end{equation}

There exists $\gamma \in L(u,v)$ such that $\gamma \subset F(u,v)$. This can be seen, for example, by surgering along intersections of $u$ and $v$ as in Hempel's argument (see \cite[Lemma 2.1]{Hempel}). If $\alpha \subset S\setminus F(u,v)$, then $d_L(\alpha, \gamma)=1$. Otherwise, if $\alpha \subset F(u,v),$ there exists a simple closed curve $\beta\subset S\setminus F(u,v)$ so that $\beta \in L(u,v)$, since $u$ and $v$ do not fill $S$. This yields a path between $\alpha$ and $\gamma$ in $L(u,v)$ of length 2. Hence $d_L(\alpha, \gamma)\leq 2$.

Lastly, if $\alpha$ nontrivially intersects both $F(u,v)$ and its complement, then we consider the multi-arc formed by $\alpha \cap \big(S\setminus F(u,v)\big)$. Abusing notation slightly, we will denote this multi-arc by $\alpha$. Lemma \ref{multi-arcint} implies that there exists some essential simple closed curve $\eta \subset S\setminus F(u,v)$ such that $i(\alpha, \eta)\leq k.$ Thus, $d_L(\alpha, \eta)=1$ and so $d_L(\alpha, \gamma) \leq 2$, since $d_L(\eta, \gamma) = 1$. 

It follows from the above cases that the diameter of $L(u,v)$ is at most 4. \end{proof}

Next we consider the diameter of a $0$-edge $(u,v)$ when $u$ and $v$ are jointly non-separating. 

\begin{lemma} \label{lemma:jnsep}
Let $u, v \in \mathcal{C}_k(S)$. If $(u,v)$ is a $0$-edge such that $u$ and $v$ are jointly non-separating, then $L(u,v)$ has infinite diameter.
\end{lemma}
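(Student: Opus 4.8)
The plan is to mirror the strategy of Proposition \ref{proposition:diam-inf}: since $u$ and $v$ are jointly non-separating, the complementary surface $S' = S \setminus (u \cup v)$ is connected, and because $|\chi(S)| \geq k + 512$ it is far from being a sphere with few holes, so $\mathcal{AC}_1(S')$ (equivalently $\mathcal{AC}(S')$, via Proposition \ref{qi}) has infinite diameter; moreover, by Remark \ref{loxodromic}, a pseudo-Anosov $\phi \in \Mod(S')$ has unbounded orbits in $\mathcal{AC}_1(S')$. The key point to exploit is that \emph{every} essential simple closed curve $\gamma$ on $S$ has nontrivial subsurface projection to $S'$: indeed $u$ and $v$ do not fill $S$ (their complement contains the nontrivial surface $S'$), so any $\gamma$ either lies in $S'$ or crosses $\partial S'$, and in either case $\pi_{S'}(\gamma) \neq \emptyset$.

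The steps, in order: (1) Fix a curve $\lambda_u$ crossing $u$ once, so $\lambda_u$ projects nontrivially to $S'$ and lies in $L(u,v)$ after an isotopy making it disjoint from $v$ — or more carefully, pick any $\lambda \in \mathcal{C}_k(S)$ adjacent to both $u$ and $v$ whose projection to $S'$ we control; for concreteness take an arc $a$ in $S'$ and cap it off across $u$ to get a simple closed curve $\lambda$ with $i(\lambda,u) \leq 1$, $i(\lambda, v) = 0$. (2) Choose a pseudo-Anosov $\phi \in \Mod(S')$. Given $n \in \mathbb{N}$, Remark \ref{loxodromic} supplies $N$ with $d_{\mathcal{AC}_1(S')}(\pi_{S'}\lambda, \phi^N \pi_{S'} \lambda) > 4n$, say. (3) Let $\lambda_N$ be the simple closed curve on $S$ obtained from $\phi^N(\pi_{S'}\lambda)$ by adjoining its (at most $k$, in fact at most one) intersection arc with $u$ so that $\lambda_N \in L(u,v)$; note $\phi$ fixing a neighborhood of $\partial S'$ means $\lambda_N$ still meets $v$ trivially. (4) Given any edge path $\alpha = w_0, w_1, \dots, w_m = \lambda_N$ inside $L(u,v)$, each $w_i$ projects nontrivially to $S'$, and consecutive $w_i$ are within $\mathcal{C}_k(S)$-distance $1$, hence within bounded $\mathcal{AC}_1(S')$-distance of each other under $\pi_{S'}$ (the Behrstock-type / diameter bound: curves crossing at most $k$ times have projections of bounded diameter — here I would cite the appropriate bounded-projection estimate, or simply use that $i(w_i, w_{i+1}) \leq k$ forces $d_{S'}(w_i, w_{i+1})$ to be bounded by a constant depending only on $k$). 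Concatenating yields a path in $\mathcal{AC}_1(S')$ from $\pi_{S'}\lambda$ to $\pi_{S'}\lambda_N$ of length $O(m)$, forcing $m$ to grow with $n$. Letting $n \to \infty$ shows $\diam L(u,v) = \infty$.

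The main obstacle is step (4): controlling how far apart $\pi_{S'}(w_i)$ and $\pi_{S'}(w_{i+1})$ can be when $i(w_i, w_{i+1}) \leq k$ rather than $=0$. For disjoint curves the projection-diameter bound is the standard $\leq 2$ (or $\leq 3$) estimate of Masur--Minsky; for curves crossing $k$ times one needs the coarser bound that $d_{S'}(w_i, w_{i+1})$ is at most some explicit function of $k$ (roughly $\log k$ or a linear bound, depending on the reference). This is exactly the kind of bounded-projection lemma that the paper sets up in its preliminaries on subsurface projections, so I would invoke it directly; alternatively, one can bypass it by noting that each $w_i$ crosses $u$ and $v$ at most $k$ times, hence $w_i \cap S'$ is a multi-arc of bounded complexity, and passing between consecutive ones changes the arc system in a controlled way. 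Either way, the rest of the argument is a routine adaptation of the $\N_1(S)$ case, the only genuinely new input being that the hypothesis $|\chi(S)| \geq k + 512$ guarantees $S'$ is large enough to support a pseudo-Anosov with unbounded arc-complex orbit.
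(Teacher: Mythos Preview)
Your overall strategy is correct and matches the paper's: project $L(u,v)$ to an arc-and-curve graph of $S' = S \setminus (u \cup v)$, then use a pseudo-Anosov on $S'$ to produce vertices in $L(u,v)$ whose projections are arbitrarily far apart. However, the paper dissolves what you flag as the ``main obstacle'' in step (4) with one clean move: it projects to $\mathcal{AC}_k(S')$ rather than to $\mathcal{AC}_1(S')$ or $\mathcal{AC}(S')$. If $w_i, w_{i+1}$ are adjacent in $L(u,v)$ then $i(w_i, w_{i+1}) \leq k$, so their restrictions to $S'$ also intersect at most $k$ times and are therefore \emph{adjacent} in $\mathcal{AC}_k(S')$; the map $\tau : L(u,v) \to \mathcal{AC}_k(S')$ is then $1$-Lipschitz on the nose, with no $k$-dependent constant to track. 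The infinite-diameter conclusion then comes from Proposition~\ref{lemma:ACkqi}, which establishes $\mathcal{AC}_k(S') \qi \mathcal{AC}(S')$ (this is the bounded-projection input you were reaching for, packaged once and for all). Your route via a $k$-dependent Lipschitz bound into $\mathcal{AC}(S')$ works too, but is the same argument with the quasi-isometry applied edge-by-edge rather than globally.

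A second simplification in the paper: rather than building $\lambda$ by capping an arc across $u$, it simply takes a curve $\gamma$ lying entirely in $S'$ (hence trivially in $L(u,v)$) and pushes it around by $\phi$. Since $\phi$ is supported on $S'$, all iterates $\phi^n(\gamma)$ remain disjoint from $u$ and $v$, so no re-gluing step is needed.
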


In order to prove Lemma \ref{lemma:jnsep} we first need to establish the following quasi-isometry between $\mathcal{AC}(S)$ and $\mathcal{AC}_k(S)$.

\begin{proposition}\label{lemma:ACkqi}
Let $S$ be a surface of genus at least 2. Consider $\mathcal{AC}_k(S)$, the graph with the same vertex set as $\mathcal{AC}(S)$ and with edges connecting arcs and curves that intersect essentially at most $k$ times. Then $\mathcal{AC}(S) \qi \mathcal{AC}_k(S)$.
\end{proposition}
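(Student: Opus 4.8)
The plan is to show that the identity map on vertices gives a quasi-isometry $\mathcal{AC}(S) \to \mathcal{AC}_k(S)$ by bounding distances in each direction. One inclusion is immediate: since every edge of $\mathcal{AC}(S)$ (a $0$-edge) is also an edge of $\mathcal{AC}_k(S)$, we have $d_{\mathcal{AC}_k}(u,v) \leq d_{\mathcal{AC}}(u,v)$ for all vertices $u,v$. The content is therefore the reverse inequality: a linear (in fact, uniformly coarse-Lipschitz) upper bound $d_{\mathcal{AC}}(u,v) \leq C \cdot d_{\mathcal{AC}_k}(u,v) + C$. As in the proof of Proposition \ref{qi}, it suffices to treat a single non-zero edge $(\rho, \eta)$ of $\mathcal{AC}_k(S)$, i.e. a pair of arcs/curves with $1 \leq i(\rho,\eta) \leq k$, and produce a path of uniformly bounded length in $\mathcal{AC}(S)$ between them; concatenating these bounds over a geodesic in $\mathcal{AC}_k(S)$ then gives the result with a multiplicative constant equal to that uniform bound.

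First I would reduce to the claim: there is a constant $D = D(S)$ (which we may take independent of $\rho,\eta$, depending only on the topological type of $S$) so that any two essential arcs or curves intersecting at most $k$ times are joined by a path of length at most $D$ in $\mathcal{AC}(S)$. To prove this claim I would use the standard surgery technology already invoked in the paper: given $\rho$ and $\eta$ with $i(\rho,\eta) \le k$, a Lickorish/Hempel-style surgery along an intersection point of $\eta$ with $\rho$ produces an essential arc or curve $\delta_1$ with $i(\delta_1,\rho) = 0$ and $i(\delta_1,\eta) \le \lceil i(\rho,\eta)/2 \rceil$ (roughly halving the intersection with $\eta$ while becoming disjoint from $\rho$); one must check essentiality and that we do not leave the vertex set of $\mathcal{AC}(S)$, using the genus $\ge 2$ hypothesis to rule out the sporadic failures. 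Iterating this halving argument $O(\log k)$ times yields a sequence $\rho = \delta_0, \delta_1, \dots, \delta_m = $ (something disjoint from $\eta$) with each consecutive pair disjoint, hence $d_{\mathcal{AC}}(\rho,\eta) \le m+1 = O(\log k)$, which is a constant once $k$ and $S$ are fixed. This gives the desired $D$.

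With the claim in hand, the quasi-isometry constants fall out: if $\gamma = (u = w_0, w_1, \dots, w_\ell = v)$ is a geodesic in $\mathcal{AC}_k(S)$, then replacing each edge by an $\mathcal{AC}(S)$-path of length at most $D$ yields $d_{\mathcal{AC}}(u,v) \le D \cdot d_{\mathcal{AC}_k}(u,v)$. Combined with $d_{\mathcal{AC}_k} \le d_{\mathcal{AC}}$, the identity map is a $(D, D)$-quasi-isometry, and it is clearly coarsely surjective (indeed bijective on vertices). I would also note that both graphs have infinite diameter: $\mathcal{AC}(S)$ is quasi-isometric to $\mathcal{C}(S)$ by \cite{KorkmazPapa}, which has infinite diameter by \cite{MasurMinsky}, and the quasi-isometry just established transports this to $\mathcal{AC}_k(S)$.

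The main obstacle is making the surgery step robust: one needs to guarantee at each stage that the surgered arc/curve is \emph{essential} (not boundary-parallel, not bounding a disk or once-punctured disk) and genuinely lies in $V(\mathcal{AC}(S))$, and that the intersection-number bookkeeping $i(\delta_{j+1}, \eta) \le \lceil i(\delta_j,\eta)/2 \rceil$ holds with the claimed constant. This is where the genus $\ge 2$ assumption is used (it provides enough room that the surgeries can always be performed essentially, analogously to how $(g,p) \neq (0,3)$ was needed in Proposition \ref{qi}), and it is the place where one must be slightly careful rather than invoke a black box. Everything downstream — concatenation, the quasi-isometry estimate, infinite diameter — is then routine.
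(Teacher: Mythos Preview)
Your proposal is correct and follows essentially the same approach as the paper. Both arguments take the identity map on vertices, observe that $d_{\mathcal{AC}_k} \le d_{\mathcal{AC}}$ is immediate, and then invoke Hempel's surgery argument to replace each non-zero edge of $\mathcal{AC}_k(S)$ by a path of $0$-edges of length $O(\log k)$ (the paper states the explicit constant $2\log_2(k)+2$), yielding the reverse Lipschitz bound; the paper's proof is simply terser and does not spell out the iteration or the essentiality check you flag.
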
 

\begin{proof}
Let $\alpha$ and $\beta$ be two vertices in $\mathcal{AC}_k (S)$. We use $d_{\mathcal{AC} _k}(\cdot, \cdot)$ to denote the distance in $\mathcal{AC}_k (S)$ and $d_{\mathcal{AC}}(\cdot, \cdot)$ the distance in $\mathcal{AC} (S)$.  Let $\phi: \mathcal{AC}_k (S) \rightarrow \mathcal{AC} (S)$ be the identity map on the vertices. Since every edge in $\mathcal{AC} (S)$ is also present in $\mathcal{AC}_k (S)$, we have that $$d_{\mathcal{AC}_k}(\alpha, \beta) \leq d_{\mathcal{AC}}(\phi(\alpha), \phi(\beta))_.$$ 

On the other hand, for any $\alpha, \beta$ connected by a non-zero edge in $\mathcal{AC}_k(S)$, by surgering along the intersections of $\alpha$ and $\beta$ as in Hempel's argument (see \cite{Hempel} Lemma 2.1), there is a path between $\alpha$ and $\beta$ consisting of only $0$-edges with length at most $2 \log_2(k)+ 2$ in $\mathcal{AC}_k(S)$. This path is mapped bijectively into $\mathcal{AC}(S)$ by $\phi$. Thus, \[d_{\mathcal{AC}}(\phi(\alpha), \phi(\beta)) \leq (2 \log_2(k) + 2) \cdot d_{\mathcal{AC}_k}(\alpha, \beta).\qedhere\] \end{proof}

We can now prove Lemma \ref{lemma:jnsep}.

\begin{proof}[Proof of Lemma \ref{lemma:jnsep}]
Let $u$ and $v$ be jointly non-separating disjoint curves on $S,$ and consider $S'= S\setminus (u \cup v).$ By Propositions \ref{qi} and \ref{lemma:ACkqi}, we know that $\mathcal{AC}_k(S') \qi \mathcal{AC}(S') \qi C(S').$ So we can consider a coarsely well-defined projection \[\tau: L(u, v)\to \mathcal{AC}_k(S')\] defined as follows. If $\alpha\in L(u,v)$ and $i(\alpha,u)=i(\alpha,v)=0,$ then $\tau(\alpha)=\alpha\in \mathcal{AC}_{k}(S')$. Otherwise, send $\alpha$ to the multi-arc representing its intersection with $S'$, which is a simplex in $\mathcal{AC}_k(S)$. It follows from the definition of $\tau$ that 

\begin{equation} \label{link-arc-curve} d_L(\alpha,\beta) \geq d_{\mathcal{AC}_k(S')}(\tau(\alpha),\tau(\beta)).\end{equation}

Consider a non-separating curve $\gamma \in \mathcal{C}(S'),$ and let $\phi: S \to S$ be a map fixing $u$ and $v$ pointwise and which restricts to a pseudo-Anosov on $S'$. Then for any $N \in \mathbb{N}$, there exists $n$ so that \[d_{\mathcal{C}(S')}(\gamma,\phi ^n(\gamma))\geq N.\] Since $\mathcal{AC}_k(S')$ is quasi-isometric to $\mathcal{C}(S'),$ we can choose appropriate $n$ to make  $d_{\mathcal{AC}(S')}(\gamma, \phi^n(\gamma))$ in $\mathcal{AC}_k(S')$ arbitrarily large. By inequality \ref{link-arc-curve} above, the diameter of $L(u,v)$ is infinite. \end{proof}

\subsection{Shortcut sets.} We will now make precise the definition of a \textit{shortcut set}. We then use it to distinguish between the remaining cases.

\begin{definition} Given $L(u,v)$ with $\diam(L(u,v)) = R< \infty$, a \emph{shortcut set} for $L(u,v)$ is a finite set of curves $\{\gamma_0,\ldots,\gamma_n\}$ with the following properties:
\begin{enumerate}
\item $\gamma_i \in L(u,v)$ for all $i$ and,
\item given any $\alpha,\beta\in L(u,v)$ with $d_L(\alpha, \beta) = R$,  there exists a path of length R between $\alpha$ and $\beta$ that passes through at least one of the $\gamma_i$'s.
\end{enumerate} 
\end{definition}

We can now prove the assertion given in the fifth row of Table \ref{table:distinguish}.

\begin{proposition}\label{diam3}
Given $u, v \in \mathcal{C}_k (S)$ so that $u,v$ do not fill $S$, if $(u,v)$ is a non-zero edge with $\diam(L(u,v))=3$, then there exists a shortcut set for $L(u,v)$.
\end{proposition}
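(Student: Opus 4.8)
The plan is to produce an explicit shortcut set consisting of curves supported in the complement of $F(u,v)$, mimicking the role that $\{\gamma^+,\gamma^-\}$ played in Lemma~\ref{leq4} and the role of the disjoint curves in Lemma~\ref{proposition:diam-3}. First I would recall from the proof of Lemma~\ref{k-edge finite} that since $u,v$ do not fill, the complementary subsurface $W := S\setminus F(u,v)$ is nonempty and in fact $|\chi(W)|\geq 512$ by inequality~\eqref{cardinality}; in particular $W$ carries essential simple closed curves of $S$. Fix once and for all a multicurve $\{\gamma_0,\dots,\gamma_n\}$ in $W$ that cuts $W$ into pieces each of complexity so small that every essential arc or curve in $W$ is disjoint from, or intersects very few times (at most $k$ times), at least one $\gamma_i$ — concretely, take a pants-type decomposition of $W$ together with enough ``dual'' curves so that the $\gamma_i$ together fill $W$; then no arc or curve in $W$ can avoid all of them, and a surgery/counting argument bounds its intersection with some $\gamma_i$ by $k$ using $|\chi(W)|\geq 512 \geq k + \text{(small constant)}$. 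Each $\gamma_i$ lies in $W$, hence is disjoint from $u\cup v$, so $\gamma_i \in L(u,v)$, giving property (1).

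For property (2), suppose $\alpha,\beta\in L(u,v)$ realize the diameter, so $d_L(\alpha,\beta)=3$. I would run the case analysis from the proof of Lemma~\ref{k-edge finite}: each of $\alpha,\beta$ either lies in $F(u,v)$, lies in $W$, or meets both nontrivially. In every case one shows $d_L(\alpha,\{\gamma_0,\dots,\gamma_n\})\leq 1$ or at worst $\leq 2$. Indeed if $\alpha\subset W$ then $\alpha$ is disjoint from some $\gamma_i$ (as the $\gamma_i$ fill $W$, unless $\alpha$ is isotopic to one of them, in which case we are trivially done) — more carefully, $\alpha$ intersects some $\gamma_i$ at most $k$ times by the filling/counting property, so $d_L(\alpha,\gamma_i)=1$; if $\alpha$ meets both $F(u,v)$ and $W$, then $\alpha\cap W$ is a nonempty multi-arc in $W$ and Lemma~\ref{multi-arcint} applied inside $W$ (which is legitimate since $|\chi(W)|\geq 512$) produces a curve $\eta\subset W$ with $i(\alpha\cap W,\eta)\leq k$, and then $\eta$ in turn meets some $\gamma_i$ at most $k$ times, giving $d_L(\alpha,\gamma_i)\leq 2$; if $\alpha\subset F(u,v)$, then $\alpha$ is disjoint from every $\gamma_i\subset W$, so $d_L(\alpha,\gamma_i)=1$. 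Thus in all configurations, concatenating an $\alpha$-to-$\gamma_i$ segment, a $\gamma_i$-to-$\gamma_j$ segment of length at most $1$ (the $\gamma_i$ can be chosen so that for every pair the ``endpoints'' $\gamma_i,\gamma_j$ one actually lands on are adjacent, or one simply enlarges the finite set to include all needed intermediaries), and a $\gamma_j$-to-$\beta$ segment produces a path of length at most $3$ through a shortcut curve. Since $d_L(\alpha,\beta)$ is assumed to equal the diameter $3$, this path has length exactly $3$, as required.

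The main obstacle I anticipate is the bookkeeping in property (2): I need a path through a shortcut curve of length \emph{exactly} $3$ (not merely $\leq 3$), and I need to ensure that when both $\alpha$ and $\beta$ require a length-$2$ detour to reach the shortcut set (the ``meets both sides'' case), the total does not exceed $3$. This forces one to be careful about which sub-case of Lemma~\ref{k-edge finite} actually occurs when the diameter is realized: a diameter-$3$ pair cannot have both $\alpha,\beta\subset F(u,v)$ (they would be at distance $\leq 2$ via any $\gamma_i$), nor can both lie in $W$ unless one is separating, etc. So the real content is to show that for a \emph{diameter-realizing} pair, at least one of $\alpha,\beta$ is at distance $1$ from the shortcut set, while the other is at distance $\leq 2$, and that these can be spliced to length $3$; this is exactly parallel to the final paragraphs of the proof of Lemma~\ref{leq4}, where $2$-curves and type-(1) curves are combined. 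The second, more technical, point is justifying the existence of the finite filling family $\{\gamma_i\}$ in $W$ with the uniform intersection bound $k$ against all arcs and curves of $W$; this is where the hypothesis $|\chi(W)|\geq 512$ (equivalently $|\chi(S)|\geq k+512$) is used, via the surgery estimates recorded in the appendix, and I would cite Lemma~\ref{multi-arcint} and the appendix rather than reprove them.
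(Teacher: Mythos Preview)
Your choice of shortcut set is on the wrong side of the decomposition, and this is fatal. The paper takes $\Gamma$ to be the set of curves in $L(u,v)$ that lie \emph{inside} $F(u,v)$; this set is automatically finite by Lemma~\ref{fillwholesurface}, since $u$ and $v$ fill $F(u,v)$. You instead try to build $\Gamma$ inside $W = S\setminus F(u,v)$, asking for a finite filling family $\{\gamma_i\}$ in $W$ with the property that every essential arc or curve in $W$ meets some $\gamma_i$ at most $k$ times. No such family exists: for any finite collection of curves in $W$, a high power of a pseudo-Anosov on $W$ applied to any fixed curve produces a curve in $W$ (hence in $L(u,v)$, since it is disjoint from $u\cup v$) whose intersection with every $\gamma_i$ exceeds $k$. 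Lemma~\ref{multi-arcint} does not rescue this: it produces, for each $\alpha$, \emph{some} curve $\beta\subset W$ with $i(\alpha,\beta)\leq k$, but $\beta$ depends on $\alpha$ and there is no reason it lies in a prescribed finite set.

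The bookkeeping obstacle you flag is also unresolved with your shortcut set, and the paper's argument handles it by a different mechanism. With $\Gamma$ consisting of the curves inside $F(u,v)$, the key claim is that for a diameter-realizing pair $\alpha,\beta$, at least one of them is at $L$-distance $1$ from $\Gamma$. This is proved by contradiction using a partial pseudo-Anosov on $W$: if both $\alpha,\beta$ were distance $\geq 2$ from $\Gamma$, replace $\alpha$ by $\alpha'=\phi^N(\alpha)$ for large $N$ so that $d_{\mathcal{AC}(W)}(\alpha',\beta)>3$; since $\alpha'\cap F(u,v)=\alpha\cap F(u,v)$, the distance-$\geq 2$ condition persists, and then any length-$\leq 3$ path in $L(u,v)$ from $\alpha'$ to $\beta$ avoids $\Gamma$ entirely and hence projects to a length-$\leq 3$ path in $\mathcal{AC}(W)$, a contradiction. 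Once, say, $d_L(\alpha,\gamma_i)=1$, one observes $\beta$ must meet $F(u,v)$ (else $d_L(\alpha,\beta)\leq 2$), applies Lemma~\ref{multi-arcint} to $\beta\cap W$ to get $\eta\subset W$ with $i(\eta,\beta)\leq k$, and the path $\alpha,\gamma_i,\eta,\beta$ has length $3$ through $\Gamma$. The partial pseudo-Anosov step is the missing idea in your sketch.
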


\begin{proof}

Let $\Gamma= \{\gamma_{0},..., \gamma_{n}\}$ be the set of curves in $L(u,v)$ entirely contained in $F(u,v)$, which is finite by Lemma \ref{fillwholesurface}. We claim that they form a shortcut set. 

Let $\alpha, \beta \in L(u,v)$ with $d_L(\alpha, \beta)=3$. We will construct a path of length $3$ between $\alpha$ and $\beta$ which contains at least one $\gamma_i \in \Gamma$. This is trivially true if either $\alpha$ or $\beta$ is contained in the subsurface $F(u,v)$. 

\medskip

\noindent {\bf Claim:} Either $\alpha$ or $\beta$ has distance $1$ from $\Gamma$. 

\begin{proof} The claim is clear if $\alpha$ or $\beta$ is contained in $S\setminus F(u,v)$. Consider when $\alpha$ and $\beta$ intersect both $F(u,v)$ and its complement nontrivially. Assume by contradiction that both $\alpha$ and $\beta$ are $L(u,v)$-distance at least $2$ from every curve in $\Gamma$. We can then replace $\alpha$ with its image $\alpha'$ under a high power of a mapping class $\phi$ which restricts to the identity on $F(u,v)$ and acts as a pseudo-Anosov on $S \setminus F(u,v)$. By choosing a sufficiently large power, we can assume that
\[ d_{\mathcal{AC}(S \setminus F(u,v))}(\alpha', \beta) > 4.\] Observe that $\alpha'$ is still at least distance $2$ from $\Gamma$ because $\alpha \cap F(u,v)= \alpha' \cap F(u,v)$. 

Let $\left\{ \alpha', v_{0}, v_{1}, \beta \right\}$ be a path in $L(u,v)$ from $\alpha'$ to $\beta$, where $v_{0}$ and $v_{1}$ are not necessarily distinct (such a path always exists since $\diam(L(u, v))=3$ ). By assumption $\beta$ is at least $L(u,v)$-distance $2$ from every curve in $\Gamma$, so $v_0, v_1 \not \in \Gamma$. Then both $v_0$ and $v_1$ project non-trivially to $S\setminus F(u,v)$, which yields a path of length at most $3$ in $\mathcal{AC}(S \setminus F(u,v))$ between the projections of $\alpha'$ and $\beta$. This is a contradiction, which finishs the proof of the claim. \end{proof}

Now, without loss of generality, assume $\alpha$ is adjacent to some $\gamma _i$. Then $\beta \cap F(u,v) \neq \emptyset$. Otherwise $\beta$ is disjoint from $\gamma_i$ and $d_L(\alpha, \beta) \le 2$, a contradiction. By Lemma \ref{multi-arcint} there exists a simple closed curve $\eta \in S \setminus F(u,v)$ that intersects $\beta$ no more than $k$ times. Thus $\eta$ is adjacent to both $\gamma _i$ and $\beta$ in the link and we have a desired path of length $3$ from $\alpha$ to $\beta$ passing through the shortcut set. \end{proof}

The following two propositions together with Lemma \ref{lemma:jnsep} establishes the assertions in the first, third, and fourth rows of Table~1.

\begin{proposition} \label{k jointly separating}
Let $u, v \in \mathcal{C}_k(S)$ be non-separating. If $(u,v)$ is a $0$-edge such that $u$ and $v$ jointly separate $S$, then:
\begin{enumerate}
\item [(i)] If neither component of $S \setminus (u \cup v)$ is a $3$-holed sphere, then $L(u,v)$ has diameter $3$ and does not have a shortcut set.
\item [(ii)] Otherwise, $L(u,v)$ has infinite diameter.
\end{enumerate}
%HAVE TO MENTION RESTRICTIONS ON SURFACE
\end{proposition}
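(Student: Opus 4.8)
The plan is to follow the template of Lemma~\ref{proposition:diam-3}, now allowing vertices of $L(u,v)$ to meet $u$ and $v$ up to $k$ times. Write $S\setminus(u\cup v)=S_1\sqcup S_2$ for the two complementary pieces; since $u$ and $v$ are disjoint, Euler characteristic is additive, so $|\chi(S_1)|+|\chi(S_2)|=|\chi(S)|\geq k+512$. Every vertex of $L(u,v)$ either is supported in $S_1$, is supported in $S_2$, or meets both pieces essentially; call a vertex of the last kind \emph{transverse}. A vertex supported in a piece that is not a $3$-holed sphere is an essential curve of $S$, and two such curves supported in opposite pieces are disjoint; conversely, if $S_i$ is a $3$-holed sphere there is \emph{no} vertex of $L(u,v)$ supported in $S_i$, since a pair of pants has no essential non-peripheral curve and its peripheral curves are isotopic into $u$, $v$, or $\partial S$. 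Finally, for a transverse vertex $\gamma$ the weighted multi-arcs $\gamma\cap S_1$ and $\gamma\cap S_2$ have equal total weight $\tfrac12\big(i(\gamma,u)+i(\gamma,v)\big)\leq k$.

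For the upper bound in part (i), fix $\alpha,\beta\in L(u,v)$; by hypothesis each complementary piece carries an essential curve of $S$. Non-transverse pairs are at $L(u,v)$-distance $\leq 2$ exactly as in Lemma~\ref{proposition:diam-3} (through a curve in the opposite piece, or distance $1$ if they lie in opposite pieces). For a transverse vertex $\gamma$: since a single simple closed curve cannot fill $S$, at least one of $\gamma\cap S_1,\gamma\cap S_2$ — say $\gamma\cap S_j$ — is non-filling, and if moreover $\gamma\cap S_i$ ($i\neq j$) fills $S_i$ then, a weight-$\leq k$ filling multi-arc forcing $|\chi|\leq k$, we get $|\chi(S_i)|\leq k$ and hence $|\chi(S_j)|\geq 512$. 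One then produces a curve $\omega_\gamma$ supported in a single complementary piece with $d_L(\gamma,\omega_\gamma)=1$: in the non-filling case take $\omega_\gamma\subset S_j$ disjoint from $\gamma\cap S_j$, and in the filling case apply Lemma~\ref{multi-arcint} to $\gamma\cap S_j$ inside the now-large piece $S_j$ to get $\omega_\gamma\subset S_j$ with $i(\gamma,\omega_\gamma)=i(\gamma\cap S_j,\omega_\gamma)\leq k$; the piece can be chosen so that two transverse vertices get matched to curves in opposite pieces whenever possible. Concatenating these length-$1$ steps with the non-transverse estimate yields $\diam(L(u,v))\leq 3$. I expect this bookkeeping — guaranteeing in every configuration that the piece into which one projects is large enough for Lemma~\ref{multi-arcint} — to be the most delicate point, and it is exactly where $|\chi(S)|\geq k+512$ is used.

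For the lower bound and the non-existence of a shortcut set in part (i), which are proved together, I would argue in the spirit of Lemma~\ref{4}, Proposition~\ref{2-edge}, and the Claim inside Proposition~\ref{diam3}. Given a finite candidate set $\Gamma=\{\gamma_0,\dots,\gamma_n\}\subset L(u,v)$, choose pseudo-Anosov classes $\phi_i\in\Mod(S_i)$ fixing $u$ and $v$ and essential curves $a\subset S_1$, $b\subset S_2$; by Remark~\ref{loxodromic} and Propositions~\ref{qi} and \ref{lemma:ACkqi}, high powers $\phi_1^N a$, $\phi_2^N b$ have subsurface projections far (in $\mathcal{AC}_k(S_1)$, resp.\ $\mathcal{AC}_k(S_2)$) from the projections of every $\gamma_j$ and from one another. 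Surgering against $u\cup v$ produces transverse vertices $\alpha,\beta\in L(u,v)$ whose $S_1$- and $S_2$-projections are far from those of every $\gamma_j$, so that $i(\alpha,\gamma_j)>k$ and $i(\beta,\gamma_j)>k$ for all $j$; then no length-$3$ path from $\alpha$ to $\beta$ can use any $\gamma_j$ as an intermediate vertex, since $\gamma_j$ is adjacent to neither endpoint. Since a path in $L(u,v)$ all of whose vertices meet $S_i$ projects to a path of comparable length in $\mathcal{C}(S_i)$ (consecutive vertices meet at most $k$ times, hence have uniformly bounded subsurface-projection distance), a pair $\alpha,\beta$ chosen with no common neighbour — e.g.\ arranged to jointly fill $S$, or more carefully so that the multi-arcs they cut on each piece together leave no room for a disjoint curve — is at distance exactly $3$. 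As $\Gamma$ was arbitrary, $L(u,v)$ admits no shortcut set.

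Part (ii) is the easy case. If, say, $S_1$ is a $3$-holed sphere, then $|\chi(S_2)|=|\chi(S)|-1\geq k+511$, so $S_2$ is not a $3$-holed sphere, and by the first paragraph no vertex of $L(u,v)$ is supported in $S_1$; hence every vertex of $L(u,v)$ projects nontrivially to $S_2$. Picking a pseudo-Anosov $\phi\in\Mod(S_2)$ fixing $u$ and $v$ and an essential curve $c\subset S_2$, the curves $c,\phi(c),\phi^2(c),\dots$ all lie in $L(u,v)$, and any $L(u,v)$-geodesic between two of them projects to a $\mathcal{C}(S_2)$-path of comparable length; since $\phi$ is loxodromic on $\mathcal{C}(S_2)$ (Masur--Minsky, Remark~\ref{loxodromic}), these distances are unbounded, so $\diam(L(u,v))=\infty$. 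This is the same mechanism as the final paragraph of Lemma~\ref{proposition:diam-3} and of Lemma~\ref{lemma:jnsep}.
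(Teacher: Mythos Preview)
Your outline for the lower bound, the non-existence of a shortcut set, and part~(ii) is essentially the paper's argument and is fine. The gap is in the upper bound of part~(i), precisely at the ``bookkeeping'' you flag.

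First, a small point: the sentence ``since a single simple closed curve cannot fill $S$, at least one of $\gamma\cap S_1,\gamma\cap S_2$ is non-filling'' is not a valid deduction. What is true is that if $\gamma\cap S_i$ fills $S_i$ then $|\chi(S_i)|\le k$ (your weight estimate is correct), so both cannot fill when $k<512$; but for $k\ge 512$ both may fill and your case split collapses.

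More importantly, even granting your dichotomy, your construction of $\omega_\gamma$ does not let you \emph{prescribe} the piece in which $\omega_\gamma$ lives. Concretely: take $|\chi(S_1)|$ small (say $2$) and $|\chi(S_2)|\ge 512$, and take transverse $\alpha,\beta$ with $\alpha\cap S_1$ and $\beta\cap S_1$ each filling $S_1$. Then neither ``disjoint curve in $S_1$'' nor Lemma~\ref{multi-arcint} applied to $S_1$ is available, so your method only produces $\omega_\alpha,\omega_\beta\subset S_2$, giving $d_L(\alpha,\beta)\le 4$ rather than $3$. The phrase ``the piece can be chosen so that two transverse vertices get matched to curves in opposite pieces whenever possible'' hides exactly this failure.

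The paper avoids this by an elementary surgery that works in \emph{any} chosen piece $S_i$, needing only that $S_i$ is not a $3$-holed sphere and with no lower bound on $|\chi(S_i)|$: given a transverse $\alpha$, look at $\pi_{S_1}(\alpha)$ and either (1) it is a single arc class, so there is a curve in $S_1$ disjoint from $\alpha$; or (2) some arc of $\pi_{S_1}(\alpha)$ has both endpoints on $u$ (say), and concatenating it with a subarc of $u$ gives $\rho\subset S_1$ with $i(\alpha,\rho)\le i(\alpha,u)\le k$; or (3) all arcs run from $u$ to $v$, and concatenating two $u$-adjacent arcs with a short subarc of $u$ and a subarc of $v$ gives $\rho\subset S_1$ with $i(\alpha,\rho)\le i(\alpha,v)\le k$. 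This produces the neighbour of $\alpha$ in $S_1$ (and symmetrically one for $\beta$ in $S_2$), yielding $d_L(\alpha,\beta)\le 3$. Lemma~\ref{multi-arcint} is used only in the mixed case ($\alpha\subset S_1$, $\beta$ transverse), where it suffices to land in \emph{either} piece. Replacing your non-filling/multi-arc dichotomy by this explicit construction closes the gap; the rest of your outline then goes through as in the paper.
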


\begin{proof}
Denote the two components of $S \setminus (u \cup v)$ by $S_1$ and $S_2.$ Let $\alpha, \beta \in L(u,v)$ be distinct.

We will begin by considering (i).  By assumption we know that neither $S_1$ nor $S_2$ is a three-holed sphere. Note that if $\alpha$ and $\beta$ are both contained in the same component of $S\setminus(u \cup v)$, say $S_1$, then $d_L(\alpha, \beta)\leq 2$, since there is an essential curve contained in $S_2$ and thus, disjoint from both $\alpha$ and $\beta$. If $\alpha$ and $\beta$ are contained in different components of $S\setminus (u \cup v)$, then $d_L(\alpha,\beta)=1$ since they are disjoint.

Next, suppose that $\alpha \subset S_1$ and that $\beta$ intersects both $S_1$ and $S_2$ non-trivially. Then by Lemma \ref{multi-arcint}, there is an essential curve $\eta$ contained entirely in either $S_1$ or $S_2$ so that $i(\eta, \beta) \leq k$. If $\eta \subset S_2$, it follows that $d_{L}(\alpha, \beta) \leq 2$. If $\eta \subset S_1$, let $\rho \subset S_2$ be any essential curve, which exists because $S_2$ is not a $3$-holed sphere. Then $\left\{\alpha, \rho, \eta, \beta\right\}$ is a length $3$ path in $L(u,v)$ from $\alpha$ to $\beta$. 

Finally, suppose $\alpha$ and $\beta$ intersect both $S_1$ and $S_2$ nontrivially. 
%%Say there exist powers of pseudo-Anosovs high enough to make d_{\pi_{S_i}}(\alpha',\beta') > 3? Do we need to say more?

We now construct a path $\{\alpha, \rho,\eta,\beta\}$ of length $3$  between in $L(u,v)$. There are three possibilities:

\begin{enumerate}
\item the projection $\pi_{S_1}(\alpha)$ consists of a single weighted arc. In this case, we let $\rho$ be a curve on $S_1$ disjoint from $\alpha$. This is always possible since $S_1$ is not a $3$-holed sphere. See Figure~\ref{fig:proposition6.7-1}. 

\begin{figure}[htb!]
\centering
\def\svgwidth{4in}
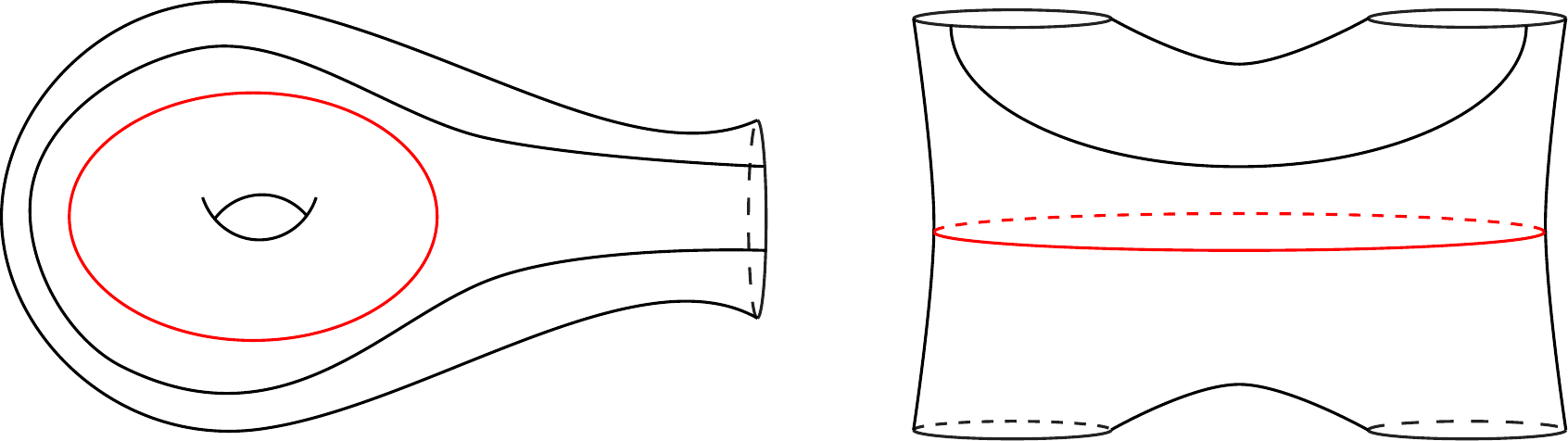
\caption{Since $S_1$ is not a $3$-holed sphere, it either has genus or at least $4$ punctures. In either case, given any arc there always exists an essential curve disjoint from it.}
\label{fig:proposition6.7-1}
\end{figure}

\item $\pi_{S_1}(\alpha)$ consists of multiple non-homotopic essential arcs, but every arc in $\pi_{S_1}(\alpha)$ begins and ends at two different boundary components (i.e., each arc intersects $u$ and $v$ only once). In this case, we construct $\rho$ in the following manner: take two non-homotopic arcs $c_1, c_2$ in $\pi_{S_1}(\alpha)$ whose endpoints on $u$ are adjacent to each other among all arcs in $\pi_{S_1}(\alpha)$. Concatenate $c_1$ and $c_2$ first with the subarc of $u$ that contains no other arcs' endpoints, and then with a subarc of $v$ that connects the two other endpoints of $c_1$ and $c_2$. The concatenation $\rho$ is an essential simple closed curve since we assume $c_1$ and $c_2$ are non-homotopic. See Figure~\ref{fig:proposition6.7-2}.

\begin{figure}[htb!]
\centering
\def\svgwidth{2.5in}
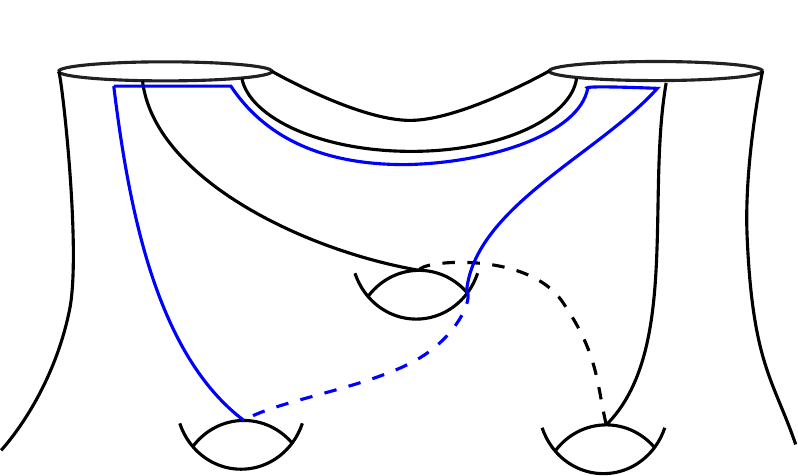
\caption{Construction of $\rho$ in the case that each arc of $\pi_{S_1}(\alpha)$ has endpoints on distinct boundary components and there exists at least 2 arcs in the projection that are not parallel.}
\label{fig:proposition6.7-2}
\end{figure}

\item $\pi_{S_1}(\alpha)$ consists of multiple non-isotopic essential arcs, but some arc in $\pi_{S_1}(\alpha)$ begins and ends at the same component. Without loss of generality, we assume there exists an arc $c \in \pi_{S_1}(\alpha)$ intersecting $u$ twice. In this case, we construct $\rho$ by concatenating $c$ with the subarc of $u$ that makes $\rho$ essential. This is always possible since $S_1$ is not a three-holed sphere as in Figure~\ref{fig:proposition6.7-1}. Since $\rho$ is disjoint from both $u$ and $v$, $\rho\in L(u,v)$. Meanwhile, $d_{L(u,v)}(\rho, \alpha)=1$ since $i(\alpha, \rho)\leq i(\alpha, u)\leq k.$ See Figure~\ref{fig:proposition6.7-3}.

%precluding the new construction in the email

Note that the curve $\rho$ will not be peripheral except in the case that $S_1$ is a punctured annulus bounded by $u$ and $v$. In this case, we choose $\rho$ so that it differs from $u$ by a single puncture on the interior of $S_1$. Since $S_1$ is not a three-holed sphere, $u$ and $v$ are not homotopic. 

\begin{figure}[htb!]
\centering
\def\svgwidth{2.5in}
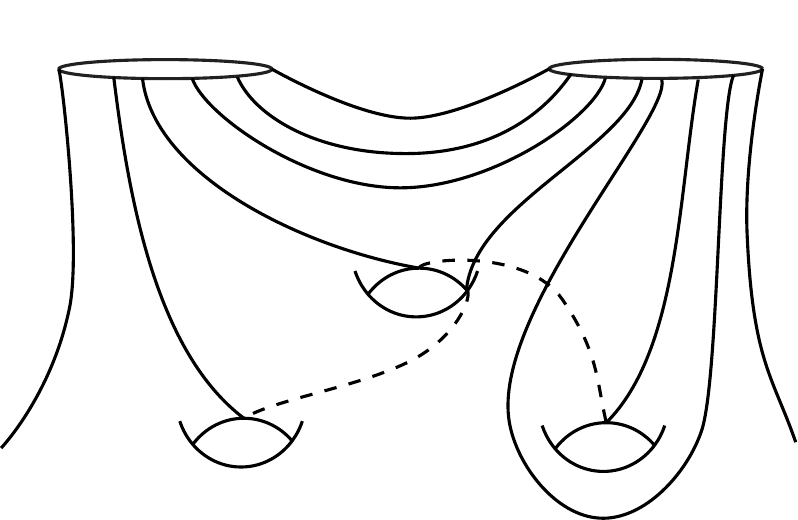
\caption{In the event that at least one arc in $\pi_{S_1}(\alpha)$ has both endpoints on $u$, we concatenate it with the red sub-arc of $u$ to form the desired $\rho$.}
\label{fig:proposition6.7-3}
\end{figure}

\end{enumerate}

We therefore have constructed a curve $\rho$ that is completely contained in $S_1$ and adjacent to $\alpha$ in $L(u,v).$ A similar curve $\eta \subset S_2$ adjacent to $\beta$ may be constructed. This gives a path of length $3$ between $\alpha$ and $\beta$ in $L(u,v)$ as desired. 

Now consider partial pseudo-Anosovs $\varphi_i$ which act as a pseudo-Anosov on $S_i$ and as the identity on $S\setminus S_i$ for $i = 1,2$. Let $\alpha' = \varphi_2^{n_2}(\varphi_1^{n_1}(\alpha))$ where $n_{1}, n_{2}$ are chosen sufficiently large so as to guarantee $d_{\mathcal{AC}_{k}(S_{i})}(\alpha', \beta)>3$. We can do this since the diameter of $\mathcal{AC}_k(S_i)$ is infinite for $i = 1, 2$ by Proposition \ref{lemma:ACkqi}. 

Note that we can construct a path $\{\alpha', \rho, \eta, \beta\}$ between $\alpha'$ and $\beta$ in the same way as above. We will now show that such a path is minimal in $L(u,v)$. Suppose to the contrary that there exists a path of length $2$ between $\alpha'$ and $\beta$, say $\{\alpha', \psi, \beta\}.$ Without loss of generality, $\pi_{S_1}(\psi)$ is nontrivial, so $d_{\pi_{S_1}(L)}(\alpha', \beta)=2,$ a contradiction. A path of lenth $1$ is ruled out for the same reason. Thus, the shortest path between $\alpha'$ and $\beta$ has length 3 and $\diam(L(u,v))=3.$

It remains to show that there does not exist a shortcut set in $L(u,v)$. Let $\Gamma = \{\gamma_1, \gamma_2, \ldots, \gamma_n\}$ be any finite set of curves in $L(u,v)$. Since $\Gamma$ is a finite set, its image in $\mathcal{AC}_k(S_i)$ has finite diameter. 

Let $\rho_i$ be an essential arc in $S_i$ that has one end point on $u$ and the other on $v$. Fix $\beta _i = \varphi_i^n(\rho_i)$ with $n \in \mathbb N$ large enough so that $d_{\mathcal{AC}_k(S_i)}(\beta_i, \pi_i(\Gamma)) > 3$, where $\varphi_i$ are defined as above. Now let $\alpha _i = \varphi_i^{m}(\rho_i)$ where $m \in \mathbb N$ is chosen sufficiently large so that $d_{\mathcal{AC}_k(S_i)}(\alpha_i, \beta_i) > 6$. Note that this implies $d_{\mathcal{AC}_k(S_i)}(\alpha_i,\pi_i(\Gamma) ) >3$. 

Let $\alpha$ and $\beta$ be the curves obtained by concatenating $\alpha_1$ with $\alpha_2$ and $\beta_1$ with $\beta_2$ respectively. Note that $\alpha$ and $\beta$ intersect each of $u$ and $v$ exactly once and are therefore contained in $L(u,v)$ and $d_{L(u,v)}(\alpha, \beta) = 3$, by construction. Additionally, if $\alpha$ or $\beta$ were adjacent to some $\gamma_j \in \Gamma$ in $L(u,v)$ then one of the $\alpha_i$ or $\beta_i$ would be adjacent to $\pi _ i (\gamma _j)$ in $\mathcal{AC}_k(S_i)$, a contradiction. Therefore, $\alpha$ and $\beta$ are at least distance $2$ from $\Gamma$ and therefore there is no path of length $3$ from $\alpha$ to $\beta$ that goes through $\Gamma$. We conclude that there does not exist a shortcut set on $S$ in $L(u,v)$. 

We now finish the proof by considering (ii). Suppose exactly one of the connected components of $S\setminus (u\cup v)$ is a three-holed sphere, say $S_1$. Then we can choose essential simple closed curves $\alpha, \beta$ contained in $S_2$ such that $d_{\mathcal{AC}_k(S_2)}(\alpha,\beta)$ is arbitrarily large. Let $P = \{\alpha,v_1,\dots,v_n,\beta\}$ be any path of shortest length between $\alpha$ and $\beta$ in $L(u,v).$ Then each $v_i$ for $1\leq i\leq n$ projects nontrivially to $S_2.$ Therefore $\pi_{S_2}(P) = \{\pi_{S_2}(\alpha),\pi_{S_2}(v_1),\dots,\pi_{S_2}(v_n),\pi_{S_2}(\beta)\}$ is a path in $\mathcal{AC}_k(S_2).$ Observe that $\pi_{S_2}(\alpha) = \alpha$ and $\pi_{S_2}(\beta) = \beta$, which implies $|P| \geq d_{\mathcal{AC}_k(S_2)}(\alpha, \beta)$. So there exists curves $\alpha, \beta \in L(u,v)$ that are arbitrarily far apart in $L(u,v)$ and so $L(u,v)$ has infinite diameter. \end{proof}

It remains to consider the possibility that $(u,v)$ is a $0$ edge with at least one of $u,v$ separating: 

\begin{proposition} \label{k-separating}
Let $u, v \in \mathcal{C}_k(S)$ such that at least one of them is separating. If $(u,v)$ is a $0$-edge, then:
\begin{enumerate}
\item [(i)] If at least two components of $S \setminus (u \cup v)$ are not $3$-holed spheres, then $L(u,v)$ has diameter $3$ and does not have a shortcut set.
\item [(ii)] Otherwise, $L(u,v)$ has infinite diameter.
\end{enumerate}
%HAVE TO MENTION RESTRICTIONS ON SURFACE
\end{proposition}

\begin{proof} This argument will follow the logic of the proof of Proposition \ref{k jointly separating} very closely; we include it in full for the ease of the reader.

Suppose that at least two components of $S \setminus (u \cup v)$ are not $3$-holed spheres. Note that there can be at most three components of $S \setminus (u \cup v)$; if there are exactly two components which are not $3$-holed spheres, denote them by $S_{1}, S_{2}$. In this scenario, the exact argument used in (i) of Proposition \ref{k jointly separating} applies here.  

We also note that if we are in scenario (ii), the argument used above in Proposition \ref{k jointly separating} applies here, verbatim. 

It remains to consider the possibility that there are three components of $S \setminus (u \cup v)$ all of which are not $3$-holed spheres. Denote these components by $S_{1}, S_{2}, S_{3}$. Let $\alpha, \beta$ be curves in $L(u,v)$. Suppose that $\alpha$ is disjoint from some $S_{i}$. Using Lemma \ref{multi-arcint}, there exists some $\eta$ intersecting $\beta$ at most $k$ times and contained entirely in some $S_{j}$. If $j= i$, then $d_{L}(\alpha, \beta) \leq 2$. Otherwise, choose $\rho \subset S_{i}$ arbitrarily and note that $\left\{\alpha, \rho, \eta, \beta \right\}$ is a length $3$-path. 

Assume next that $\alpha, \beta$ both intersect each $S_{i}$. This is analogous to the case in the previous proof corresponding to $\alpha, \beta$ intersecting both $S_{1}, S_{2}$. The argument used in this setting in the previous proposition yielded a curve $\rho$ contained entirely inside $S_{1}$ and disjoint from $\alpha$. The exact same argument can be used here to produce a curve contained entirely inside either $S_{1}, S_{2}, S_{3}$ and disjoint from $\alpha$. By symmetry, we can also produce a curve disjoint from $\beta$ and contained entirely in one of the other components.  

For the lower bound on diameter, choose partial pseudo-Anosovs $\phi_{i}$ on $S_{i}$ and choose some $\alpha \in L(u,v)$ intersecting each $S_{i}$. Then, as in the previous proof, choose $n_{i}, i=1,2,3$ sufficiently large so as to guarantee that 
\[ d_{\mathcal{AC}_{k}(S_{i})}(\alpha, \phi_{1}^{n_{1}}(\phi_{2}^{n_{2}}(\phi_{3}^{n_{3}}(\alpha))) > 3. \]
Then as in the previous argument, no length $2$ path can exist between $\alpha$ and $\beta:= \phi_{1}^{n_{1}}(\phi_{2}^{n_{2}}(\phi_{3}^{n_{3}}(\alpha)))$ since any curve $\psi$ realizing this path has to project non-trivially to at least one $S_{i}$ and this would yield a path of length $2$ between the projections of $\alpha$ and $\beta$ in that subsurface. 

Finally we will show that there does not exist a shortcut set in $L(u,v)$. Consider any finite set of curves $\Gamma = \{\gamma_1, \gamma_2, \ldots \gamma_n\}$ with each $\gamma_i \in L(u,v)$. Note that $\Gamma$ has finite diameter in $\mathcal AC_k(S_i)$. The idea is to construct curves $\alpha, \beta \in L(u,v)$ which are distance exactly $3$ from each other, but are also distance at least $2$ from $\Gamma$. This rules out any curve in $\Gamma$ from appearing in any length of path $3$ between $\alpha$ and $\beta$ and we can conclude that there is no shortcut set for $L(u,v)$. In fact, the proof can be carried out in the exact same way as in the previous proposition, since we are assuming that each component of $S \setminus (u \cup v)$ are not $3$-holed spheres. 

\end{proof}

We are now in a position to prove our main result. 

\vspace{2 mm}

\noindent \textbf{Theorem \ref{k-curve}. }\textit{Suppose $|\chi(S)| \geq k+ 512$. Then the natural map 
\[ \mbox{Mod}^{\pm}(S) \rightarrow \mbox{Aut}(\mathcal{C}_k(S)) \]
is an isomorphism.}

\vspace{2 mm}

\begin{proof} 
For any edge $(u,v)$ in $\mathcal{C}_k(S)$, note that any automorphism of $\mathcal{C}_k(S)$ preserves the cardinality of $L(u,v)$, the diameter of $L(u,v)$, and the existence of a shortcut set for $L(u,v)$. Therefore by Lemma \ref{k-edge finite}, Lemma \ref{lemma:jnsep}, Proposition \ref{diam3}, and \ref{k jointly separating}, we obtain all the assertions made in Table \ref{table:distinguish}. This implies that every automorphism of $\mathcal{C}_k(S)$ sends $0$-edges to $0$-edges and non-zero edges to non-zero edges. Hence any automorphism of $\mathcal{C}_{k}(S)$ induces an automorphism of the curve graph $\mathcal{C}(S)$ and therefore corresponds to a mapping class. The other direction of the isomorphism is clear and hence the theorem follows. \end{proof}

\appendix
\section{Where does 512 come from?}

\label{appendix}

The following result is necessary for the proofs of Proposition \ref{diam3} and \ref{k jointly separating} and introduces the restriction of $|\chi(S)| \geq k + 512$ that appears in Theorem \ref{k-curve} (see Remark \ref{512}).

\begin{lemma} \label{multi-arcint}
There exists some constant $D>0$ satisfying the following. If $Y\subset S$ is an essential subsurface (i.e, all boundary components are essential in $S$) satisfying $|\chi(S\setminus Y)|>D$ and $\alpha$ a simple closed curve on $S$ in minimal position with $\partial Y$ and with $i(\alpha, \partial Y) \leq 4k$ for some $k \in \mathbb N$, then there exists an essential simple closed curve $\beta$ on $S\setminus Y$ such that $i(\alpha, \beta) \le k$. 
\end{lemma}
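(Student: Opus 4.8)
The plan is to view $\alpha \cap (S \setminus Y)$ as a weighted multi-arc $\alpha'$ on the subsurface $Z := S \setminus Y$ and to run a surgery argument on this multi-arc, closely following the argument of the second author from \cite{Aougab2} that produces curves of controlled intersection number inside a subsurface. Since $i(\alpha, \partial Y) \leq 4k$, the total weight $w(\alpha')$ of this multi-arc is at most $4k$ (each arc of $\alpha'$ has both endpoints on $\partial Y$, and each endpoint accounts for one point of $\alpha \cap \partial Y$, so $w(\alpha') \le i(\alpha,\partial Y)/\dots$ — more precisely, the relevant weighted count is bounded by $4k$). The key point is that $Z$ has large complexity, $|\chi(Z)| > D$, so it contains many disjoint essential curves; we will use this abundance to find one that meets $\alpha'$ few times.

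First I would set up the surgery: given the weighted multi-arc $\alpha'$ on $Z$, one can perform the standard Lickorish-type surgery along a point of $\alpha' \cap \alpha'$ or along the boundary to produce either an essential simple closed curve or a new weighted multi-arc of strictly smaller total weight; iterating, at each stage one either terminates with an essential curve $\gamma$ disjoint from (a sub-multi-arc of) $\alpha'$, or one halves the relevant weight parameter. Tracking the arithmetic, after $O(\log k)$ steps one obtains an essential simple closed curve $\gamma$ on $Z$ with $i(\gamma, \alpha') \le k$, \emph{provided} that at every intermediate stage the surgered surface still supports an essential curve, i.e. no component we land in is a pair of pants or a disk/once-punctured disk. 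This is exactly where the hypothesis $|\chi(Z)| > D$ enters: a complexity count shows that cutting $Z$ along a bounded number (on the order of $w(\alpha') \le 4k$, but here we only need a bound depending on the surgery depth, hence on $\log k$) of arcs and curves cannot exhaust all of the complexity if $D$ is chosen large enough relative to this bound, so some component always remains rich enough to contain an essential curve. One then takes $\beta = \gamma$, and $i(\alpha, \beta) = i(\alpha', \gamma) \le k$ since $\gamma \subset Z = S\setminus Y$.

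The main obstacle is the bookkeeping that guarantees an essential curve survives at every stage of the surgery — i.e. pinning down how $D$ must depend on $k$ (or showing it can be taken independent of $k$, with only the surface-complexity loss depending on the surgery depth $\sim \log k$, which is absorbed into the ``$512$'' slack in Theorem \ref{k-curve}). Concretely one must verify that $Z$ minus the at-most-$N$ curves and arcs produced along the way, where $N$ is the number of surgery steps, still has a component with negative enough Euler characteristic to be neither a disk, once-punctured disk, annulus, nor pair of pants; since each cut changes Euler characteristic in a controlled way and $N$ is bounded in terms of $\log k$ (hence, in the regime of the theorem, in terms of $|\chi(S)|$), choosing $D$ appropriately makes this automatic. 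The remaining details — that the surgered objects are genuinely essential and in minimal position, and that the weight really does drop — are routine adaptations of the argument in \cite{Aougab2}, and the appendix's Remark \ref{512} records how the precise constant propagates.
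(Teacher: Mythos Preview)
Your proposal has a genuine gap: the iterated Lickorish/Hempel surgery you describe does not do what you need, and the paper says so explicitly. First, the object you propose to surger is ill-posed: $\alpha' = \alpha \cap Z$ is an embedded multi-arc, so there are no points of ``$\alpha' \cap \alpha'$'' to surger along, and reducing the total weight of $\alpha'$ is not the goal anyway --- $\alpha'$ is fixed, and you must produce a \emph{curve} $\beta \subset Z$ with $i(\beta,\alpha') \le k$. Second, even if you reinterpret the argument as the standard Hempel surgery (start with some curve in $Z$ and surger against $\alpha$), the paper notes right after the proof that ``the more standard surgery argument will only reduce the intersection number by a factor of $2$, as opposed to $4$,'' which is exactly the reduction from $i(\alpha,\partial Y)\le 4k$ to $i(\alpha,\beta)\le k$ that the lemma demands. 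Third, your own bookkeeping forces $D$ to grow with the surgery depth (on the order of $\log k$), whereas the lemma --- and its application via inequality~(\ref{cardinality}) --- requires $D$ to be a universal constant independent of $k$.

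The paper's argument is of an entirely different flavor. One assumes $\alpha'$ fills $Z$, separates its arcs into ``large mass'' (weight $\ge 2k/\sqrt{|\chi(Z)|}$) and ``small mass'' arcs, and cuts $Z$ along the at most $\sqrt{|\chi(Z)|}$ large-mass arcs. On the resulting surface one forms the dual graph to the remaining small-mass arcs, checks it has average degree at least $5/2$, and invokes the graph-theoretic girth bound of Lemma~\ref{girth} (from \cite{Fiorini et al}) to extract a short cycle. That cycle is the curve $\beta$: its length is $O(\log|\chi(Z)|)$ and each edge crosses an arc of mass $O(k/\sqrt{|\chi(Z)|})$, so $i(\beta,\alpha) = O\big(k\log|\chi(Z)|/\sqrt{|\chi(Z)|}\big) \le k$ once $|\chi(Z)|$ exceeds a universal $D$ (this is where $512$ comes from, via $f(1/2)\le 18$). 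The large/small mass dichotomy and the girth lemma are the ideas your proposal is missing.
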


Lemma \ref{multi-arcint} appears (in a different context and stated in a different way) as Proposition 3.1 of \cite{Aougab2}. For the sake of completeness, we include a sketch of proof here, which requires the following fact: 

\begin{lemma}[Lemma 3.2, \cite{Fiorini}] \label{girth}
Let $\varepsilon > 0$. There exists a decreasing function $f:(0,1) \longrightarrow \mathbb{R}_+$ so that if $G=(V,E)$ is any graph with average degree greater than $2+\varepsilon$, then $G$ has girth no larger than $g(\varepsilon)\cdot log_2(|V|)$.
\end{lemma}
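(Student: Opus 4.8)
The plan is to obtain this as a routine consequence of the classical fact that a graph whose average degree is bounded away from $2$ cannot have large girth, the quantitative version of which is the ``irregular Moore bound.'' First I would make the harmless reduction to the $2$-core: repeatedly delete vertices of degree at most $1$. Each deletion lowers $|V|$ by one and $|E|$ by at most one, so it never decreases $|E|-|V|$; since the hypothesis ``average degree $>2+\varepsilon$'' is exactly $|E|-|V|>\tfrac{\varepsilon}{2}|V|$ and $|V|$ only shrinks, the resulting $2$-core $G'$ still has average degree $>2+\varepsilon$, now has minimum degree $\ge 2$, has the same girth as $G$, and has $|V(G')|\le|V(G)|$. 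So I may assume $\delta(G)\ge 2$.

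Next, write $d:=2|E|/|V|>2+\varepsilon$, let $\gamma$ be the girth, and set $r:=\lfloor(\gamma-1)/2\rfloor$, so every cycle has length $>2r$. Then for any vertex $v$ the radius-$r$ ball $B_r(v)$ is ``tree-like'': a non-backtracking walk of length $\le r$ cannot revisit a vertex (that would force a cycle of length $\le r<\gamma$), and two distinct such walks from $v$ end at distinct vertices (a coincidence would force a cycle of length $\le 2r<\gamma$). Hence $|B_r(v)|$ equals the number of non-backtracking walks of length $\le r$ starting at $v$, and summing over $v$ while using $|B_r(v)|\le|V|$ gives
\[
|V|^{2}\;\ge\;\sum_{v}|B_r(v)|\;=\;\sum_{\ell=0}^{r}W_\ell ,
\]
where $W_\ell$ is the total number of non-backtracking walks of length $\ell$ in $G$. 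I would then invoke the irregular Moore bound of Alon--Hoory--Linial (equivalently, Lemma 3.2 of \cite{Fiorini et al} in the stated form), whose content is precisely the estimate $W_\ell\ge|V|(d-1)^{\ell}$; the base cases are immediate, since $W_0=|V|$, $W_1=2|E|=d|V|$, and $W_2=\sum_v\deg(v)(\deg(v)-1)\ge d(d-1)|V|$ by Cauchy--Schwarz. Combining these with $d-1>1+\varepsilon$ yields
\[
|V|^{2}\;\ge\;|V|\sum_{\ell=0}^{r}(1+\varepsilon)^{\ell}\;\ge\;|V|\,(1+\varepsilon)^{r},
\]
so $(1+\varepsilon)^{r}\le|V|$, i.e.\ $r\le\log_2|V|/\log_2(1+\varepsilon)$, whence $\gamma\le 2r+2\le f(\varepsilon)\log_2|V|$ for a positive decreasing $f$ (e.g.\ $f(\varepsilon)=4/\log_2(1+\varepsilon)$, using $|V|\ge 3$ since $G$ has a cycle). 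As $|V(G')|\le|V(G)|$, this is the asserted inequality for $G$.

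The hard part is the input $W_\ell\ge|V|(d-1)^{\ell}$ for $\ell\ge 3$. The naive approach of growing a ball from a single vertex fails here: a graph of minimum degree only $2$ (say, a long cycle with a few chords) has balls that grow only linearly, so no individual vertex need have a large ball. What rescues the count is averaging over all starting vertices together with a convexity argument controlling how the totals $W_\ell$ propagate despite the irregularity of the degree sequence; this is exactly the non-backtracking-walk computation of Alon--Hoory--Linial, which I would cite as a black box (it is also the substance of the cited lemma of \cite{Fiorini et al}) rather than reprove here.
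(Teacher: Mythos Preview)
The paper does not prove this lemma at all; it is quoted verbatim as Lemma~3.2 of \cite{Fiorini et al} and used as a black box, with only the remark that the proof there yields $f(1/2)\le 18$. So there is no ``paper's own proof'' to compare against.

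Your sketch is a correct outline of the standard Moore-bound argument: the reduction to the $2$-core is clean and preserves both the girth and the hypothesis on average degree; the tree-like structure of radius-$r$ balls when $\gamma>2r$ is standard; and the summation $\sum_v |B_r(v)|=\sum_{\ell\le r} W_\ell$ together with $W_\ell\ge |V|(d-1)^\ell$ gives the logarithmic girth bound with a decreasing $f$. The one point worth flagging is that, as you yourself acknowledge, the inequality $W_\ell\ge |V|(d-1)^\ell$ for $\ell\ge 3$ is precisely the Alon--Hoory--Linial irregular Moore bound, which is the substantive content of the cited lemma. So your proposal is best read as an unpacking of why the citation gives what is claimed, rather than an independent proof; that is entirely appropriate here, since the paper itself treats the result purely as an external input.
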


\begin{remark} \label{512} The proof of Lemma $3.2$ of \cite{Fiorini} gives the bound of $18$ for $f(1/2)$. This implies that any choice of $D > 296$ is sufficient for the statement of Theorem \ref{k-curve} to hold. However, we choose $D = 512$ to simplify our computation.
\end{remark}

We are now ready to sketch the proof of Lemma \ref{multi-arcint}.

\begin{proof}[Proof of Lemma \ref{multi-arcint}]
If there exists an essential simple curve in $S\setminus Y$ disjoint from $\alpha,$ we are done. %cite piotr
If no such curve exists,  $\alpha \cap (S \setminus Y)$ is a filling weighted multi-arc on $S \setminus Y$, and therefore the complement of $\alpha$ in $S\setminus Y$ is a collection of polygons. Abusing notation slightly, we refer to this weighted multi-arc $\alpha \cap (S \setminus Y)$ as $\alpha$. Note that 
\begin{equation} \label{linear above and below}
 |\chi(S \setminus Y)| \leq |\alpha| \leq 3|\chi(S \setminus Y)|. 
\end{equation}

Let $\alpha'$ be the collection of all arcs in $\alpha$ with weight $\ge \dfrac{2k}{\sqrt{|\chi(S \setminus Y)|}},$ which we will call {\bf large mass arcs}. Since $i(\alpha, \partial Y) \leq 4k$, there are at most $\sqrt{|\chi(S \setminus Y)|}$ such arcs in $\alpha.$ 

Let $S'$ be the complement of $\alpha'$ in $S \setminus Y.$ Cutting along any arc can decrease the absolute value of the Euler characteristic by at most $2$, and therefore 
\begin{equation} \label{sqrt}
|\chi(S')| \geq |\chi(S \setminus Y)| - 2\sqrt{|\chi(S \setminus Y)|},
\end{equation}
which is positive so long as $|\chi(S \setminus Y)| >4$. 

Let $G$ denote the dual graph to $\alpha$ on $S \setminus Y$: one vertex for each complementary polygon, two of which are connected by an edge when the corresponding polygons share a boundary edge. The average degree $\bar{d}(G)$ of $G$ is at least 3, since otherwise two arcs in $\alpha$ would be homotopic. Hence, 
\[ \bar{d}(G) = \dfrac{2|E(G)|}{|V(G)|}\geq 3. \]
We now define $G'$ to be the dual graph on $S'$ to the collection of arcs in $\alpha$ which do not have large mass. Using (\ref{linear above and below}) and (\ref{sqrt}), calculating the average degree of a vertex yields: 
\[\dfrac{2E(G')}
{V(G')}=\dfrac{2(|\alpha|-|\alpha'|)}{V(G')} \geq 2.5,\]
where the last inequality holds so long as $|\chi(S \setminus Y)| >36$.

By Lemma \ref{girth}, there exists a cycle, $\beta,$ on $G'$ with edge-length at most 
\[ f\left(\dfrac{1}{2}\right)\cdot \log_2(|V(G')|) \leq f\left(\dfrac{1}{2}\right)\cdot \log_2(2|\chi(S \setminus Y)|).\]

Without loss of generality, $\beta$ is simple (otherwise, there exists a shorter cycle). To prove that $\beta$ is essential, one shows that inessential intersections between $\beta$ and arcs of $\alpha$ imply the existence of inessential intersections between $\alpha$ and $\partial Y$ (see \cite{Aougab2} for details).

Therefore, 
\[i(\beta,\alpha) \leq \dfrac{f \left(\frac{1}{2}\right)\cdot \log_2(2|\chi(S \setminus Y)|)\cdot 2k}{\sqrt{|2\chi(S \setminus Y)|.}}\]
Hence it suffices to choose $D> 36$ and sufficiently large so that 
\[ \frac{1}{2 \cdot f\left(\frac{1}{2}\right)} > \frac{\log_2(D)}{D}. \qedhere \]
\end{proof}

We note that the conclusion of Lemma \ref{multi-arcint} is not implied by the argument of Hempel \cite{Hempel}. Indeed, the more standard surgery argument will only reduce the intersection number by a factor of $2$, as opposed to $4$.  

\section{The p= 6 case of Schaller's Conjecture} \label{p6}

To resolve Schaller's conjecture when $(g,p)=(0,6)$, we use an argumentation motivated by the tools in Section $5$ of \cite{McLeay}. In particular, we will prove the following.

\begin{proposition}\label{prop:appendixB}
An automorphism of the $0$-edge subgroup of $\mathcal{SC}(S_{0,6})$ can be uniquely extended to an automorphism of the full curve graph. 
\end{proposition}

Thus, by Proposition~\ref{prop:appendixB}, we obtain an injection of the automorphism group of $\mathcal{SC}(S)$ into the extended mapping class group, as desired. To this end, recall that a \textit{join} is a graph formed by two collections $\mathcal{V}, \mathcal{U}$ of vertices such that a pair of vertices $x,y \in \mathcal{V} \cup \mathcal{U}$ span an edge if an only if either $x \in \mathcal{V}, y \in \mathcal{U}$ or $x \in \mathcal{U}, y \in \mathcal{V}$. 

Using the language of \cite{McLeay}, a join is $2$-\textit{sided} whenever $\mathcal{V}, \mathcal{U}$ are both infinite sets, and a join is \textit{maximal} when (i) neither $\mathcal{V}$ nor $\mathcal{U}$ can be replaced with proper super-sets while maintaining the join property and (ii) there does not exist a vertex $x$ simultaneously disjoint from every vertex in $\mathcal{V} \cup \mathcal{U}$. It is immediate that a graph automorphism must preserve the collection of maximal $2$-sided joins. 

Let $\mathcal{J} = \left\{\mathcal{V}, \mathcal{U} \right\}$ be a maximal $2$-sided join in the $0$-edge subgraph of $\mathcal{SC}(S_{0,6})$. Given a vertex $v$, let $b_{1}(v), b_{2}(v)$ denote the two boundary components of $S_{0,6}$ cut off by $v$. It follows that if $v \in \mathcal{V}$ and $u \in \mathcal{U}$, 
\[ \left\{b_{1}(v), b_{2}(v) \right\} \cap \left\{b_{1}(u), b_{2}(u) \right\} = \emptyset, \]
since otherwise the curves corresponding to $u$ and $v$ would intersect. Therefore, $\mathcal{J}$ determines two disjoint subsets $T_{1}, T_{2}$ of the $6$ boundary components of $S$, such that each vertex $v \in \mathcal{V}$ (respectively $u \in \mathcal{U}$) has the property the $b_{i}(v) \in T_{1}$ (respectively, $b_{i}(u) \in T_{2}$). 

We claim that $|T_{1}|= |T_{2}|= 3$, for suppose that $|T_{1}|= 2$. Then maximality of $\mathcal{J}$ implies that there must be a boundary component $b$ that is neither in $T_{1}$, nor in $T_{2}$. Indeed, consider a pair of separating curves that jointly encircle the same two boundary components but which are arranged on opposite sides of a third boundary component that they jointly block from the rest of the surface, as in Figure~\ref{fig:AppendixB-1}. If no such pair is present in $\mathcal{V}$, then either 2-sidedness or maximality is contradicted, and as soon as such a pair exists, the boundary component they jointly block can not be in $T_{2}$. However, this also contradicts maximality, because we can then consider a curve that encircles the blocked boundary component and one of the two boundary components in $T_{1}$; such a curve can be added to $\mathcal{V}$. 

\begin{figure}[htb!]
\centering
\def\svgwidth{2in}
%% Creator: Inkscape 1.0.2 (e86c8708, 2021-01-15), www.inkscape.org
%% PDF/EPS/PS + LaTeX output extension by Johan Engelen, 2010
%% Accompanies image file '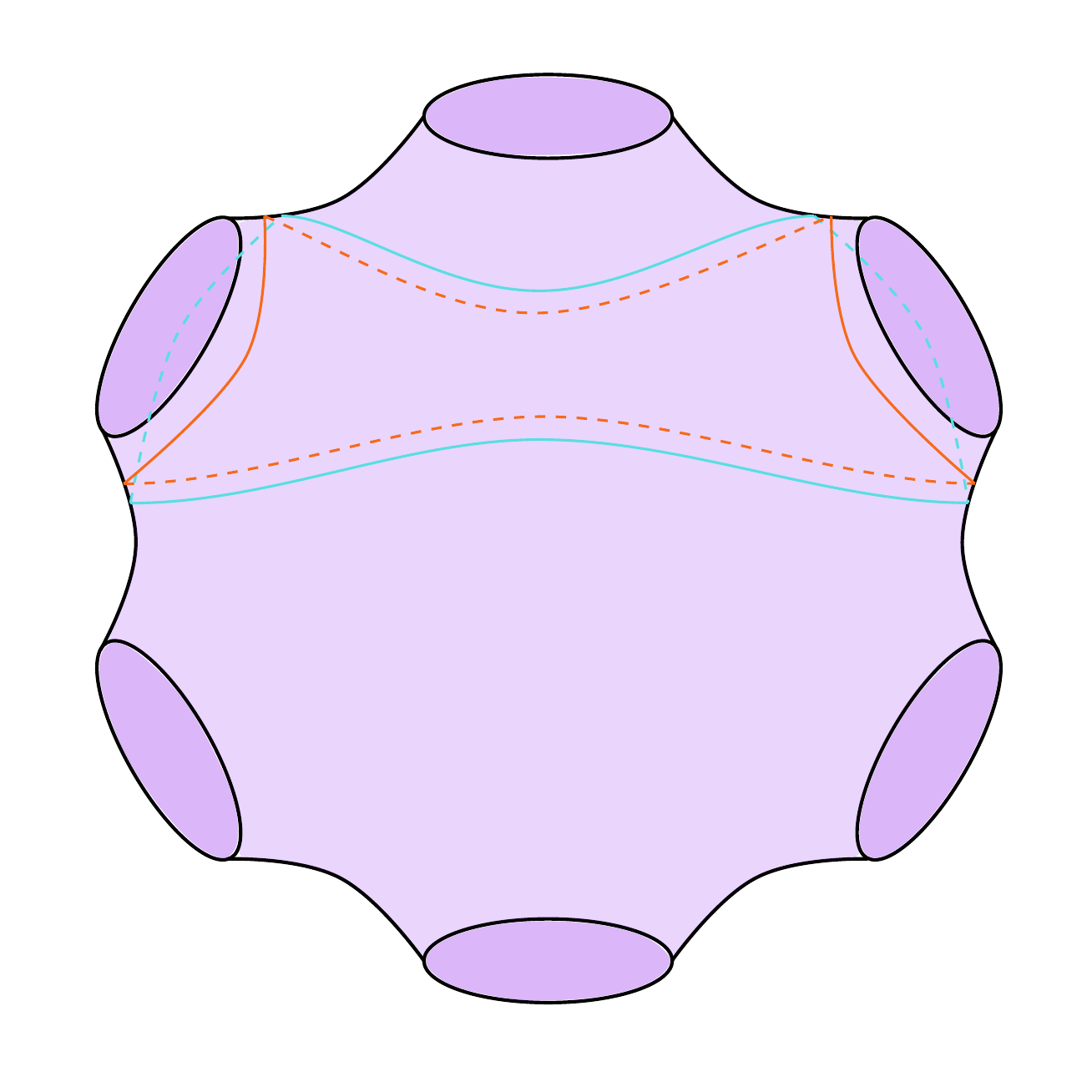' (pdf, eps, ps)
%%
%% To include the image in your LaTeX document, write
%%   \input{<filename>.pdf_tex}
%%  instead of
%%   \includegraphics{<filename>.pdf}
%% To scale the image, write
%%   \def\svgwidth{<desired width>}
%%   \input{<filename>.pdf_tex}
%%  instead of
%%   \includegraphics[width=<desired width>]{<filename>.pdf}
%%
%% Images with a different path to the parent latex file can
%% be accessed with the `import' package (which may need to be
%% installed) using
%%   \usepackage{import}
%% in the preamble, and then including the image with
%%   \import{<path to file>}{<filename>.pdf_tex}
%% Alternatively, one can specify
%%   \graphicspath{{<path to file>/}}
%% 
%% For more information, please see info/svg-inkscape on CTAN:
%%   http://tug.ctan.org/tex-archive/info/svg-inkscape
%%
\begingroup%
  \makeatletter%
  \providecommand\color[2][]{%
    \errmessage{(Inkscape) Color is used for the text in Inkscape, but the package 'color.sty' is not loaded}%
    \renewcommand\color[2][]{}%
  }%
  \providecommand\transparent[1]{%
    \errmessage{(Inkscape) Transparency is used (non-zero) for the text in Inkscape, but the package 'transparent.sty' is not loaded}%
    \renewcommand\transparent[1]{}%
  }%
  \providecommand\rotatebox[2]{#2}%
  \newcommand*\fsize{\dimexpr\f@size pt\relax}%
  \newcommand*\lineheight[1]{\fontsize{\fsize}{#1\fsize}\selectfont}%
  \ifx\svgwidth\undefined%
    \setlength{\unitlength}{627.82105549bp}%
    \ifx\svgscale\undefined%
      \relax%
    \else%
      \setlength{\unitlength}{\unitlength * \real{\svgscale}}%
    \fi%
  \else%
    \setlength{\unitlength}{\svgwidth}%
  \fi%
  \global\let\svgwidth\undefined%
  \global\let\svgscale\undefined%
  \makeatother%
  \begin{picture}(1,0.99976398)%
    \lineheight{1}%
    \setlength\tabcolsep{0pt}%
    \put(0,0){\includegraphics[width=\unitlength,page=1]{SepCurves.pdf}}%
    \put(0.08765902,0.72548111){\color[rgb]{0,0,0}\makebox(0,0)[rt]{\lineheight{1.25}\smash{\begin{tabular}[t]{r}$6$\end{tabular}}}}%
    \put(0.50183131,0.96299245){\color[rgb]{0,0,0}\makebox(0,0)[t]{\lineheight{1.25}\smash{\begin{tabular}[t]{c}$1$\end{tabular}}}}%
    \put(0.91208432,0.72548111){\color[rgb]{0,0,0}\makebox(0,0)[lt]{\lineheight{1.25}\smash{\begin{tabular}[t]{l}$2$\end{tabular}}}}%
    \put(0.91208432,0.24819636){\color[rgb]{0,0,0}\makebox(0,0)[lt]{\lineheight{1.25}\smash{\begin{tabular}[t]{l}$3$\end{tabular}}}}%
    \put(0.50183131,0.00842292){\color[rgb]{0,0,0}\makebox(0,0)[t]{\lineheight{1.25}\smash{\begin{tabular}[t]{c}$4$\end{tabular}}}}%
    \put(0.08765902,0.24819636){\color[rgb]{0,0,0}\makebox(0,0)[rt]{\lineheight{1.25}\smash{\begin{tabular}[t]{r}$5$\end{tabular}}}}%
  \end{picture}%
\endgroup%

\caption{The orange and blue curves both encircle boundary components 2 and 6, while jointly blocking boundary component 1 from the rest of the surface.}
\label{fig:AppendixB-1}
\end{figure}

It follows that a maximal $2$-sided join $\mathcal{J}$ corresponds to two infinite collections $\mathcal{V}, \mathcal{U}$ and a partitioning of the $6$ boundary components of $S_{0,6}$ into two subsets $T_{1}, T_{2}$ of three boundary components each such that each vertex in $\mathcal{V}$ encircles two boundary components of $T_{1}$ (and respectively for $u \in \mathcal{U}$ and $T_{2}$). It follows that exactly one component of $S \setminus \left\{\mathcal{V} \cup \mathcal{J}\right\}$ is an essential annulus separating $T_{1}$ and $T_{2}$, and thus there is a unique (up to isotopy) simple closed curve $\delta_{\mathcal{J}}$ determined by $\mathcal{J}$. See Figure~\ref{fig:AppendixB-2}

\begin{figure}[htb!]
\centering
\def\svgwidth{2in}
%% Creator: Inkscape 1.0.2 (e86c8708, 2021-01-15), www.inkscape.org
%% PDF/EPS/PS + LaTeX output extension by Johan Engelen, 2010
%% Accompanies image file '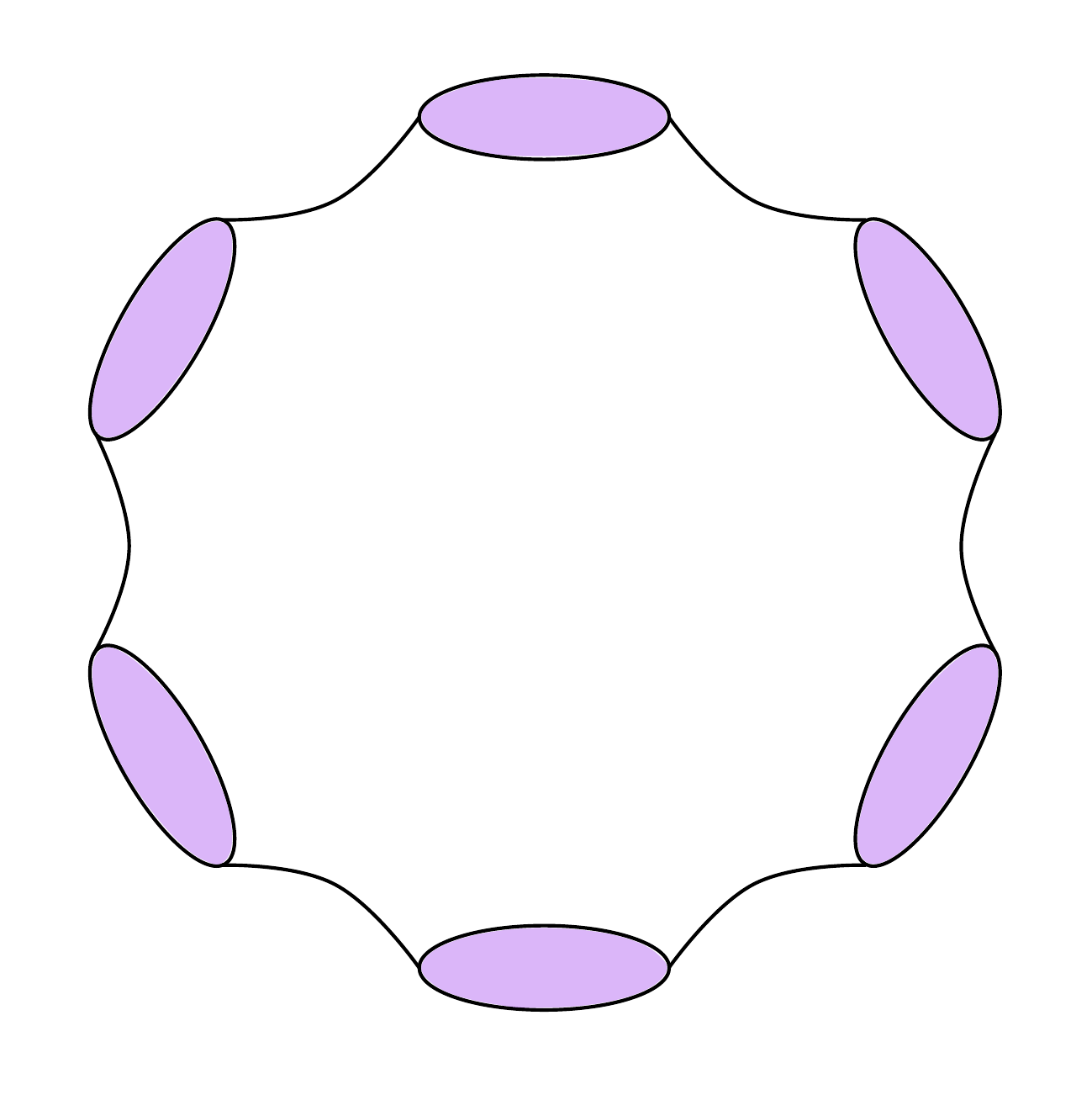' (pdf, eps, ps)
%%
%% To include the image in your LaTeX document, write
%%   \input{<filename>.pdf_tex}
%%  instead of
%%   \includegraphics{<filename>.pdf}
%% To scale the image, write
%%   \def\svgwidth{<desired width>}
%%   \input{<filename>.pdf_tex}
%%  instead of
%%   \includegraphics[width=<desired width>]{<filename>.pdf}
%%
%% Images with a different path to the parent latex file can
%% be accessed with the `import' package (which may need to be
%% installed) using
%%   \usepackage{import}
%% in the preamble, and then including the image with
%%   \import{<path to file>}{<filename>.pdf_tex}
%% Alternatively, one can specify
%%   \graphicspath{{<path to file>/}}
%% 
%% For more information, please see info/svg-inkscape on CTAN:
%%   http://tug.ctan.org/tex-archive/info/svg-inkscape
%%
\begingroup%
  \makeatletter%
  \providecommand\color[2][]{%
    \errmessage{(Inkscape) Color is used for the text in Inkscape, but the package 'color.sty' is not loaded}%
    \renewcommand\color[2][]{}%
  }%
  \providecommand\transparent[1]{%
    \errmessage{(Inkscape) Transparency is used (non-zero) for the text in Inkscape, but the package 'transparent.sty' is not loaded}%
    \renewcommand\transparent[1]{}%
  }%
  \providecommand\rotatebox[2]{#2}%
  \newcommand*\fsize{\dimexpr\f@size pt\relax}%
  \newcommand*\lineheight[1]{\fontsize{\fsize}{#1\fsize}\selectfont}%
  \ifx\svgwidth\undefined%
    \setlength{\unitlength}{623.44117269bp}%
    \ifx\svgscale\undefined%
      \relax%
    \else%
      \setlength{\unitlength}{\unitlength * \real{\svgscale}}%
    \fi%
  \else%
    \setlength{\unitlength}{\svgwidth}%
  \fi%
  \global\let\svgwidth\undefined%
  \global\let\svgscale\undefined%
  \makeatother%
  \begin{picture}(1,1.00678765)%
    \lineheight{1}%
    \setlength\tabcolsep{0pt}%
    \put(0,0){\includegraphics[width=\unitlength,page=1]{SepAnnulus.pdf}}%
    \put(0.08827486,0.73057785){\color[rgb]{0,0,0}\makebox(0,0)[rt]{\lineheight{1.25}\smash{\begin{tabular}[t]{r}$6$\end{tabular}}}}%
    \put(0.49833151,0.96975779){\color[rgb]{0,0,0}\makebox(0,0)[t]{\lineheight{1.25}\smash{\begin{tabular}[t]{c}$1$\end{tabular}}}}%
    \put(0.91146669,0.73057785){\color[rgb]{0,0,0}\makebox(0,0)[lt]{\lineheight{1.25}\smash{\begin{tabular}[t]{l}$2$\end{tabular}}}}%
    \put(0.91146669,0.24994002){\color[rgb]{0,0,0}\makebox(0,0)[lt]{\lineheight{1.25}\smash{\begin{tabular}[t]{l}$3$\end{tabular}}}}%
    \put(0.49833151,0.00848209){\color[rgb]{0,0,0}\makebox(0,0)[t]{\lineheight{1.25}\smash{\begin{tabular}[t]{c}$4$\end{tabular}}}}%
    \put(0.08827486,0.24994002){\color[rgb]{0,0,0}\makebox(0,0)[rt]{\lineheight{1.25}\smash{\begin{tabular}[t]{r}$5$\end{tabular}}}}%
    \put(0,0){\includegraphics[width=\unitlength,page=2]{SepAnnulus.pdf}}%
  \end{picture}%
\endgroup%

\caption{The simple closed curve $\delta_{\mathcal J}$ separating $T_1 = \{1, 2, 3\}$ and $T_2 = \{4, 5, 6\}$ is the core curve of the annulus shown in light blue.}
\label{fig:AppendixB-2}
\end{figure}

Now, let $\Phi$ be an automorphism of the $0$-edge subgraph of $\mathcal{SC}(S_{0,6})$, and let $\alpha$ be a vertex of the curve graph which is not a vertex of $\mathcal{SC}(S_{0,6})$. It follows that $\alpha$ separates $S$ into two subsurfaces that each possess three of the original boundary components. There then corresponds to $\alpha$ a $2$-sided maximal join $\mathcal{J}_{\alpha}= \left\{\mathcal{V}_{\alpha}, \mathcal{U}_{\alpha} \right\}$ of the $0$-edge subgraph of $\mathcal{SC}(S)$: for instance, $\mathcal{V}_{\alpha}$ consists of all vertices of $\mathcal{SC}(S)$ disjoint from $\alpha$ and to the left of $\alpha$. The automorphism $\Phi$ then sends $\mathcal{J}_{\alpha}$ to a maximal $2$-sided join $\Phi(\mathcal{J})$, which, by the above paragraphs, uniquely determines a separating curve which we define to be $\Phi(\alpha)$. 

This extends $\Phi$ to the entire curve graph, and it remains only to check that $\Phi$ is an automorphism. If $v \in \mathcal{SC}(S)$, $\alpha \notin \mathcal{SC}(S)$, and $i(v, \alpha)= 0$, then $i(\Phi(v), \Phi(\alpha))= 0$ since $\Phi(\alpha)$ will be the curve corresponding to a join for which $v$ lives in one of the two vertex sets. If $\alpha, \beta$ are both not in $\mathcal{SC}(S)$, they must intersect. This implies that the corresponding $2$-sided maximal joins $\mathcal{J}_{\alpha}, \mathcal{J}_{\beta}$ are distinct, and that therefore $\Phi(\mathcal{J}_{\alpha}) \neq \Phi(\mathcal{J}_{\beta})$, which in turn implies that $\Phi(\alpha) \neq \Phi(\beta)$ and so these curves must also intersect.

\bibliographystyle{plain}
\bibliography{paper}

\end{document}